\newtheorem{theorem}{Theorem}[section]
\newtheorem{lemma}[theorem]{Lemma}
\newtheorem{remark}[theorem]{Remark}
\newtheorem{proposition}[theorem]{Proposition}
\numberwithin{equation}{section}
\newcommand{\lbl}[1]{\label{#1}}
\newcommand{\be}{\begin{equation}}
\newcommand{\ee}{\end{equation}}
\newcommand\bes{\begin{eqnarray}} \newcommand\ees{\end{eqnarray}}
\newcommand{\bess}{\begin{eqnarray*}}
\newcommand{\eess}{\end{eqnarray*}}
\newcommand{\bbbb}{\left\{\begin{aligned}}
\newcommand{\nnnn}{\end{aligned}\right.}
\newcommand{\bea}{\begin{align*}}
\newcommand{\eea}{\end{align*}}
\newcommand\ep{\varepsilon}
\newcommand\dd{\displaystyle}
\newcommand\df{\dd\frac}
\newcommand\dx{{\rm d}x}
\newcommand\dy{{\rm d}y}
\newcommand\yy{\infty}
\newcommand\R{\mathbb{R}}
\newcommand\sk{\smallskip}
\begin{document}\thispagestyle{empty}
\setlength{\baselineskip}{16pt}

\begin{center}
 {\LARGE\bf Dynamics of nonlocal diffusion problems}\\[2mm]
 {\LARGE\bf  with a free boundary and a fixed boundary\footnote{This work was supported by NSFC Grants
11771110, 11971128}}\\[4mm]
  {\Large Lei Li, \ \  Mingxin Wang\footnote{Corresponding author. {\sl E-mail}: mxwang@hit.edu.cn}}\\[1.5mm]
{School of Mathematics, Harbin Institute of Technology, Harbin 150001, PR China}
\end{center}

\date{\today}

\begin{abstract}
In this paper, we first consider two scalar nonlocal diffusion problems with a free boundary and a fixed boundary. We obtain the global existence, uniqueness and longtime behaviour of solution of these two problems. The spreading-vanishing dichotomy and sharp criteria for spreading and vanishing are established. We also prove that accelerated spreading could happen if and only if a threshold condition is violated by kernel function. Then we discuss a classical Lotka-Volterra predator-prey model with nonlocal diffusions and a free boundary which can be seen as nonlocal diffusion counterpart of the model in the work of Wang (2014 J. Differential Equations \textbf{256}, 3365-3394).

\textbf{Keywords}: Nonlocal diffusion; Free boundary; Spreading-vanishing; Spreading speed; Accelerated spreading.

\textbf{AMS Subject Classification (2000)}: 35K57, 35R09,
35R20, 35R35, 92D25

\end{abstract}

\section{Introduction}
Recently the authors of \cite{CDLL} proposed the following nonlocal diffusion model with free boundaries to study the population dispersal
\bes\left\{\begin{aligned}\label{1.1}
&u_t=d\int_{g(t)}^{h(t)}J(x-y)u(t,y)\dy-du+f(t,x,u), & &t>0,~g(t)<x<h(t),\\
&u(t,x)=0,& &t>0, ~ x\notin(g(t),h(t)),\\
&h'(t)=\mu\int_{g(t)}^{h(t)}\int_{h(t)}^{\infty}
J(x-y)u(t,x)\dy\dx,& &t>0,\\
&g'(t)=-\mu\int_{g(t)}^{h(t)}\int_{-\infty}^{g(t)}
J(x-y)u(t,x)\dy\dx,& &t>0,\\
&h(0)=-g(0)=h_0>0,\;\; u(0,x)=u_0(x),& &|x|\le h_0,
 \end{aligned}\right.
 \ees
 where kernel function $J$ satisfies
 \begin{enumerate}[leftmargin=4em]
\item[{\bf(J)}]$J\in C(\mathbb{R})\cap L^{\yy}(\mathbb{R})$, $J\ge 0$, $J(0)>0,~\dd\int_{\mathbb{R}}J(x)\dx=1$, \ $J$\; is even, \ $\dd\sup_{x\in\R}J(x)<\infty$,
 \end{enumerate}
 and the growth term $f$ satisfies
  \begin{enumerate}
  \item[{\bf(F1)}]\, {\rm(i)}\, $f\in C(\overline{\mathbb{R}}^+\times\mathbb{R}\times\overline{\mathbb{R}}^+)$, $f(t,x,0)\equiv 0$, and $f(t,x,u)$ is locally Lipschitz continuous in $u\in\overline{\mathbb{R}}^+$, i.e., for any given $M, T>0$, there exists a constant $L(M,T)>0$ such that
 \[|f(t,x,u_1)-f(t,x,u_2)|\le L(M,T)|u_1-u_2|\]
for all $u_1, u_2\in [0, M]$, $t\in [0, T]$ and all $x\in \R$;
\item[{\rm(ii)}]\, there exists $K>0$ such that for all $(t,x)\in \overline{\mathbb{R}}^+\times \R$, $f(t,x,u)<0$ when $u>K$.
 \end{enumerate}

 This model is mainly derived from two aspects: dispersal term and free boundary condition.
 As in \cite{HMV,AMRT}, we assume that $u(t,x)$ is the population density at point $x$ and time $t$, and $J(x-y)$ is the probability distribution function of jumping from location $y$ to $x$. Thus $J(x-y)u(t,y)$ is the rate at which individuals reach $x$ from position $y$, and by integrating over the entire survival area of the species we see that $\int_{g(t)}^{h(t)}J(x-y)u(t,y)\dy$ is the rate at which individuals arrive at $x$ from all other locations.  Similarly, $J(x-y)u(t,x)$ is the rate at which individuals leave location $x$ to jump to site $y$, and $u(t,x)=\int_{-\yy}^{\yy}J(x-y)u(t,x)\dy$ is the rate at which the individuals depart from location $x$ to go to all other positions. Taking $d$ as dispersal coefficient, we get the dispersal term
  \bess
 d\left(\int_{g(t)}^{h(t)}J(x-y)u(t,y)\dy-u(t,x)\right).
 \eess

 For the meaning of free boundary condition, similar to the corresponding local diffusion model \cite{DL2010,BDK}, we assume that the expanding rate of free boundary is proportional to the outward flux at the boundary. Please refer to \cite{CDLL} for more related details.

Based on the above analysis, let us now introduce our first model. Suppose that the initial habitat of the species is the spatial domain $[0,h_0)$. It is well-known to us that the species will instinctively expand their own habitat for the sake of their survival. We assume that the species can move to the area $(-\yy,0)$, but once they enter this area, they will die immediately, which means that this region is highly lethal to the species. Thus the species can only enlarge their habitat through the right boundary, and our model can be formulated by the following problem
 \bes\label{1.2}
\left\{\begin{aligned}
&u_t=d\dd\int_{0}^{h(t)}J(x-y)u(t,y)\dy-du+f(t,x,u), && t>0,~0\le x<h(t),\\
&u(t,h(t))=0,&& t>0,\\
&h'(t)=\mu\dd\int_{0}^{h(t)}\int_{h(t)}^{\infty}
J(x-y)u(t,x)\dy\dx,&& t>0,\\
&h(0)=h_0,\;\; u(0,x)=u_0(x),&& x\in[0,h_0],
\end{aligned}\right.
 \ees
 where $d,\mu$ and $h_0$ are positive constants, kernel function $J$ satisfies {\bf(J)}, and $u_0(x)$ satisfies
  \begin{enumerate}[leftmargin=4em]
\item[{\bf(H)}]$u_0\in C([0,h_0])$, \ $u_0>0$ in $[0,h_0)$, \ $u_0(h_0)=0$.
 \end{enumerate}

 Unlike problem \eqref{1.2}, in our second model we assume that the species know that the area $(-\yy,0)$ is seriously fatal to them, so they will not jump to this area. This indicates that the condition imposed at $x=0$ is analogous to the usual homogeneous Neumann boundary condition. Hence we can derive the following model
 \bes\label{1.3}
\left\{\begin{aligned}
&u_t=d\dd\int_{0}^{h(t)}J(x-y)u(t,y)\dy-d\dd\left(\int_{0}^{\yy}J(x-y)\dy\right) u+f(t,x,u), && t>0,~0\le x<h(t),\\
&u(t,h(t))=0, && t>0,\\
&h'(t)=\mu\dd\int_{0}^{h(t)}\int_{h(t)}^{\infty}
J(x-y)u(t,x)\dy\dx, && t>0,\\
&h(0)=h_0,\;\; u(0,x)=u_0(x), &&x\in[0,h_0],
\end{aligned}\right.
 \ees
 where $J$ and $u_0$ satisfy {\bf(J)} and {\bf(H)}, respectively.

 At last, we take the classical Lotka-Volterra predator-prey model as an example to study the interaction between two species, namely, we consider the following problem
 \bes\label{1.4}
\left\{\begin{aligned}
&u_{1t}=d_1\dd\int_{0}^{h(t)}\!\!J_1(x-y)u_1(t,y)\dy-d_1u_1+u_1(a_1-b_1u_1-c_1u_2), &&t>0,~0\le x<h(t),\\
&u_{2t}=d_2\dd\int_{0}^{h(t)}\!\!J_2(x-y)u_2(t,y)\dy-d_2u_2+u_2(a_2-b_2u_2+c_2u_1), &&t>0,~0\le x<h(t),\\
&u_i(t,h(t))=0, &&t> 0,\\
&h'(t)=\sum_{i=1}^2\dd\mu_i \int_{0}^{h(t)}\!\int_{h(t)}^{\infty}
J_i(x-y)u_i(t,x)\dy\dx, &&t> 0,\\
&h(0)=h_{0},\;\; u_i(0,x)=u_{i0}(x), &&0\le x\le h_0, ~ i=1,\,2,
\end{aligned}\right.
 \ees
where all parameters are positive, $J_i$ satisfy the condition {\bf(J)}, and $u_{i0}$ meet the condition {\bf(H)}.

For these nonlocal diffusion problems, there are two main differences from the corresponding local diffusion free boundary problems \cite{DL2010,Wjde}. Firstly, there is usually no regularity effect. The lack of regularity makes it difficult to derive some important uniform estimates which are crucial to study the dynamics for these models. Secondly, it is easily seen from the above equations that the change rate of species density at $(t,x)$ is affected not only by the density of species at $(t,x)$, but also by the value near site $x$, which leads to some difficulties and differences from local diffusion model when considering the longtime behaviour of solution. It is worthy mentioning that to overcome difficulties caused by the above differences, some new techniques are recently introduced in \cite {DWZ}.

Before starting our research, we now give a brief introduction for the recent works on nonlocal diffusion free boundary problems. By considering a semi-wave problem, Du et.al   \cite{DLZ} proved that when spreading happens, problem \eqref{1.1} has a finite spreading speed if and only if $J$ satisfies
  \begin{enumerate}[leftmargin=4em]
\item[{\bf(J1)}]$\dd\int_{0}^{\yy}xJ(x)\dx<\yy$.
 \end{enumerate}
Then Du and Ni \cite{DN21} gave comprehensive and delicate results on spreading speed by discussing the conditions satisfied by $J$. In \cite{DN200}, the authors showed that the local diffusion free boundary problem in one dimension space can be approximated by a slightly modified version of nonlocal diffusion free boundary problem in \cite{CDLL}. In addition, there are many other related works for nonlocal diffusion free boundary problems, such as \cite{DWZ,DN20} for two species models, \cite{DN213} for high dimension and radial symmetry version, \cite{WW1,WW2,LSW} for systems with nonlocal and local diffusions, \cite{LWW20} for two species models with different free boundaries, \cite{ZLZ} for logistic model in time periodic environment and \cite{ZLD,ZZLD} for the epidemic model with partial degenerate diffusion.

This paper is arranged as follows. In Section 2, some preparatory results are given, such as  eigenvalue problem, steady state problems on half space and a technical lemma; Section 3 is devoted to dynamics of the model \eqref{1.2}, and we prove the well-posedness, spreading-vanishing dichotomy and spreading or vanishing criteria, which also are obtained for the model \eqref{1.3} in Section 4. Moreover, the spreading speed for model \eqref{1.2} is discussed in Section 3. More precisely, for the model \eqref{1.2}, we prove that its spreading speed has lower and upper bounds if {\bf(J1)} holds, and accelerated spreading happens if {\bf(J1)} is violated. As for the model \eqref{1.3}, we show that it has a finite spreading speed if and only if {\bf(J1)} holds in Section 4; Section 5 is concerned with two species model \eqref{1.4}. The well-posedness, longtime behavior and the estimates of spreading speed are given. At last, a brief discussion is stated.

In this paper, we assume that the function $J$ satisfies the condition {\bf(J)}, and $0<T<\yy$.

\section{Some preparatory results}
\setcounter{equation}{0} {\setlength\arraycolsep{2pt}

In order to save space, we let $\mathbb{R}^+=(0,\yy)$ and $\overline{\mathbb{R}}^+=[0,\yy)$, and define
 \bess
 \mathbb H_{h_{0}}^T&=&\left\{h\in C([0,T]):\, h(0)=h_{0},
\; \inf_{0\le t_1<t_2\le T}\df{h(t_2)-h(t_1)}{t_2-t_1}>0\right\},
 \eess
and for $h\in\mathbb H_{h_{0}}^T$, we define
 \bess
 &D^T_{h}=\left\{(t,x):\, 0<t\leq T,~0\le x<h(t)\right\},\;\;\; D^\yy_{h}=\left\{(t,x):\, 0\le t<\yy,~0\le x\le h(t)\right\}
 \eess
and $\overline{D}^T_{h}$ is the closure of $D^T_h$.
In this section we first study the maximum principle and comparison principle, and then investigate the corresponding eigenvalue problem to \eqref{1.3}. Finally, we discuss the steady state problems corresponding to problems \eqref{1.2}, \eqref{1.3} and \eqref{1.4} respectively.

\subsection{The maximum principle and comparison principle}

 \begin{lemma}[Maximum principle]\label{l2.1}
Let $h\in\mathbb H^T_{h_0}$, $c\in L^\infty (D^T_h)$ and $\psi,\,\psi_t\in C(\overline{D}_h^T)$ satisfy
 \bess\left\{\begin{aligned}
&\psi_t\ge d\int_{0}^{h(t)}J(x-y)\psi(t,y)\dy+c\psi, \;\;\;\;\; 0<t\le T,\ 0\le x<h(t),\\
&\psi(t, h(t)) \geq 0,\;\;\;\;\;0<t\le T;\;\;\;\ \ \psi(0,x)\ge0,\;\;\;\;\;0\le x\le h_0.
 \end{aligned}\right.\eess
Then $\psi\ge0$ in $\overline{D}_h^T$. Moreover, if $\psi(0,x)\not\equiv0$ in $[0, h_0]$, then $\psi>0$ in $D_h^T$.
\end{lemma}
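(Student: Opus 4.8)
The plan is to remove the zeroth-order coefficient by the substitution $w=e^{-kt}\psi$ with $k>d+\|c\|_{L^\infty(D^T_h)}$, which turns the hypothesis into $w_t\ge d\int_0^{h(t)}J(x-y)w(t,y)\,dy+(c-k)w$, and then to argue by contradiction that the (attained) minimum of $w$ cannot be negative. Since $\psi,\psi_t\in C(\overline{D}^T_h)$ and $\overline{D}^T_h$ is compact, $w$ attains its minimum $m=w(t_0,x_0)$ on $\overline{D}^T_h$. On the parabolic boundary, that is at $t_0=0$ or at $x_0=h(t_0)$, the hypotheses $\psi(0,\cdot)\ge0$ and $\psi(t,h(t))\ge0$ force $m\ge0$; note that $x_0=0$ is \emph{not} a boundary case, since the differential inequality is assumed on the whole range $0\le x<h(t)$ including $x=0$ (no condition is imposed at the fixed boundary).

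Suppose $m<0$. Then the minimum is attained at a point with $t_0\in(0,T]$ and $0\le x_0<h(t_0)$, where the inequality holds. Because $t\mapsto w(t,x_0)$ has a minimum at $t_0$, we get $w_t(t_0,x_0)\le0$ (an interior critical point if $t_0<T$, a one-sided minimum if $t_0=T$). Evaluating the inequality at $(t_0,x_0)$ and using $w(t_0,y)\ge m$ together with $J\ge0$ and $\int_0^{h(t_0)}J(x_0-y)\,dy\le\int_{\mathbb R}J=1$, I bound the nonlocal integral below by $mI$, where $I:=\int_0^{h(t_0)}J(x_0-y)\,dy\in[0,1]$, and obtain $0\ge w_t(t_0,x_0)\ge m\,[\,dI+c(t_0,x_0)-k\,]$. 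Since $dI+c(t_0,x_0)-k\le d+\|c\|_\infty-k<0$ and $m<0$, the right-hand side is strictly positive, a contradiction. Hence $m\ge0$, so $w\ge0$ and $\psi\ge0$ throughout $\overline{D}^T_h$.

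For the strong statement I would exploit the sign just obtained. Fixing $x$ and writing $C(t,x)=\int_0^t c(s,x)\,ds$ (Lipschitz in $t$ as $c\in L^\infty$), the inequality $\psi_t-c\psi\ge d\int_0^{h(t)}J(x-y)\psi(t,y)\,dy\ge0$ shows $t\mapsto e^{-C(t,x)}\psi(t,x)$ is nondecreasing; integrating yields the Duhamel-type lower bound $\psi(t,x)\ge e^{C(t,x)}\psi(0,x)+d\,e^{C(t,x)}\int_0^t e^{-C(s,x)}\!\int_0^{h(s)}J(x-y)\psi(s,y)\,dy\,ds$, with both terms nonnegative. If $\psi(t_0,x_0)=0$ at some $(t_0,x_0)\in D^T_h$, the first term forces $\psi(0,x_0)=0$ and the second (a nonnegative continuous integrand of zero integral) forces $\int_0^{h(s)}J(x_0-y)\psi(s,y)\,dy=0$ for every $s\in[0,t_0]$. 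Taking $s=t_0$ and using $J(0)>0$ with continuity of $J$, so $J(x_0-y)>0$ for $y$ near $x_0$, I get $\psi(t_0,\cdot)\equiv0$ near $x_0$. Thus the zero set $\{x\in[0,h(t_0)):\psi(t_0,x)=0\}$ is open; it is also closed, and by connectedness of $[0,h(t_0))$ it is the whole interval. Applying the Duhamel bound at each $x_0\in[0,h_0)$, where now $\psi(t_0,x_0)=0$, gives $\psi(0,x_0)=0$, so $\psi(0,\cdot)\equiv0$ on $[0,h_0]$, contradicting $\psi(0,\cdot)\not\equiv0$. Therefore $\psi>0$ in $D^T_h$.

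The main obstacle is the absence of any regularising effect: unlike the local case one cannot invoke a parabolic strong maximum principle or Hopf lemma. The two delicate points are (i) controlling the sign of the nonlocal integral in the first part, which the shift $k>d+\|c\|_\infty$ neutralises precisely through the normalisation $\int_0^{h(t)}J\le1$; and (ii) propagating the vanishing of $\psi$ in space in the second part, which rests essentially on $J(0)>0$ (hence $J>0$ near the origin) rather than on smoothing, combined with the elementary time-integrating-factor representation to carry the information back to $t=0$.
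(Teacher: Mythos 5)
Your proof of $\psi\ge0$ is essentially identical to the paper's: the same exponential substitution with $k>d+\|c\|_{\infty}$ and the same contradiction at a negative minimum, using $J\ge0$ and $\int_0^{h(t)}J(x-y)\,{\rm d}y\le1$. For the strict positivity the paper simply cites \cite[Lemma 2.2]{CDLL}, and your integrating-factor/connectedness argument is the standard way to fill that in; the only point to tidy is that for $x_0\in[h_0,h(t_0))$ the time integration must start at the instant when $h(t)=x_0$ rather than at $t=0$, since $\psi(0,x_0)$ is not defined there.
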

\begin{proof}Let $\Psi=\psi{\rm e}^{-kt}$ with constant $k>\|c(t,x)\|_{L^{\infty}(D_h^T)}+d$. Assume on the contrary that there exists $(t^*,x^*)\in (0,T]\times [0,h(t))$ such that $\Psi(t^*,x^*)=\min_{\overline{D}_h^T}\Psi<0$. Then $\Psi_t(t^*,x^*)\le0$, and
\[d\int_{0}^{h(t^*)}J(x^*-y)\Psi(t^*,y)\dy-d\int_{0}^{h(t^*)}J(x^*-y)\dy\Psi(t^*,x^*)\ge 0.\]
Therefore,
 \bess
0&\ge& \Psi_t(t^*,x^*)-d\int_{0}^{h(t^*)}J(x^*-y)\Psi(t^*,y)\dy+d\int_{0}^{h(t^*)}J(x^*-y)\dy\Psi(t^*,x^*)\\
&\ge&d\int_{0}^{h(t^*)}J(x^*-y)\dy\Psi(t^*,x^*)+c(t^*,x^*)\Psi(t^*,x^*)-k\Psi(t^*,x^*)>0.
 \eess
This is a contradiction, and so $\psi\ge0$ in $\overline{D}_h^T$. If $\psi(0,x)\not\equiv0$ in $[0, h_0]$, we can prove that $\psi>0$ in $D^T$ by similar arguments in the proof of \cite[Lemma 2.2]{CDLL}. The details are omitted here.
\end{proof}

\begin{lemma}[Maximum principle]\label{l2.2}
Let $v,c\in L^{\yy}([0,T]\times\mathbb{R}^+)$, $v, v_t\in C([0,T]\times\overline{\mathbb{R}}^+)$ and satisfy
 \bess\left\{\begin{aligned}
&v_t\ge d\int_{0}^{\yy}J(x-y)v(t,y)\dy+cv, && 0<t\le T,\ x\ge0,\\
&v(0,x)\ge0,  && x\ge0.
 \end{aligned}\right.\eess
Then $v\ge0$ in $[0,T]\times\overline{\mathbb{R}}^+$. Moreover, if $v(0,x)\not\equiv0$ in $\overline{\mathbb{R}}^+$, then $v>0$ in $(0,T]\times\overline{\mathbb{R}}^+$.
\end{lemma}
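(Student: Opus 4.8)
The plan is to mirror the substitution used for Lemma~\ref{l2.1}, then to confront the one genuinely new feature: the spatial domain $\overline{\mathbb{R}}^+$ is not compact, so the would-be minimum of the auxiliary function need not be attained and the pointwise argument at a minimum cannot be copied verbatim. As before I would set $\Psi=v\,{\rm e}^{-kt}$ with $k>\|c\|_{L^\infty}+d$, which turns the hypothesis into $\Psi_t\ge d\int_0^\infty J(x-y)\Psi(t,y)\,{\rm d}y+(c-k)\Psi$ with the crucial sign property $c-k<-d$, together with $\Psi(0,\cdot)\ge0$ and $\Psi$ bounded (since $v\in L^\infty$).

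Because I cannot localize a minimum in $x$, I would replace the pointwise maximum-principle step by an integrated, Duhamel-type estimate. Writing $\beta:=k-c\ge\beta_0:=k-\|c\|_{L^\infty}>d>0$ and using the integrating factor ${\rm e}^{\int_0^t\beta(\tau,x)\,{\rm d}\tau}$ (legitimate because, for each fixed $x$, $t\mapsto\Psi(t,x)$ is $C^1$ and $c$ is bounded), integration in $t$ yields, for every $x\ge0$ and $0<t\le T$,
\[
\Psi(t,x)\ge \Psi(0,x)\,{\rm e}^{-\int_0^t\beta}+d\int_0^t{\rm e}^{-\int_s^t\beta}\!\int_0^\infty\! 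J(x-y)\Psi(s,y)\,{\rm d}y\,{\rm d}s.
\]
I would then run a Gronwall-type argument on the running infimum $m(t):=\inf_{0\le s\le t,\,x\ge0}\Psi(s,x)$, which is finite, non-increasing, and satisfies $m(0)\ge0$. Assuming for contradiction that $m(T)<0$, the bounds $\Psi(s,y)\ge m(s)\ge m(t)$ for $s\le t$ and $\int_0^\infty J(x-y)\,{\rm d}y\le1$ give $\int_0^\infty J(x-y)\Psi(s,y)\,{\rm d}y\ge m(t)$; inserting this, dropping the non-negative term $\Psi(0,x){\rm e}^{-\int_0^t\beta}$, and using $\int_0^t{\rm e}^{-\int_s^t\beta}\,{\rm d}s\le1/\beta_0$ yields, for each $t$ with $m(t)<0$, the bound $\Psi(t,x)\ge\theta\,m(t)$ for all $x$, where $\theta:=d/\beta_0\in(0,1)$; hence $\inf_{x}\Psi(t,x)\ge\theta\,m(t)$. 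Since $m$ is non-increasing this gives $\inf_x\Psi(s,x)\ge\theta\,m(s)\ge\theta\,m(T)$ for every $s\le T$ (the case $m(s)\ge0$ being trivial because $\theta\,m(T)<0$), whence taking the infimum over $s$ gives $m(T)\ge\theta\,m(T)$, i.e. $(1-\theta)m(T)\ge0$, a contradiction. Thus $m(T)\ge0$, that is $\Psi\ge0$ and so $v\ge0$ on $[0,T]\times\overline{\mathbb{R}}^+$.

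For the strict positivity when $v(0,\cdot)\not\equiv0$, I would reuse the displayed representation, now with $\Psi\ge0$ so that its second term is non-negative. At any point where $\Psi(0,x)>0$ one obtains $\Psi(t,x)>0$ immediately; for the remaining $x$ I would propagate positivity through the nonlocal term, using $J(0)>0$ (hence $J>0$ on a neighbourhood of $0$ by continuity) to show that whenever $\Psi(s,\cdot)>0$ on an interval then $\int_0^\infty J(x-y)\Psi(s,y)\,{\rm d}y>0$ for all $x$ within reach, and iterating to cover all of $\overline{\mathbb{R}}^+$ for $t>0$. This is precisely the mechanism of \cite[Lemma 2.2]{CDLL} that is invoked for Lemma~\ref{l2.1}, so I would simply adapt that argument here.

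The main obstacle, and the only place where the proof must depart from Lemma~\ref{l2.1}, is the loss of compactness in $x$: a naive perturbation $\Psi+\eta\phi$ with $\phi(x)\to\infty$ fails, because controlling the resulting term $\int_0^\infty J(x-y)(\phi(x)-\phi(y))\,{\rm d}y$ would require moment bounds on $J$ that are not assumed in {\bf(J)}. The integrated, running-infimum route above is designed precisely to sidestep this difficulty: it never needs the infimum to be attained and uses only $\int_{\mathbb{R}}J=1$ together with the boundedness of $v$ and $c$.
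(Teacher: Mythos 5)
Your argument is correct, and for the main assertion ($v\ge 0$) it takes a genuinely different route from the paper. The paper follows Du--Ni: it perturbs to $w=v+\varepsilon e^{At}$ with $A>d+1+\|c\|_\infty$, defines the first time $T_0$ at which positivity could fail, and, because the infimum $\inf_x w(T_0,x)=0$ need not be attained, invokes Ekeland's variational principle to manufacture approximate minimizers $(t_n,y_n)$ at which $w_t(t_n,y_n)\to 0$ from the variational inequality while $w_t(t_n,y_n)\ge \tfrac12\varepsilon e^{At_n}$ from the equation --- a contradiction; then $\varepsilon\to 0$. You instead integrate the inequality for $\Psi=ve^{-kt}$ against the integrating factor $e^{\int_0^t\beta}$ (with $\beta=k-c\ge \beta_0>d$) and run a contraction estimate on the running infimum $m(t)$, the whole point being that the constant $\theta=d/\beta_0$ produced by $\int_0^\infty J(x-y)\,dy\le 1$ and $\int_0^t e^{-\int_s^t\beta}\,ds\le 1/\beta_0$ is strictly less than $1$; the inequality $m(T)\ge\theta m(T)$ then forces $m(T)\ge 0$ without ever locating a minimizer. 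Both devices are there precisely to cope with the non-compactness of $\overline{\mathbb{R}}^+$, but yours is more elementary and self-contained (no Ekeland, no $\varepsilon$-perturbation and limit), at the modest cost of the Duhamel bookkeeping; one small point worth a sentence in a write-up is that since $c$ is only $L^\infty$ in $t$, the function $\Psi e^{\int_0^t\beta}$ is Lipschitz rather than $C^1$, so the integrated form of the differential inequality should be justified via absolute continuity. For the strict positivity when $v(0,\cdot)\not\equiv 0$, you propagate positivity through the nonlocal term using $J(0)>0$, which is exactly the mechanism of the result in \cite{CDLL} that the paper simply cites, so the two treatments coincide there.
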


\begin{proof} Here we adopt an ingenious method introduced by Du and Ni \cite[Lemma 2.3]{DN21} to prove our conclusion. For any $\ep>0$, let $w=v+\ep {\rm e}^{At}$ with constant $A>d+1+\|c\|_{\yy}$. We prove
  \bes \label{2.1}w\ge0 \; \;{\rm ~ in ~ } \; [0,T]\times \overline{\mathbb{R}}^+.
  \ees
Define $T_0=\sup\big\{s\in (0,T]: w>0 \;{\rm ~ in} \; [0,s]\times \overline{\mathbb{R}}^+\big\}$. Since we have, for $(t,x)\in(0,T]\times\overline{\mathbb{R}}^+$,
  \bess
w_t-d\int_{0}^{\yy}J(x-y)w(t,y)\dy-cw&\ge& \big(A-c-d\int_{0}^{\yy}J(x-y)\dy\big)\ep {\rm e}^{At}>\ep {\rm e}^{At},
\eess
$w_t$ has a finite lower bound independent of $x\in\overline{\mathbb{R}}^+$. This together with the fact that $w(0,x)\ge\ep$ for $x\in\overline{\mathbb{R}}^+$ indicates that $T_0>0$ is well defined. If $T_0=T$, then \eqref{2.1} holds. Otherwise we have $T_0<T$ and $\inf_{x\in\overline{\mathbb{R}}^+}w(T_0,x)=0$ since if $\inf_{x\in\overline{\mathbb{R}}^+}w(T_0,x)>0$, because $w_t(T_0,x)$ has a finite lower bound independent of $x\in\overline{\mathbb{R}}^+$, we can find some $0<\delta\ll1$ such that $w(t,x)>0$ for $(t,x)\in [T_0,T_0+\delta]\times \overline{\mathbb{R}}^+$ which is a contradiction to the definition of $T_0$. Thus for any $0<\ep_n\ll1$ with $\ep_n\to 0$ as $n\to\yy$, there exist $x_n\in\overline{\mathbb{R}}^+$ such that $w(T_0,x_n)<\ep_n$.

Since $w$ is bounded below and continuous in $[0,T_0]\times \overline{\mathbb{R}}^+$, by Ekeland's variational principle, for these $\ep_n$, $(T_0,x_n)$ and $\sigma=\min\{T_0/2,1\}$, there exist $(t_n,y_n)\in[0,T_0]\times \overline{\mathbb{R}}^+$ such that
\[\begin{cases}
  w(t_n,y_n)\le w(T_0,x_n)<\ep_n, \;\; |T_0-t_n|+|x_n-y_n|<\sigma, \\[1mm]
  w(t_n,y_n)-w(t,y_n)\le \frac{|t_n-t|\ep_n}{\sigma} \;\;\; \mbox{for} ~ t\in[0,T_0].
\end{cases}\]
Due to the choice of $\sigma$, we have $t_n>0$, and thus
\bes\label{2.2}
w_t(t_n,y_n)\le\ep_n/\sigma\to0 \;\; {\rm ~ as} \;\; n\to\yy.
\ees Moreover, by $w\ge0$ in $[0,T_0]\times \overline{\mathbb{R}}^+$, we have
\bess
w_t(t_n,y_n)\ge c(t_n,y_n)w(t_n,y_n)+\ep {\rm e}^{At_n}
\ge-\|c\|_{\yy}\ep_n+\ep {\rm e}^{At_n}\ge \frac{1}{2}\ep ~ ~ {\rm for ~ all ~ large ~} n,
\eess
which contradicts to \eqref{2.2}. So \eqref{2.1} holds for any $0<\ep\ll1$. By letting $\ep\to0$ we derive $v\ge0$ in $[0,T]\times\overline{\mathbb{R}}^+$. If $v(0,x)\not\equiv0$, we can easily get the desired result by \cite[Lemma 3.3]{CDLL}.
\end{proof}

By the maximum principles obtained above we can get the standard comparison principles.

\subsection{Eigenvalue problem corresponding to \eqref{1.3}}

In the following, we discuss the principal eigenvalue of the following eigenvalue problem
\bes\label{2.3}
(\mathcal{L}^N_{(0,\,l)}+a_0) \phi:=d\int_{0}^{l}J(x-y)\phi(y)\dy-dj(x)\phi+a_0\phi=\lambda\phi ~ ~ \mbox{in}\;\;[0,l],
\ees
where $j(x)=\int_{0}^{\yy}J(x-y)\dy$ and $a_0,l$ are positive constants. It follows from {\bf(J)} that $j(x)$ is nondecreasing and continuously differentiable in $x\in\overline{\mathbb{R}}^+$, $j(0)=1/2$ and $j(x)>0$ for $x>0$. As
   \[\lim_{x\to0^+}\frac{x}{j(x)-1/2}=\frac{1}{J(0)}>0, \]
it yields that
\[\int_{0}^{l}\frac{1}{j(x)-1/2}\dx=\yy ~ ~ {\rm for ~ all ~ } l>0.\]
In view of \cite[Theorem 2.1]{HCV}, problem \eqref{2.3} has a principal eigenvalue $\lambda_p(\mathcal{L}^N_{(0,\,l)}+a_0)$ defined by
 \bes
\lambda_p(\mathcal{L}^N_{(0,\,l)}+a_0):=\inf\{\lambda\in\mathbb{R}: (\mathcal{L}^N_{(0,\,l)}+a_0) \phi\le\lambda\phi \mbox{ in $[0, l]$ for some } 0<\phi\in C([0, l])\}.\lbl{2.x}\ees

\begin{lemma}\label{l2.3}
  Assume that $l>0$. Then the followings hold true:

 \sk{\rm(1)}\, $\lambda_p(\mathcal{L}^N_{(0,\,l)}+a_0)$ is strictly increasing and continuous in $l$;

 \sk{\rm(2)}\, $\lim_{l\to\yy}\lambda_p(\mathcal{L}^N_{(0,\,l)}+a_0)=a_0$;

  \sk{\rm(3)}\, $\lim_{l\to0}\lambda_p(\mathcal{L}^N_{(0,\,l)}+a_0)=a_0-d/2$.
\end{lemma}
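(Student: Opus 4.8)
The plan is to exploit the fact that, because $J$ is even, the operator $\mathcal{L}^N_{(0,l)}+a_0$ is self-adjoint on $L^2([0,l])$: its integral part has the symmetric kernel $dJ(x-y)$ and the rest is multiplication by the real function $a_0-dj(x)$. Combining this with the existence of the principal eigenvalue from \cite[Theorem 2.1]{HCV} and the inf-characterization \qq{2.x}, self-adjointness yields the Rayleigh formula
\[
\lambda_p(\mathcal{L}^N_{(0,l)}+a_0)=\sup_{0\ne\phi\in L^2([0,l])}\frac{d\int_0^l\int_0^l J(x-y)\phi(x)\phi(y)\,\dx\dy+\int_0^l(a_0-dj(x))\phi^2\,\dx}{\int_0^l\phi^2\,\dx},
\]
the supremum being attained by the strictly positive principal eigenfunction. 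I write $\lambda_p(l)$ for this number and $\mathcal{R}_l[\phi]$ for the quotient; the key structural remark is that $j(x)=\int_0^\infty J(x-y)\,\dy$ does not depend on $l$, so most of the proof reduces to choosing good test functions.

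For the monotonicity in (1), given $l_1<l_2$ I extend the eigenfunction for $l_1$ by zero to $[0,l_2]$; since it vanishes on $(l_1,l_2]$ and $j$ is $l$-independent, its Rayleigh quotient on $[0,l_2]$ equals $\lambda_p(l_1)$, whence $\lambda_p(l_2)\ge\lambda_p(l_1)$. Strictness follows because equality would force this zero-extension to be a maximizer, hence a principal eigenfunction for $l_2$, contradicting the strict positivity of such eigenfunctions (equivalently, restricting the eigenfunction for $l_2$ to $[0,l_1]$ gives a strict positive supersolution at level $\lambda_p(l_2)$, so \qq{2.x} forces $\lambda_p(l_1)<\lambda_p(l_2)$). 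For lower semicontinuity I use that functions $\phi$ with $\operatorname{supp}\phi\subset(0,l_0)$ are dense in $L^2$ and that $\mathcal{R}_{l_0}[\phi]$ is $L^2$-continuous; choosing such a near-optimal $\phi$ for $l_0$, its Rayleigh quotient is \emph{unchanged} for every $l$ with $l>\sup\operatorname{supp}\phi$, so $\liminf_{l\to l_0}\lambda_p(l)\ge\lambda_p(l_0)$. Together with monotonicity this already gives left-continuity.

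The two limits require only sharp test functions. The bound $\lambda_p(l)\le a_0$ comes from symmetrizing the double integral, $d\int\int J\phi(x)\phi(y)\le d\int_0^l(\int_0^l J(x-y)\,\dy)\phi^2\le d\int_0^l j(x)\phi^2$, which makes the numerator $\le a_0\int\phi^2$; since $\lambda_p(l)$ is increasing and bounded above by $a_0$, the limit in (2) exists and is $\le a_0$. To reach $a_0$ I test with a function equal to $1$ on a long interval $[\alpha,\alpha+M]$ placed far out where $j\approx1$; using $\int_{|z|>R}J<\ep$ for a finite $R$ (possible since $\int_\R J=1$), the bulk contributes $\approx a_0M$ to the numerator while the two endpoint layers are only $O(R)$, so the quotient exceeds $a_0-C\ep$ once $M$ is large, giving (2). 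For (3) I instead use, for $x,y\in[0,l]$, the bound $d\int\int J\phi(x)\phi(y)\le d(\sup_\R J)(\int_0^l\phi)^2\le d(\sup_\R J)\,l\int_0^l\phi^2$, so the integral term contributes at most $d(\sup_\R J)\,l$ to every quotient, while the local term, a weighted average of $a_0-dj(x)$, lies in $[a_0-dj(l),\,a_0-d/2]$ because $j$ is increasing with $j(0)=1/2$. This sandwiches $a_0-dj(l)\le\lambda_p(l)\le a_0-d/2+d(\sup_\R J)\,l$, and letting $l\to0$ with $j(l)\to1/2$ yields (3).

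The main obstacle is the right-continuity in (1) (equivalently upper semicontinuity), where the domain $[0,l]$ grows as $l\downarrow l_0$ and the Rayleigh supremum cannot be bounded above by merely restricting test functions. I plan to handle it through the self-adjoint pairing identity: restricting the eigenfunction $\phi_l$ on $[0,l]$ to $[0,l_0]$ and testing against the eigenfunction $\phi_{l_0}$ gives
\[
\lambda_p(l)-\lambda_p(l_0)=\frac{d\int_0^{l_0}\int_{l_0}^l J(x-y)\phi_l(y)\phi_{l_0}(x)\,\dy\dx}{\int_0^{l_0}\phi_l(x)\phi_{l_0}(x)\,\dx},
\]
so right-continuity reduces to showing the numerator tends to $0$ as $l\downarrow l_0$ (the $y$-integration runs over the shrinking slab $[l_0,l]$) while the denominator stays bounded away from $0$. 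This needs uniform-in-$l$ control of $\phi_l$ near $x=l_0$, which I expect to extract from the eigenvalue equation together with the non-degeneracy encoded in the hypothesis $\int_0^l(j(x)-1/2)^{-1}\,\dx=\infty$; this is the one place where genuine estimates, rather than test-function bookkeeping, are needed.
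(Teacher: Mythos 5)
Your overall framework --- the variational (Rayleigh--quotient) characterization of $\lambda_p(\mathcal{L}^N_{(0,\,l)}+a_0)$ plus test--function bookkeeping --- is the same as the paper's, and your arguments for (2) and (3) are correct and essentially equivalent to the paper's: for (2) the paper gets the upper bound by testing $\phi\equiv1$ in the inf--characterization \eqref{2.x} and the lower bound by testing $\phi\equiv1$ in the Rayleigh quotient on $[0,l]$, while for (3) it runs your sandwich with the principal eigenfunction in place of arbitrary test functions, obtaining $|\lambda_p-a_0+d/2|\le d\|J\|_\yy l+d(j(l)-1/2)$. For strict monotonicity the paper proceeds differently: it extends $\phi_1$ to $[l_1,l_2]$ by the explicit positive formula $\tilde\phi(x)=d\int_0^{l_1}J(x-y)\phi_1(y)\dy\,/\,(dj(x)-a_0+\lambda_p(\mathcal{L}^N_{(0,\,l_1)}+a_0))$, which produces a nonnegative function with $(\mathcal{L}^N_{(0,\,l_2)}+a_0)\tilde\phi\ge\lambda_p(\mathcal{L}^N_{(0,\,l_1)}+a_0)\tilde\phi$, strict at $x=l_1$, and strictness drops out of the Rayleigh quotient. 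Your zero--extension gives the inequality $\ge$ just as cleanly, but your justification of strictness should be repaired: rather than appealing to ``strict positivity of principal eigenfunctions,'' note that if the zero extension were a maximizer it would satisfy the eigenvalue equation a.e., and at a.e.\ $x\in(l_1,l_1+\delta)$ that equation reads $0=d\int_0^{l_1}J(x-y)\phi_1(y)\dy>0$, a direct contradiction. Your parenthetical alternative is not sound as stated: a supersolution inequality that is strict only at some points yields $\lambda_p(\mathcal{L}^N_{(0,\,l_1)}+a_0)\le\lambda_p(\mathcal{L}^N_{(0,\,l_2)}+a_0)$ from \eqref{2.x}, not the strict inequality.

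The genuine gap is the continuity claim in (1). You establish only left--continuity; for right--continuity you write down the correct pairing identity but explicitly defer the required uniform control of $\phi_l$ (``I expect to extract\ldots''), so that part of the lemma is not proved. The missing estimate can be supplied: normalizing $\|\phi_l\|_{L^2}=1$, the eigenvalue equation gives $(\lambda_p(\mathcal{L}^N_{(0,\,l)}+a_0)-a_0+dj(x))\phi_l(x)=d\int_0^lJ(x-y)\phi_l(y)\dy\le d\|J\|_\yy\sqrt{l}$, and since $\lambda_p(\mathcal{L}^N_{(0,\,l)}+a_0)\ge\lambda_p(\mathcal{L}^N_{(0,\,l_0)}+a_0)>a_0-d/2$ (strict by your own (1) and (3)), the prefactor is bounded below by a positive constant depending only on $l_0$; this bounds $\phi_l$ uniformly, keeps the denominator $\int_0^{l_0}\phi_l\phi_{l_0}\dx$ away from zero, and makes the numerator $O(\sqrt{l-l_0})$ by Cauchy--Schwarz. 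In fairness, the paper does not prove continuity either --- it delegates it to the method of \cite[Proposition 3.4]{CDLL} --- but as a self--contained argument your write--up leaves this step open.
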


\begin{proof} (1)\, Since the continuity of $\lambda_p(\mathcal{L}^N_{(0,\,l)}+a_0)$ in $l$ can be proved by a similar method in \cite[Proposition 3.4]{CDLL}, we only prove the monotonicity.

Let $l_2>l_1>0$, and $\phi_1$ be the corresponding positive eigenfunction to  $\lambda_p(\mathcal{L}^N_{(0,\,l_1)}+a_0)$. Then
 \[\big[dj(x)-a_0+\lambda_p(\mathcal{L}^N_{(0,\,l_1)}+a_0)\big]\phi_1(x)
 =d\int_{0}^{l_1}J(x-y)\phi_1(y)\dy>0\;\;\mbox{in}\;\;[0,l_1],\]
which implies $dj(x)-a_0+\lambda_p(\mathcal{L}^N_{(0,\,l_1)}+a_0)>0$ in $[0,l_1]$. Together with the monotonicity of $j(x)$, we can define the following nonnegative continuous function
\begin{align*}
\tilde{\phi}(x)=\left\{\begin{aligned}
&\phi_1(x),& &x\in[0,l_1],\\
&\frac{d\dd\int_{0}^{l_1}J(x-y)\phi_1(y)\dy}{dj(x)-a_0+\lambda_p(\mathcal{L}^N_{(0,\,l_1)}+a_0)},& &x\in[l_1,l_2].
\end{aligned}\right.
  \end{align*}
It then follows that
  \[d\dd\int_{0}^{l_2}J(x-y)\tilde\phi(y)\dy-dj(x)\tilde\phi
  +a_0\tilde\phi\ge\lambda_p(\mathcal{L}^N_{(0,\,l_1)}+a_0)\tilde\phi\;\; {\rm ~ in ~ }\;[0,l_2],  \]
and this inequality holds strictly in $x=l_1$. Combing with $\tilde{\phi}(l_1)>0$, by the variational characterization of $\lambda_p(\mathcal{L}^N_{(0,\,l)}+a_0)$, we have
  \bess
\lambda_p(\mathcal{L}^N_{(0,\,l_2)}+a_0)&=&\sup_{0\ne\psi\in L^2([0,\,l_2])}\frac{d\dd\int_{0}^{l_2}\!\!\int_{0}^{l_2}J(x-y)\psi(y)\psi(x)\dy\dx
-d\dd\int_{0}^{l_2}\!j(x)\psi^2(x)\dx}{\dd\int_{0}^{l_2}\psi^2(x)\dx}+a_0\\
&\ge&\frac{d\dd\int_{0}^{l_2}\!\!\int_{0}^{l_2}J(x-y)\tilde{\phi}(y)\tilde{\phi}(x)\dy\dx
-d\dd\int_{0}^{l_2}\!j(x)\tilde{\phi}^2(x)\dx}{\dd\int_{0}^{l_2}\tilde{\phi}^2(x)\dx}+a_0\\
 &>&\lambda_p(\mathcal{L}^N_{(0,\,l_1)}+a_0).
\eess

{\rm(2)}\, Since $(\mathcal{L}^N_{(0,\,l)}+a_0) 1\le a_0$, it follows that $\lambda_p(\mathcal{L}^N_{(0,\,l)}+a_0)\le a_0$ for all $l>0$ by taking $\lambda=a_0$ and the test function $\phi\equiv 1$ in \eqref{2.x}. On the other hand, due to the condition {\rm \bf (J)}, for any given $0<\ep\ll 1$, there exists $L>0$ such that $\int_{-L}^{L}J(x)\dx>1-\ep$. For any $l>L$, taking $\phi\equiv 1$ as the test function in the variational characterization of $\lambda_p(\mathcal{L}^N_{(0,\,l)}+a_0)$ we have
 \bess
 \lambda_p(\mathcal{L}^N_{(0,\,l)}+a_0)&\ge&\frac{d\dd\int_{0}^{l}\int_{0}^{l}J(x-y)\dy\dx-d\dd\int_{0}^{l}j(x)\dx}{l}+a_0\\
 &\ge& \frac{d\dd\int_{L}^{l-L}\!\!\int_{0}^{l}J(x-y)\dy\dx}{l}-\frac{d\dd\int_{0}^{l}j(x)\dx}{l}+a_0\\
 &\ge& \frac{d(l-2L)(1-\ep)}{l}-\frac{d\dd\int_{0}^{l}j(x)\dx}{l}+a_0\to-\ep d+a_0 {\rm ~as ~} l\to\yy.
 \eess
By the arbitrariness of $\ep$, $\lim_{l\to\yy}\lambda_p(\mathcal{L}^N_{(0,\,l)}+a_0)\geq a_0$. The conclusion (2) holds.

\sk{\rm(3)}\, Let $\phi_1$ be the corresponding positive eigenfunction to  $\lambda_p(\mathcal{L}^N_{(0,\,l)}+a_0)$. Then
\bess
|\lambda_p(\mathcal{L}^N_{(0,\,l)}+a_0)-a_0+\frac{d}{2}|
&=&\dd\left|\frac{d\dd\int_{0}^{l}\int_{0}^{l}J(x-y)\phi_1(y)\phi_1(x)\dy\dx
-d\dd\int_{0}^{l}j(x)\phi^2_1(x)\dx}{\dd\int_{0}^{l}\phi^2_1(x)\dx}+\frac{d}{2}\right|\\
&\le&\frac{d\dd\int_{0}^{l}\int_{0}^{l}J(x-y)\phi_1(y)\phi_1(x)\dy\dx}{\dd\int_{0}^{l}\phi^2_1(x)\dx}+\frac{d\dd\int_{0}^{l}(j(x)-\frac{1}{2})\phi^2_1(x)\dx}{\dd\int_{0}^{l}\phi^2_1(x)\dx}\\
&\le&d\|J(x)\|_{\yy}l+d(a(l)-\frac{1}{2})\to 0 {\rm ~ as ~ } l\to 0.
\eess
The proof is complete. \end{proof}

\subsection{The steady state prolems}
In the sequel, we further assume that nonlinear term $f$ satisfies
 \begin{enumerate}
\item[{\bf(F2)}] $f\in C^1$ is independent of $(t,x)$, and $f(u)/u$ is strictly decreasing for $u>0$;
\item[{\bf(F3)}] $f'(0)>0$.
\end{enumerate}

From conditions {\bf(F1)}-{\bf(F3)}, we know that $f(u)=0$ has a unique positive root $u^*$. To study the longtime behavior of solution of problem \eqref{1.2}, we next give two lemmas about a steady state problem and a fixed boundary problem on half space respectively.

\begin{lemma}\label{l2.4}
Suppose that $f$ satisfies conditions {\bf(F1)}-{\bf(F3)}. Then the problem
  \bes\label{2.4}
  d\dd\int_{0}^{\yy}J(x-y)U(y)\dy-dU+f(U)=0 {\rm ~ ~ ~ in ~ }\;\overline{\mathbb{R}}^+
  \ees
has a unique bounded positive solution $U\in C(\overline{\mathbb{R}}^+)$, and $\lim_{x\to\yy}U(x)=u^*$. Moreover, $0<U<u^*$ and $U$ is non-decreasing in $\overline{\mathbb{R}}^+$.
\end{lemma}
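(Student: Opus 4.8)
The plan is to obtain existence, the monotone profile, the two--sided bound and the limit at infinity all at once from the associated fixed--domain evolution problem, and then to settle uniqueness by a sliding argument driven by the strict monotonicity of $f(u)/u$. Write $\mathcal N[U](x):=d\int_0^\infty J(x-y)U(y)\,dy-dU(x)+f(U(x))$ and $j(x):=\int_0^\infty J(x-y)\,dy=\int_{-\infty}^x J(s)\,ds\le 1$. First I would produce an ordered pair of stationary sub/supersolutions. Since $f(u^*)=0$, the constant $u^*$ gives $\mathcal N[u^*]=du^*(j(x)-1)\le0$, so $u^*$ is a supersolution. For a subsolution, let $\lambda_p(l)$ be the principal eigenvalue (existing by \cite{HCV}, exactly as for \eqref{2.3}) of $\phi\mapsto d\int_0^l J(x-y)\phi(y)\,dy-d\phi+f'(0)\phi$ on $[0,l]$, with positive eigenfunction $\phi_l$; comparing with the full--line operator (whose principal eigenvalue is $f'(0)$, with constant eigenfunction) gives $\lambda_p(l)\uparrow f'(0)>0$ as in Lemma \ref{l2.3}(2), so $\lambda_p(l_0)>0$ for some large $l_0$. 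Extending $\phi_{l_0}$ by zero and using $f(s)=f'(0)s+o(s)$, one checks $\mathcal N[\varepsilon\phi_{l_0}]=\varepsilon\lambda_p(l_0)\phi_{l_0}+o(\varepsilon)\ge0$ on $(0,l_0)$ and $\mathcal N[\varepsilon\phi_{l_0}]\ge0$ off $(0,l_0)$ (where it reduces to the nonnegative integral term), for all small $\varepsilon$; shrinking $\varepsilon$ we may assume $\varepsilon\phi_{l_0}\le u^*$.

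Then I would solve the fixed--domain problem $u_t=\mathcal N[u]$ on $\overline{\mathbb{R}}^+$ with $u(0,\cdot)\equiv u^*$. As $u^*$ is a stationary supersolution, the comparison principle following Lemma \ref{l2.2} forces $u(t,\cdot)\le u^*$ and $t\mapsto u(t,\cdot)$ nonincreasing, while comparison with the (nondecreasing in $t$) solution issued from $\varepsilon\phi_{l_0}$ keeps $u(t,\cdot)\ge\varepsilon\phi_{l_0}$. Hence $U(x):=\lim_{t\to\infty}u(t,x)$ exists with $\varepsilon\phi_{l_0}\le U\le u^*$, and passing to the limit in the integral equation by dominated convergence shows $U$ solves \eqref{2.4}; in particular $U>0$ on $(0,l_0)$ and $U(0)>0$ from \eqref{2.4} itself. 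Monotonicity comes for free: the shift $v(t,x)=u(t,x+\delta)$ satisfies $v_t\ge\mathcal N[v]$ because, after the substitution $y=z+\delta$, the extra mass $\int_{-\delta}^0J(x-z)v(t,z)\,dz$ is nonnegative, and since $v(0,\cdot)=u^*=u(0,\cdot)$, comparison gives $u(t,x+\delta)\ge u(t,x)$; thus $U$ is nondecreasing. The strict inequality $U<u^*$ follows from \eqref{2.4}, as a contact point $x_0$ would require $d\int_0^\infty J(x_0-y)U\,dy=du^*$, impossible because the left side is $\le u^*j(x_0)<u^*$. Finally $m:=U(\infty)\in(0,u^*]$ exists by monotonicity, and evaluating \eqref{2.4} along $x_n\to\infty$ (using the evenness of $J$ and dominated convergence, $\int_{-x_n}^\infty J(s)U(x_n+s)\,ds\to m$) yields $f(m)=0$, so $m=u^*$.

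For uniqueness I would use the sliding method. Given any bounded positive solution $V$, testing \eqref{2.4} along a sequence $x_n$ with $V(x_n)\to\sup V$ gives $f(\sup V)\ge0$, and since $f(u)/u$ is strictly decreasing with $f(u^*)=0$ (hence $f<0$ on $(u^*,\infty)$) we get $V\le u^*$. The structural point is that strict monotonicity of $f(u)/u$ makes $\tau U$ a \emph{strict} supersolution for every $\tau>1$, since $\mathcal N[\tau U]=\tau U\big(f(\tau U)/(\tau U)-f(U)/U\big)<0$. Because $U$ is nondecreasing with $\inf U=U(0)>0$ and $V\le u^*$, one has $\tau U\ge V$ for all large $\tau$; decreasing $\tau$ to $\tau^*:=\inf\{\tau\ge1:\tau U\ge V\}$, the fact that $\tau^*U-V\to(\tau^*-1)u^*>0$ at infinity rules out loss of order there when $\tau^*>1$, so $\tau^*U-V$ would vanish at a finite $x^*$; but then $\mathcal N[\tau^*U](x^*)-\mathcal N[V](x^*)=d\int_0^\infty J(x^*-y)\big(\tau^*U-V\big)(y)\,dy\ge0$ contradicts $\mathcal N[\tau^*U](x^*)<0=\mathcal N[V](x^*)$. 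Hence $\tau^*=1$ and $V\le U$; the reverse bound follows symmetrically, sliding the strict subsolutions $\sigma U$ ($\sigma<1$) up to $V$, once $V(\infty)=u^*$ is known, giving $V\equiv U$.

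The main obstacle is precisely the loss of regularity and compactness emphasized in the introduction. It appears mildly in justifying that the decreasing--in--$t$ family converges to a genuine continuous solution of \eqref{2.4} (handled by monotonicity and dominated convergence rather than parabolic estimates), and more seriously in showing that an \emph{arbitrary} bounded positive solution tends to $u^*$ at infinity, which is what feeds the ``no loss of order at infinity'' step of both sliding arguments. To establish $V(\infty)=u^*$ I would bound $V$ below by translates of the truncated stationary solutions on $(0,l)$, whose interior values approach $u^*$ as $l\to\infty$ (using $\lambda_p(l)\to f'(0)>0$), combined with the comparison principle, in the spirit of \cite{DN21,DWZ}.
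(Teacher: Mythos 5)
Your overall strategy is sound and, for the existence and qualitative properties, genuinely different from the paper's. The paper builds $U$ as the increasing limit of the truncated stationary solutions $u_l$ on $[0,l]$, proves $\lim_{x\to\infty}U(x)=u^*$ by translating those truncated solutions (via \cite[Proposition 3.6]{CDLL}), and only obtains monotonicity at the very end by comparing $U(\cdot+\delta)$ with $U$. You instead run the parabolic flow downward from the constant supersolution $u^*$, get monotonicity of $U$ essentially for free from the shifted comparison at the parabolic level, and then read off $U(\infty)=u^*$ directly from the equation, since a monotone bounded $U$ has a limit $m\ge U(0)>0$ with $f(m)=0$. That ordering is a real simplification of the paper's Step 2. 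The uniqueness argument is essentially the paper's: the scaling $\tau U$ is a strict supersolution because $f(u)/u$ is strictly decreasing, and the order is closed at infinity using the known limits; like the paper, you must separately establish $V(\infty)=u^*$ by comparison with truncated problems before the upward sliding, and your sketch of that step matches what the paper actually does.

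There are, however, two concrete gaps. First, your proof that $U<u^*$ fails as stated: you discard a contact point $x_0$ by claiming $\int_0^\infty J(x_0-y)U(y)\,dy\le u^*j(x_0)<u^*$, but {\bf(J)} does not require $J>0$ everywhere, so for compactly supported $J$ (which the paper explicitly works with, e.g.\ the kernels $J_n$ in the proof of Theorem \ref{t3.6}, to which Lemma \ref{l2.4} is applied) one has $j(x_0)=1$ for all large $x_0$ and your strict inequality is false. The fix is the paper's: since $u^*$ is not a solution, one may choose a contact point $\bar z$ with $U<u^*$ on one side of $\bar z$, and then $J(0)>0$ together with continuity forces $\int_0^\infty J(\bar z-y)U(y)\,dy<u^*$. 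Second, $U\in C(\overline{\mathbb{R}}^+)$ is part of the statement and is also needed in your sliding argument (to claim that $\tau^*U-V$ ``vanishes at a finite $x^*$'' rather than merely having infimum zero), but you never prove it: a decreasing-in-$t$ limit of continuous functions need not be continuous. The paper devotes a separate step to this, using the positive lower bound of $U$ to get $d-f(U(x))/U(x)\ge C>0$ and then reading continuity off the equation; the same device (or a minimizing-sequence/Fatou substitute, as in the paper's Step 4) is needed for the competitor $V$. Two smaller points are glossed over: solvability of the half-line Cauchy problem, which the paper only establishes in Lemma \ref{l2.5} by the very truncation you are trying to bypass, and the fact that $\varepsilon\phi_{l_0}$ extended by zero is discontinuous at $l_0$, so the comparison keeping $u(t,\cdot)\ge\varepsilon\phi_{l_0}$ should be run on the truncated domain $[0,l_0]$, where $u$ is a supersolution of the truncated problem.
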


\begin{proof}\, {\bf Step 1}: {\it Existence of bounded positive solution}. It is well known (e.g. see \cite[Proposition 3.5]{CDLL}) that for large $l>0$, the following problem
  \bes
  d\dd\int_{0}^{l}J(x-y)u(y)\dy-du+f(u)=0 {\rm ~ ~ ~ in ~ }\; [0,l]
  \lbl{x.3}\ees
has a unique positive solution $u_l\in C([0,l])$. If $u_l(x_0)=\max_{x\in[0,l]}u_l(x)>u^*$ for $x_0\in[0,l]$, then
  \[du_l(x_0)<du_l(x_0)-f(u_l(x_0))=d\dd\int_{0}^{l}J(x_0-y)u_l(y)\dy\le du_l(x_0),\]
which implies $0<u_l\le u^*$ for large $l$.

We claim that $u_l$ is non-decreasing in $l$ for large $l$. Since $u_l$ is positive and continuous in $[0,l]$, for any $l_2>l_1>0$, we can define
  \[\rho_*=\inf\{\rho>1: \rho u_{l_2}\ge u_{l_1}\; {\rm ~ in}\;[0,l_1]\}.\]
If $\rho_*>1$, it follows from the definition of $\rho_*$ that there exists $\bar{x}\in[0,l_1]$ such that $\rho_*u_{l_2}(\bar{x})=u_{l_1}(\bar{x})$.
Thanks to $ \rho_*u_{l_2}(x)\ge u_{l_1}(x)$ in $[0,l_1]$, we have
  \[d\dd\int_{0}^{l_1}J(\bar x-y)\rho_*u_{l_2}(y)\dy-d\rho_*u_{l_2}(\bar{x})+f(\rho_*u_{l_2}(\bar{x}))\ge0.\]
Meanwhile, we easily see that
  \[d\dd\int_{0}^{l_1}J(\bar x-y)\rho_*u_{l_2}(y)\dy-d\rho_*u_{l_2}(\bar{x})+\rho_*f(u_{l_2}(\bar{x}))\le0.\]
Hence $f(\rho_*u_{l_2}(\bar{x}))\ge \rho_*f(u_{l_2}(\bar{x}))$. Since $\rho_*>1$ and $u_{l_2}(\bar{x})>0$, one can derive a contradiction from the condition {\bf(F2)}. So our claim is true. Hence we can define $U(x)=\lim_{l\to\yy}u_l(x)$ for $x\in\overline{\mathbb{R}}^+$. Then $0<U\le u^*$. By the dominated convergence theorem, $U$ satisfies \eqref{2.4}.

{\bf Step 2}: {\it $\lim_{x\to\yy}U(x)=u^*$}. Arguing indirectly, there exist $\ep_1>0$ and $x_n\nearrow \yy$ such that $U(x_n)\le u^*-\ep_1$.
  Taking $w_n(x)=u_{2x_n}(x+x_n)$, then we have
  \[d\dd\int_{-x_n}^{x_n}J(x-y)w_n(y)\dy-dw_n(x)+f(w_n(x))=0 {\rm ~ ~ ~ for ~ }x\in[-x_n,x_n].\]
By \cite[Proposition 3.6]{CDLL}, we have $\lim_{n\to\yy}w_n(x)\to u^*$ locally uniformly in $\mathbb{R}$.  Thus there exists $N>0$ such that $u_{2x_n}(x_n)\ge u^*-\ep_1/2$ for any $n\ge N$, and we further have that $u^*-\ep_1\ge U(x_{N})\ge u_{2x_{N}}(x_{N})\ge u^*-\ep_1/2$. This is a contradiction, and so $\lim_{x\to\yy}U(x)=u^*$.

{\bf Step 3}: {\it The continuity of $U$}. Firstly, it is deduced from $J\in L^1(\mathbb{R})$ and $U\in L^{\yy}(\mathbb{R}^+)$ that $\int_{0}^{\yy}J(x-y)U(y)\dy$ is continuous in $\overline{\mathbb{R}}^+$. By virtue of $u_l\nearrow U$ and $\lim_{x\to\yy}U(x)=u^*$, we see that $U$ has a finite positive lower bound. Hence there exists a positive constant $C$ such that
  \bes\label{2.5}
  d-\frac{f(U(x))}{U(x)}\ge C>0, \;\;\;\forall\; x\in\overline{\mathbb{R}}^+.
  \ees
We shall show that for any $x_n\to x\in\overline{\mathbb{R}}^+$, there holds $U(x_n)\to U(x)$. Without loss of generality, we assume that $U(x_n)\ge U(x)$ for all $n$. Then, by \eqref{2.5}, we obtain
\bes
d\dd\int_{0}^{\yy}\big[J(x_n-y)-J(x-y)\big]U(y)\dy=\big[d-c(x)\big] [U(x_n)-U(x)]\ge C[U(x_n)-U(x)]\ge0,\quad\lbl{2.y}
\ees
where
\begin{align*}
c(x)=\left\{\begin{aligned}
&\frac{f(U(x_n))-f(U(x))}{U(x_n)-U(x)},& &U(x_n)\neq U(x),\\
&0,& &U(x_n)=U(x),
\end{aligned}\right.
  \end{align*}
and we have used the fact that $U(x_n)\ge U(x)$ implies   $\frac{f(U(x_n))-f(U(x))}{U(x_n)-U(x)}\le \frac{f(U(x))}{U(x)}$.
Since the left side of \eqref{2.y} goes to $0$ as $n\to\yy$, we have $U(x_n)\to U(x)$, and so $U\in C(\overline{\mathbb{R}}^+)$.

{\bf Step 4}: {\it Uniqueness}. Suppose that $V$ is another bounded positive solution of \eqref{2.4}. We claim $V\le u^*$. If $\sup_{x\in\overline{\mathbb{R}}^+}V(x):=\bar{V}>u^*$, then there exist $z_n\in\overline{\mathbb{R}}^+$ such that $V(z_n)\to \bar{V}$. Moreover,
  \[d\bar{V}<d\bar{V}-f(\bar{V})=\lim_{n\to\yy}d\dd\int_{0}^{\yy}J(z_n-y)V(y)\dy\le d\bar{V}.\]
Hence $V\le u^*$. We now prove that for any $l>0$, $V$ has a positive lower bound in $[0,l]$. If there exist $l_0>0$ and $\tilde x_n\to \tilde x_0\in[0,l_0]$ such that $V(\tilde x_n)\to 0$. Since $V$ satisfies \eqref{2.4}, we have
\[ d\dd\int_{0}^{l_0}J(\tilde x_n-y)V(y)\dy\le dV(\tilde x_n)+f(V(\tilde x_n)).\]
Letting $n\to\yy$, we derive that
\[\dd\int_{0}^{l_0}J(\tilde x_0-y)V(y)\dy\le0,\]
which implies
\[\dd\int_{0}^{l_0}J(\tilde x_0-y)V(y)\dy=0.\]
Hence $J(\tilde x_0-x)V(x)=0$ almost everywhere on $[0,l_0]$. By {\bf(J)} and positivity of $V$, we immediately obtain a contradiction. Thus $\inf_{x\in[0,l]}V(x)>0$ for all $l>0$. Then for large $l>0$, we can define
\[\eta_*=\inf\{\eta>1: \eta V(x)\ge u_l(x) ~ {\rm in} ~ [0,l]\}.\]
We claim $\eta_*=1$. Once our claim is true, we can show $u_l\le V$ for large $l>0$, and thus $U\le V$.
Assume that $\eta_*>1$. Then $\eta_* V(x)\ge u_l(x)$ in $[0,l]$. If there exists $\hat{x}_0\in[0,l]$ such that $\eta_* V(\hat{x}_0)=u_l(\hat{x}_0)$, then by arguing as in Step 1 we can get a contradiction. So $\eta_* V(x)>u_l(x)$ in $[0,l]$. If $\inf_{x\in[0,l]}\eta_* V(x)-u_l(x)=0$, then there exist $\bar{x}_n$ and $\bar{x}_0\in[0,l]$ such that $\bar{x}_n\to\bar{x}_0$ and $\eta_* V(\bar{x}_n)-u_l(\bar{x}_n)\to0$ as $n\to\yy$. Clearly, we have
\[ d\dd\int_{0}^{l}J(\bar x_n-y)\eta_*V(y)\dy-d\eta_*V(\bar x_n)+f(\eta_*V(\bar x_n))<0.\]
Therefore, by \eqref{x.3}, we deduce that
\bess\lim_{n\to\yy}d\dd\int_{0}^{l}J(\bar x_n-y)u_l(y)\dy&=&\lim_{n\to\yy}du_l(\bar x_n)-f(u_l(\bar{x}_n))=\lim_{n\to\yy}d\eta_*V(\bar{x}_n)-f(\eta_*V(\bar{x}_n))\\
&>&\lim_{n\to\yy} d\dd\int_{0}^{l}J(\bar x_n-y)\eta_*V(y)\dy,\eess
which yields
\[\dd\int_{0}^{l}J(\bar x_0-y)\left[u_l(y)-\eta_*V(y)\right]\dy\ge0.\]
This contradicts to $\eta_* V(x)>u_l(x)$ in $[0,l]$. Hence $\inf_{x\in[0,l]}\eta_* V(x)-u_l(x)>0$, and by the definition of $\eta_*$ and $\eta_*>1$ we easily show a contradiction. So we prove our claim.

Thanks to the above analysis, it is easily shown that $\lim_{x\to\yy}V(x)=u^*$ and $V$ is continuous in $\overline{\mathbb{R}}^+$. Similarly, we define
  \[\gamma_*=\inf\big\{\gamma>1: \gamma U\ge V {\rm ~ in ~ }\overline{\mathbb{R}}^+\big\}.\]
Obviously, $\gamma_*$ is well defined and $\gamma_*\ge1$. Next we show that $\gamma_*=1$. Once it is done, we have $U\ge V$, and so $U\equiv V$. Assume on the contrary that $\gamma_*>1$. By the definition of $\gamma_*$ and the fact that $\lim_{x\to\yy}U(x)=\lim_{x\to\yy}V(x)=u^*$, there must exist $\hat{x}\in\overline{\mathbb{R}}^+$ so that $\gamma_*U(\hat{x})=V(\hat{x})$. Similar to the above, it can be shown that $f(\gamma_*U(\hat{x}))\ge \gamma_*f(U(\hat{x}))$,  which is a contradiction to the assumptions of $f$. Hence $\gamma_*=1$,
and further $U\equiv V$.

{\bf Step 5}: {\it The proof of $U<u^*$ and the monotonicity of $U$}. If there exists $z_0\in\overline{\mathbb{R}}^+$ with $U(z_0)=u^*$, since $u^*$ is not a solution of \eqref{2.4}, then we can find some $\bar z\in \overline{\mathbb{R}}^+$ such that $U(\bar{z})=u^*$ and $U(x)<u^*$ in some left (right) neighborhood of $\bar{z}$. Furthermore, by \eqref{2.4},
   \[u^*=\int_{0}^{\yy} J(\bar{z}-y)U(y)\dy<u^*,\]
which implies that $U<u^*$.

Define $\bar U(x):=U(x+\delta)$ with any $\delta>0$. Then $\bar{U}$ satisfies
   \[d\dd\int_{0}^{\yy}J(x-y)\bar{U}(y)\dy-d\bar{U}+f(\bar{U})\le0 {\rm ~ ~ ~ in ~ }\; \overline{\mathbb{R}}^+.\]
   Similarly to the above, we can show that $U(x+\delta)=\bar{U}(x)\ge U(x)$ in $\overline{\mathbb{R}}^+$, and hence $U$ is nondecreasing in $\overline{\mathbb{R}}^+$. The proof is ended.
\end{proof}

\begin{lemma}\label{l2.5}\, Let $f$ satisfy conditions {\bf(F1)}-{\bf(F3)} and $U$ be the  unique bounded positive solution of \eqref{2.4}. Then, for any nonnegative function $w_0\in C(\overline{\mathbb{R}}^+)\cap L^{\yy}(\mathbb{R}^+)$, the following problem
 \bes\left\{\begin{aligned}\label{2.6}
&w_t= d\int_{0}^{\yy}J(x-y)w(t,y)\dy-dw+f(w), && t>0,\ x\ge0,\\
&w(0,x)=w_0(x)\ge,\,\not\equiv0,  && x\ge0
 \end{aligned}\right.\ees
has a unique solution $w$, and $\lim_{t\to\yy}w(t,x)=U(x)$ locally uniformly in $\overline{\mathbb{R}}^+$.
\end{lemma}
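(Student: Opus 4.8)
The plan is to treat existence--uniqueness and the long-time limit separately, the latter by squeezing $w$ between a monotone increasing family of subsolutions and a monotone decreasing supersolution whose limits are both forced to equal $U$ through the uniqueness in Lemma~\ref{l2.4}.

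First I would establish global well-posedness. Writing \eqref{2.6} in Duhamel form
\[w(t,x)={\rm e}^{-dt}w_0(x)+\int_0^t{\rm e}^{-d(t-s)}\Big[d\int_0^\yy J(x-y)w(s,y)\dy+f(w(s,x))\Big]\,{\rm d}s,\]
and using $\|J*\varphi\|_\infty\le\|\varphi\|_\infty$ (as $\int_\R J=1$) together with the local Lipschitz continuity of $f$ from {\bf(F1)}, the right-hand side is a contraction on $C([0,\tau];\,C(\overline{\mathbb{R}}^+)\cap L^\yy(\mathbb{R}^+))$ for small $\tau$, giving a unique local solution exactly as in \cite{CDLL}. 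To globalise, I would note the a priori bounds $0\le w\le M$ with $M=\max\{u^*,\|w_0\|_\infty\}$: the constant $M$ is a stationary supersolution since $d\int_0^\yy J(x-y)M\dy-dM+f(M)=dM(j(x)-1)+f(M)\le0$ (because $j(x)\le1$ and $f(M)\le0$), while $0$ is a solution; the comparison principle derived from Lemma~\ref{l2.2} traps $w$ between them and precludes blow-up, and its strong form gives $w>0$ for $t>0$ since $w_0\not\equiv0$.

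For the lower estimate I would use the finite-interval steady states $u_l$ of Lemma~\ref{l2.4}, which satisfy $u_l\nearrow U$. For any $0<\de<1$, the zero extension of $\de u_l$ is a stationary subsolution of \eqref{2.6}, because on $[0,l]$
\[d\int_0^\yy J(x-y)\de u_l(y)\dy-d\de u_l+f(\de u_l)=f(\de u_l)-\de f(u_l)>0\]
by the strict monotonicity of $f(u)/u$ in {\bf(F2)}, while the inequality is trivial for $x>l$. Hence the solution $\underline W$ of \eqref{2.6} with this datum is nondecreasing in $t$ and bounded above by the stationary solution $U$, so $\underline W\uparrow\underline W_\yy$, a bounded positive solution of \eqref{2.4}; uniqueness in Lemma~\ref{l2.4} forces $\underline W_\yy\equiv U$, and Dini's theorem turns this into local uniform convergence. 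For any $t_0>0$, positivity of $w(t_0,\cdot)$ lets me pick $l$ large and $\de$ small with $\de u_l\le w(t_0,\cdot)$, and comparison after a time shift yields $\liminf_{t\to\yy}w(t,x)\ge U(x)$ locally uniformly.

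The upper estimate runs the same monotonicity backwards: the solution $\overline W$ of \eqref{2.6} starting from the constant supersolution $M$ is nonincreasing in $t$, and applying the lower estimate to $\overline W$ itself gives $\liminf_t\overline W\ge U>0$, so $\overline W\downarrow\overline W_\yy$ with $\overline W_\yy\ge U>0$ a positive solution of \eqref{2.4}; uniqueness again gives $\overline W_\yy\equiv U$ with local uniform convergence. Since $w_0\le M$, comparison gives $w\le\overline W$, hence $\limsup_{t\to\yy}w(t,x)\le U(x)$ locally uniformly, and combining the two bounds finishes the proof. I expect the genuine obstacle to be not the well-posedness but this last convergence step, specifically justifying that the monotone limits $\underline W_\yy$ and $\overline W_\yy$ really solve the stationary equation \eqref{2.4} --- which I would do by integrating \eqref{2.6} over $[t,t+1]$ and passing $t\to\yy$, using $J\in L^1$ and $0\le w\le M$ for dominated convergence --- and then leaning on the uniqueness of the positive steady state to pin both limits at $U$ rather than at the larger constant $u^*$ that a purely spatially homogeneous comparison would produce.
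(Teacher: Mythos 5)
Your argument is correct and follows essentially the same strategy as the paper: squeeze $w$ between a monotone increasing family built from the finite-interval steady states $u_l$ of Lemma \ref{l2.4} and the monotone decreasing orbit issuing from a large constant, then identify both limits with $U$ through the uniqueness of the bounded positive solution of \eqref{2.4} together with Dini's theorem. The only mechanical differences are that the paper constructs $w$ as the increasing limit of the solutions $w^l$ of the truncated problems (which simultaneously furnishes the lower bound via $w\ge w^l\to u_l$), whereas you use a Duhamel/contraction argument for existence and the zero-extended stationary subsolutions $\delta u_l$ for the lower bound; both routes are sound.
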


\begin{proof} \, We first show the existence and uniqueness of $w$.
Clearly, for large $l>0$, problem
 \bess\left\{\begin{aligned}
&w^l_t= d\int_{0}^{l}J(x-y)w^l(t,y)\dy-dw^l+f(w^l), && t>0,\ 0\le x\le l,\\
&w^l(0,x)=w_0(x),  && 0\le x\le l
 \end{aligned}\right.\eess
has a unique positive solution $w^l$. By the maximum principle, $0\le w^l\le \max\{\|w_0\|_{\yy}, K\}$, and $w^l$ is nondecreasing in $l$. Define $w=\lim_{l\to\yy}w^l$. Then $w(0,x)=w_0(x)$. For any $(t,x)\in \mathbb{R}^+\times \overline{\mathbb{R}}^+$, we have that, for large $l$,
 \[w^l(t,x)-w_0(x)= d\int_{0}^{t}\int_{0}^{l}J(x-y)w^l(\tau,y)\dy{\rm d}\tau-d\int_{0}^{t} w^l(\tau,x){\rm d}\tau+\int_{0}^{t} f(w^l(\tau,x)){\rm d}\tau.\]
Letting $l\to\yy$, it follows from the dominated convergence theorem that
 \[w(t,x)-w_0(x)= d\int_{0}^{t}\int_{0}^{\yy}J(x-y)w(\tau,y)\dy{\rm d}\tau-d\int_{0}^{t} w(\tau,x){\rm d}\tau+\int_{0}^{t} f(w(\tau,x)){\rm d}\tau.\]
This implies that $w$ satsisfies \eqref{2.6} and $w\in C(\overline{\mathbb{R}}^+\times\overline{\mathbb{R}}^+)$. The uniqueness can be deduced by the maximum principle.

Now we prove that longtime behaviour of $w$. Clearly, $\lim_{t\to\yy}w^l(t,x)=u_l(x)$ uniformly in $[0,l]$, which is given in the proof of Lemma \ref{l2.4}, and $\lim_{l\to\yy}u_l(x)\to U(x)$ locally uniformly in $\overline{\mathbb{R}}^+$ by Dini's theorem. Since $w\ge w^l$, we have $\liminf_{t\to\yy}w(t,x)\ge U(x)$ locally uniformly in $\overline{\mathbb{R}}^+$.

Let $W$ be the unique solution of \eqref{2.6} with initial data $W_0(x)=\max\{\|w_0\|_{\yy},K\}$. The maximum principle asserts that $W\ge w$ and $W$ is non-increasing in $t$. It follows from Lemma \ref{l2.4} and Dini's theorem that $\lim_{t\to\yy}W(t,x)=U(x)$ locally uniformly in $\overline{\mathbb{R}}^+$. So $\limsup_{t\to\yy}w(t,x)\le U(x)$ locally uniformly in $\overline{\mathbb{R}}^+$. The proof is ended.
\end{proof}
To carry out the discussion on longtime behavior of solution of problem \eqref{1.3}, we consider the fixed boundary problem
 \bes\left\{\begin{aligned}\label{2.7}
&u_t= d\int_{0}^{l}J(x-y)u(t,y)\dy-dj(x)u+f(u), && t>0,\ 0\le x\le l,\\
&u(0,x)=u_0(x)\geq 0,  && 0\le x\le l,
 \end{aligned}\right.\ees
where $j(x)=\int_{0}^{\yy}J(x-y)\dy$.

\begin{lemma}\label{l2.6}\, Let conditions {\bf(F1)}-{\bf(F3)} hold, $u_0\in C([0,l])$ and $u$ be the unique solution of \eqref{2.7}. Then we have
 \begin{enumerate}\vspace{-2mm}
\item[{\rm(1)}]\, problem \eqref{2.7} has a unique positive steady state $\tilde u_l\in C([0,l])$ if and only if $\lambda_p(\mathcal{L}^N_{(0,\,l)}+f'(0))>0$, and when $u_0\not\equiv 0$, $\lim_{t\to\yy}u=\tilde u_l$  in $C([0,l])$;\vspace{-2mm}
\item[{\rm(2)}]\, if $\lambda_p(\mathcal{L}^N_{(0,\,l)}+f'(0))\le0$, then $0$ is the only nonnegative bounded steady state of problem \eqref{2.7}, and $\lim_{t\to\yy}u=0$  in $C([0,l])$;\vspace{-2mm}
\item[{\rm(3)}]\, $\lim_{l\to\yy}\tilde u_l(x)=u^*$ locally uniformly in $\overline{\mathbb{R}}^+$, where $u^*$ is the unique positive root of $f(u)=0$.\vspace{-2mm}
    \end{enumerate}
\end{lemma}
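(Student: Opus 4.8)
The plan is to read off the dynamics of \eqref{2.7} from the sign of $\lambda_p:=\lambda_p(\mathcal{L}^N_{(0,\,l)}+f'(0))$, using throughout the comparison principle of Subsection 2.1, the strict monotonicity of $f(u)/u$ in {\bf(F2)} (which yields $f(u)<f'(0)u$ for $u>0$ and $f(\rho u)<\rho f(u)$ for $\rho>1$), and the fact that $\mathcal{L}^N_{(0,\,l)}$ is self-adjoint on $L^2([0,l])$ since $J$ is even and $j(x)$ acts by multiplication. Recall $f(u^*)=0$, $f>0$ on $(0,u^*)$ and $f<0$ on $(u^*,\yy)$. I would first dispatch the ``only if'' part of (1) together with the nonexistence claim of (2): if \eqref{2.7} has a positive steady state $\bar u$, then on the compact $[0,l]$ we have $\bar u\ge c_0>0$, and rewriting the steady equation gives $(\mathcal{L}^N_{(0,\,l)}+f'(0))\bar u=g\bar u$ with $g:=f'(0)-f(\bar u)/\bar u\ge\delta>0$. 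Pairing against the positive principal eigenfunction $\phi_l$ and using self-adjointness, $\lambda_p\int_0^l\bar u\phi_l=\int_0^l g\,\bar u\phi_l>0$, so $\lambda_p>0$; contrapositively, $\lambda_p\le0$ forces $\bar u\equiv0$.

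For the ``if'' direction of (1), assume $\lambda_p>0$. Then $\underline u=\ep\phi_l$ is a stationary subsolution for small $\ep$, since $(\mathcal{L}^N_{(0,\,l)})\underline u+f(\underline u)=\ep\phi_l(\lambda_p-o(1))\ge0$ as $f(\ep\phi_l)/(\ep\phi_l)\to f'(0)$ uniformly, while any constant $M\ge\max\{K,u^*,\|u_0\|_\yy\}$ is a supersolution because $\int_0^lJ(x-y)\dy\le j(x)$ and $f(M)\le0$. By the comparison principle the parabolic flow is nonincreasing from $M$ and nondecreasing from $\ep\phi_l$, and converges to a maximal and a minimal positive steady state respectively; a sliding argument with $\rho_*=\inf\{\rho\ge1:\rho U_1\ge U_2\}$, exactly as in Step 4 of the proof of Lemma \ref{l2.4}, uses $f(\rho u)<\rho f(u)$ to force $\rho_*=1$ and hence uniqueness, giving $\tilde u_l$. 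When $u_0\not\equiv0$, the maximum principle yields $u(t_0,\cdot)>0$ for some $t_0>0$, so $u(t_0,\cdot)\ge\ep\phi_l$; comparison then sandwiches $u$ between the nondecreasing flow from $\ep\phi_l$ and the nonincreasing flow from $M\ge\|u_0\|_\yy$, both tending to $\tilde u_l$, and Dini's theorem upgrades this to convergence in $C([0,l])$. For (2) with $\lambda_p\le0$, nonexistence is already known, and for the decay I would verify that $C\phi_l$ (with $C$ large so $C\phi_l\ge u_0$) is a supersolution, since $(\mathcal{L}^N_{(0,\,l)})(C\phi_l)+f(C\phi_l)\le C\lambda_p\phi_l\le0$; the flow from $C\phi_l$ is thus nonincreasing and bounded below by $0$, so it converges to a nonnegative steady state, necessarily $0$, and comparison gives $u\to0$ in $C([0,l])$.

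For (3), I would work with large $l$ (so that $\lambda_p>0$ by Lemma \ref{l2.3} and $u_l$ of \eqref{x.3} exists), and first show $\tilde u_l$ is nondecreasing in $l$: for $l_1<l_2$ the restriction $\tilde u_{l_2}|_{[0,l_1]}$ is a supersolution of the $l_1$-problem because shrinking the integration interval only lowers the nonlocal term, whence $\tilde u_{l_1}\le\tilde u_{l_2}$; moreover $\tilde u_l\le u^*$ as the constant $u^*$ is a supersolution. Hence $U_\yy(x):=\lim_{l\to\yy}\tilde u_l(x)$ exists with $0<U_\yy\le u^*$, and dominated convergence shows $U_\yy$ solves the half-line problem $d\int_0^\yy J(x-y)[U_\yy(y)-U_\yy(x)]\dy+f(U_\yy)=0$. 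To obtain a positive lower bound I would compare with the Dirichlet-type steady state $u_l$ of \eqref{x.3}: since $j(x)\le1$, $u_l$ is a subsolution of the Neumann steady problem, so $\tilde u_l\ge u_l$, and letting $l\to\yy$ gives $U_\yy\ge U$, where $U$ is the solution of Lemma \ref{l2.4} with $U\ge U(0)>0$. Consequently $m:=\inf_{\overline{\mathbb{R}}^+}U_\yy\ge U(0)>0$.

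Finally, evaluating the half-line equation along a sequence $x_n$ with $U_\yy(x_n)\to m$ makes the nonlocal integral asymptotically nonnegative (as $U_\yy(y)\ge m$ and $j\le1$), forcing $f(m)\le0$; since $m\in(0,u^*]$ and $f>0$ on $(0,u^*)$, this gives $m=u^*$, hence $U_\yy\equiv u^*$. Because $\tilde u_l\nearrow u^*$ with $\tilde u_l$ continuous and the limit constant, Dini's theorem then yields the local uniform convergence. The main obstacle is the synthesis in (3) --- in particular establishing the strictly positive lower bound on $U_\yy$, for which the comparison $\tilde u_l\ge u_l$ with the Lemma \ref{l2.4} profile $U$ (whose positivity is uniform in $x$) is the decisive step; the remaining pieces are routine applications of the comparison principle and the sliding method already used for Lemma \ref{l2.4}.
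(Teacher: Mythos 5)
Your proposal is correct, and for parts (1) and (2) it is essentially the paper's argument: the "only if" direction via testing the steady-state identity against the principal eigenpair is interchangeable with the paper's use of the Rayleigh-quotient characterization with $\tilde u_l$ itself as test function, and your construction of $\tilde u_l$ by running the monotone parabolic flow between the sub/supersolution pair $\ep\phi_l$ and $M$ is equivalent to the paper's monotone iteration with the operator $\Gamma$; uniqueness by the sliding argument and the sandwiching of $u$ for the convergence statement are identical. The genuine difference is in part (3). The paper identifies $\tilde U:=\lim_{l\to\yy}\tilde u_l$ with $u^*$ by mimicking Lemma \ref{l2.4}: it establishes continuity of $\tilde U$ and the boundary behaviour $\lim_{x\to\yy}\tilde U(x)=u^*$ (via translated solutions and the whole-line limit result of \cite{CDLL}), and then invokes uniqueness of the bounded positive solution of the half-line Neumann equation, for which the constant $u^*$ is an exact solution. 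You instead prove the uniform positive lower bound $\tilde U\ge U>0$ by observing that the Dirichlet steady state $u_l$ of \eqref{x.3} is a subsolution of the Neumann problem (since $j(x)\le 1$), and then run an infimum argument on the limit equation to force $f(m)\le 0$ at $m=\inf\tilde U\in(0,u^*]$, hence $m=u^*$. Your route is more self-contained: it avoids proving continuity of $\tilde U$ and the limit at infinity, and it does not need the uniqueness theory for the half-line Neumann equation; what it costs is the extra comparison with the Dirichlet family, which is cheap given Lemma \ref{l2.4}. Both arguments are sound; yours is arguably the more economical synthesis for this particular step.
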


\begin{proof}\, (1)\, Let $\tilde u_l$ be a positive steady state of \eqref{2.7}. Similarly to Lemma \ref{l2.4}, one can show that $\tilde u_l\in C([0,l])$. By the condition {\bf(F2)},
 \[d\int_{0}^{l}J(x-y)\tilde u_l(y)\dy-dj(x)\tilde u_l(x)+f'(0)\tilde u_l(x)>0 ~ ~ \mbox{in}\;\;[0,l].\]
Thanks to the variational characterization of $\lambda_p(\mathcal{L}^N_{(0,\,l)}+f'(0))$, we have $\lambda_p(\mathcal{L}^N_{(0,\,l)}+f'(0))>0$.

Suppose that $\lambda_p(\mathcal{L}^N_{(0,\,l)}+f'(0))>0$ with the corresponding positive eigenfunction $\phi$. Let $\beta=\max_{u\in[0,K]}|f'(u)|+d+1$, and define the operator $\Gamma$ by
 \[\Gamma(u)=\frac{1}{\beta}\left(d\int_{0}^{l}J(x-y)u(y)\dy-dj(x)u(x)+f(u)+\beta u\right)\;\;\;\mbox{for}\;\;u\in C([0,l]).\]
Obviously, $\Gamma$ is nondecreasing in $\{u\in C([0,l]):0\le u\le K\}$ and $\Gamma(K)\le K$. As $\lambda_p(\mathcal{L}^N_{(0,\,l)}+f'(0))>0$, we can find $0<\ep\ll 1$ such that $\Gamma(\ep\phi)\ge\ep\phi$ and $\ep\phi\le K$. Then by a simple iteration argument and the dominated convergence theorem, we can find a steady state of \eqref{2.7} denoted by $\tilde u_l$ with $\ep\phi\le \tilde u_l\le K$. Proof of the uniqueness and continuity is similar to that  of Lemma \ref{l2.4}.

Now we study the longtime behaviour of solution $u$ of \eqref{2.7}. Since $u_0\ge, \not\equiv 0$, the maximum principle gives $u(t,x)>0$ for $t>0$ and $0\le x\le l$. So, we may assume $u_0>0$ in $[0,l]$. There is $0<\ep\ll 1$ such that $\ep\phi\le u_0$ in $[0,l]$. Let $\underline u$ and $\bar u$ be the unique solution of \eqref{2.7} with initial data $\ep\phi$ and $\max\{K, \|u_0\|_{\yy}\}$, respectively. By the comparison principle, $\underline u\leq u\leq  \bar u$, and $\underline u$ and $\bar u$ are, respectively, nondecreasing and nonincreasing in $t$. It then follows from our early analysis that $\lim_{t\to\yy}\underline u=\lim_{t\to\yy}\bar u=\tilde u_l$ in $C([0,l])$. Hence $\lim_{t\to\yy}u=\tilde u_l$ in $C([0,l])$.

\sk{\rm(2)}\, Arguing indirectly, let $u_l\not\equiv0$ be a nonnegative steady state of \eqref{2.7}. Clearly, $u_l\in C([0,l])$, and $u_l>0$ in $[0,l]$ by the condition {\bf(J)}. Similarly to the above,
  \[d\int_{0}^{l}J(x-y)u_l(y)\dy-dj(x)u_l(x)+f'(0) u_l(x)>0~ ~ \mbox{in}\;\;[0,l].\]
This implies $\lambda_p(\mathcal{L}^N_{(0,\,l)}+f'(0))>0$. So, problem \eqref{2.7} has no nontrivial and nonnegative bounded steady state if  $\lambda_p(\mathcal{L}^N_{(0,\,l)}+f'(0))\le0$. Similar to the argument in (1),  $\lim_{t\to\yy}u=0$ in $C([0,l])$.

\sk{\rm(3)}\, Let $\tilde u_l$ be the unique positive steady state of \eqref{2.7}. By arguing as in the proof of Lemma \ref{l2.4}, we conclude that $\tilde u_l\le u^*$ and $\tilde u_l$ is nondecreasing in $l$. Then $\tilde{U}:=\lim_{l\to\yy}\tilde u_l$ exists and satisfies
 \[d\int_{0}^{\yy}J(x-y)\tilde{U}(y)\dy-dj(x)\tilde{U}+f(\tilde{U})=0.\]
Similarly to the proof of Lemma \ref{l2.4}, $\tilde{U}$ is continuous in $\overline{\mathbb{R}}^+$ and $\lim_{x\to\yy}\tilde{U}(x)=u^*$, and then we further have $\tilde{U}\equiv u^*$. Then by Dini's theorem, conclusion {\rm (3)} hlods.\end{proof}

The rest of this section is prepared for the study of longtime behavior of solution of \eqref{1.4}.

\begin{lemma}\label{l2.7}\, Suppose that $\lambda$ is a positive constant and the function $k$ satisfies
 \begin{enumerate}[leftmargin=4em]
\item[{\bf(K)}]\, $k\in C(\overline{\mathbb{R}}^+)\cap L^{\yy}(\mathbb{R}^+)$, $\underline k:=\inf_{x\in\overline{\mathbb{R}}^+}k(x)>0$ and $k_\yy :=\lim_{x\to\yy}k(x)<\yy$.
 \end{enumerate}
Consider problem
  \bes\label{2.8}
  d\dd\int_{0}^{\yy}J(x-y)U(y)\dy-du+U(k(x)-\lambda U)=0 {\rm ~ ~ ~ in ~ }\;\overline{\mathbb{R}}^+.
  \ees
Then the following results hold:
  \begin{enumerate}\vspace{-2mm}
\item[{\rm(1)}]\, problem \eqref{2.8} has a unique bounded positive solution $U_k\in C(\overline{\mathbb{R}}^+)$, and $\lim_{x\to\yy}U_k(x)={k_\yy}/{\lambda}$;\vspace{-2mm}
\item[{\rm(2)}]\, let functions $k_i$ satisfy the condition {\bf(K)} and $k_1\le k_2$ in $\overline{\mathbb{R}}^+$, then $U_{k_1}\le U_{k_2}$ in $\overline{\mathbb{R}}^+$.\vspace{-2mm}
    \end{enumerate}
\end{lemma}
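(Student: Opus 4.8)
The plan is to follow closely the scheme of Lemma \ref{l2.4}, observing that the reaction term $f(x,U):=U(k(x)-\lambda U)$ is, for each fixed $x$, a logistic nonlinearity satisfying $x$-dependent analogues of {\bf(F1)}--{\bf(F3)}: it is $C^1$, $f(x,0)=0$, $\partial_U f(x,0)=k(x)\ge\underline k>0$, the ratio $f(x,U)/U=k(x)-\lambda U$ is strictly decreasing in $U>0$ (so that $f(x,\rho U)<\rho f(x,U)$ for every $\rho>1$ and $U>0$), and $f(x,U)<0$ whenever $U>\|k\|_\infty/\lambda$. The decisive point is that this subhomogeneity holds \emph{uniformly} in $x$, and it is exactly the structural property that powers every sliding/scaling argument in the proof of Lemma \ref{l2.4}.

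First I would establish existence together with the uniform bound $0<U_k\le\|k\|_\infty/\lambda$. I solve the truncated problem on $[0,l]$ for large $l$; its solvability follows as in \cite[Proposition 3.5]{CDLL}, since the principal eigenvalue of the linearization at $0$ increases with $l$ to a limit bounded below by $\underline k>0$ and is therefore positive for large $l$. This yields a unique positive $u_l$, and the maximum-point argument of Lemma \ref{l2.4} gives $u_l\le\|k\|_\infty/\lambda$, while the $\rho_*$-sliding argument (which uses only subhomogeneity) shows $u_l$ is nondecreasing in $l$. Then $U_k:=\lim_{l\to\infty}u_l$ solves \eqref{2.8} by the dominated convergence theorem. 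Continuity of $U_k$ and uniqueness follow as in Steps 3 and 4 of Lemma \ref{l2.4}; the only change is that the lower bound $d-f(x,U)/U\ge C>0$ is again available because $k$ is bounded and $U_k$ has a positive lower bound on each compact set.

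For part (2) I would observe that if $k_1\le k_2$ then $U_{k_2}$ is a supersolution of the $k_1$-problem: substituting $U_{k_2}$ into the $k_1$-equation leaves the residual $-U_{k_2}(k_2-k_1)\le0$. Comparing the solution $U_{k_1}$ against this supersolution through the $\gamma_*$-sliding argument of Step 4 of Lemma \ref{l2.4} (once more relying only on subhomogeneity) yields $U_{k_1}\le U_{k_2}$ in $\overline{\mathbb{R}}^+$.

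The main obstacle is the tail limit $\lim_{x\to\infty}U_k(x)=k_\infty/\lambda$. I argue by contradiction as in Step 2 of Lemma \ref{l2.4}: if $U_k(x_n)\le k_\infty/\lambda-\epsilon_1$ along some $x_n\nearrow\infty$, set $w_n(x)=u_{2x_n}(x+x_n)$, so that $w_n$ solves the shifted equation with coefficient $k(x+x_n)$. Since $k(x)\to k_\infty$, this coefficient converges locally uniformly to the constant $k_\infty$, and passing to the limit (via \cite[Proposition 3.6]{CDLL}, needed here in a form allowing coefficients that converge locally uniformly) the $w_n$ tend to a bounded positive solution $W$ on $\mathbb{R}$ of the constant-coefficient problem, whose only such solution is $W\equiv k_\infty/\lambda$ by the Liouville-type uniqueness. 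Hence $u_{2x_n}(x_n)=w_n(0)\to k_\infty/\lambda$, contradicting $u_{2x_n}(x_n)\le U_k(x_n)\le k_\infty/\lambda-\epsilon_1$. The delicate part, where most care is required, is justifying the passage to the limit of $w_n$ in the absence of any regularizing or compactness effect and with an $x$-dependent coefficient; this must be handled through the integral formulation and the monotone structure rather than through equicontinuity.
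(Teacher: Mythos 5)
Your skeleton for existence, continuity, uniqueness and part (2) is sound and close in spirit to the paper: the paper constructs $U_k$ by monotone iteration between the sub-solution $\underline u$ (the solution of \eqref{2.4} with the constant $\underline k$, supplied by Lemma \ref{l2.4}) and the super-solution $\|k\|_\yy/\lambda$, rather than by exhausting with truncated problems, but your truncation route works equally well since $\lambda_p(\mathcal{L}^d_{(0,l)}+k)\ge\lambda_p(\mathcal{L}^d_{(0,l)}+\underline k)>0$ for large $l$; and part (2) is indeed the $\gamma_*$-sliding argument once both tail limits are known. The problem is the tail limit itself, and there are two concrete gaps there. First, the translation-and-Liouville scheme you propose requires passing to the limit in $w_n(x)=u_{2x_n}(x+x_n)$, and you correctly flag that there is no regularizing effect; but the fallback you suggest (``the monotone structure'') is not available either, because the translated coefficients $k(\cdot+x_n)$ are not monotone in $n$, so neither equicontinuity nor monotone convergence applies, and the needed variable-coefficient version of \cite[Proposition 3.6]{CDLL} is exactly what is missing. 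Second, even granting that step, your contradiction hypothesis $U_k(x_n)\le k_\yy/\lambda-\ep_1$ only rules out $\liminf_{x\to\yy}U_k<k_\yy/\lambda$. It does not control $\limsup_{x\to\yy}U_k$, and the global bound $U_k\le\|k\|_\yy/\lambda$ does not help because $\|k\|_\yy$ may strictly exceed $k_\yy$ (in Lemma \ref{l2.4} this direction came for free from $U\le u^*$; here it does not).

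The paper avoids both difficulties by working directly with $u_*:=\liminf_{x\to\yy}U_k(x)$ and $u^0:=\limsup_{x\to\yy}U_k(x)$ and never invoking compactness. If $u_*<k_\yy/\lambda$, choosing $x_n$ with $U_k(x_n)\to u_*$ and reading off the equation gives $d\int_0^\yy J(x_n-y)U_k(y)\,{\rm d}y\to du_*-u_*(k_\yy-\lambda u_*)<du_*$, while Fatou's lemma applied to $\int_{-\yy}^{\yy}J(y)U_k(x_n+y)\chi_{[-x_n,\yy)}(y)\,{\rm d}y$ forces the same liminf to be at least $u_*$ (after dividing by $d$), a contradiction; the symmetric argument for $u^0>k_\yy/\lambda$ uses the dominated convergence theorem with the majorant $J(y)\sup_{x\ge\tilde x_n+y}U_k(x)$, which converges pointwise to $J(y)u^0$. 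I would replace your Step on the tail limit by this liminf/limsup argument; the rest of your proposal then goes through.
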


\begin{proof} (1)\, We show the existence by a monotone iteration argument. By Lemma \ref{l2.4}, problem
  \[ d\dd\int_{0}^{\yy}J(x-y)u(y)\dy-du+u(\underline k-\lambda u)=0 {\rm ~ ~ ~ in ~ }\;\overline{\mathbb{R}}^+\]
has a unique positive solution $\underline{u}$ and $0<\underline{u}<\underline k/\lambda$.
Let $\bar{u}=\|k\|_{\yy}/\lambda$ and $\hat\beta=d+2\|k\|_{\yy}+1$. Similarly, the operator $\hat\Gamma$ defined by
  \[\hat\Gamma(u)=\frac{1}{\hat\beta}\left(d\int_{0}^{\yy}J(x-y)u(y)\dy-du+u(k(x)-\lambda u)+\hat\beta u\right), \ \ u\in C(\overline{\mathbb{R}}^+)\]
is nondecreasing in $\{u\in C(\overline{\mathbb{R}}^+):0\le u\le \bar{u}\}$, and $\hat\Gamma( \underline{u})\ge \underline{u}$ and $\hat\Gamma(\bar{u})\le \bar{u}$. Hence, by a iteration process, problem \eqref{2.8} has a positive solution $U_k$ with $\underline{u}\le U_k\le\bar{u}$. Since $d\int_{0}^{\yy}J(x-y)U_k(y)\dy$ and $k$ are continuous, it follows from a quadratic formula that $U_k$ is also continuous in $\overline{\mathbb{R}}^+$.

Now we prove $\lim_{x\to\yy}U_k(x)=\frac{k_\yy }{\lambda}$. Let   $u_*:=\liminf_{x\to\yy}U_k(x)$, then $u_*>0$ due to $U_k\ge\underline{u}$. If $u_*<k_\yy /\lambda$, then there exists $x_n\nearrow\yy$ such that $U_k(x_n)\to u_*$. Hence, as $n\to\yy$,
  \[d\int_{0}^{\yy}\!J(x_n-y)U_k(y)\dy=dU_k(x_n)-U_k(x_n)(k(x_n)-\lambda U_k(x_n))\to du_*-u_*(k_\yy -\lambda u_*)<du_*.\]
Let $\chi_{[-x_n,\yy)}(y)$ be the characteristic function of the set $[-x_n,\yy)$. Then, by Fatou's lemma,
  \bess
  \liminf_{n\to\yy}\int_{0}^{\yy}J(x_n-y)U_k(y)\dy
  &=&\liminf_{n\to\yy}\int_{-x_n}^{\yy}J(y)U_k(x_n+y)\dy\\
  &=&\liminf_{n\to\yy}\int_{-\yy}^{\yy}J(y)U_k(x_n+y)\chi_{[-x_n,\yy)}(y)\dy\\
  &\ge&\int_{-\yy}^{\yy}\liminf_{n\to\yy}J(y)U_k(x_n+y) \chi_{[-x_n,\yy)}(y)\dy\\
  &\ge&\int_{-\yy}^{\yy}J(y)u_*\dy=u_*.
  \eess
We get a contradiction, and so $u_*\ge k_\yy /\lambda$.

Assume $u^0:=\limsup_{x\to\yy}U_k(x)>k_\yy /\lambda$, and there is  $\tilde{x}_n\nearrow\yy$ such that $U_k(\tilde{x}_n)\to u^0$. So
  \[d\int_{0}^{\yy}J(\tilde x_n-y)U_k(y)\dy=dU_k(\tilde x_n)-U_k(\tilde x_n)(k(\tilde x_n)-\lambda U_k(\tilde x_n))\to du^0-u^0(k_\yy -\lambda u^0)>du^0\]
as $n\to\yy$. On the other hand, by the dominated convergence theorem,
\bess
  \lim_{n\to\yy}\int_{0}^{\yy}J(\tilde x_n-y)U_k(y)\dy&=&\lim_{n\to\yy}\int_{-\tilde x_n}^{\yy}J(y)U_k(\tilde x_n+y)\dy\\
  &=&\lim_{n\to\yy}\int_{-\yy}^{\yy}J(y)U_k(\tilde x_n+y) \chi_{[-\tilde x_n,\yy)}(y)\dy\\
  &\le&\lim_{n\to\yy}\int_{-\yy}^{\yy}J(y)\sup_{x\ge \tilde x_n+y}U_k(x) \chi_{[-\tilde x_n,\yy)}(y)\dy\\
  &=&\int_{-\yy}^{\yy}J(y)u^0\dy=u^0.
  \eess
We get a contradiction, and so $u^0\le k_\yy /\lambda$. Therefore, $\lim_{x\to\yy}U_k(x)=k_\yy /\lambda$.

We now prove the uniqueness. Let $v$ be another bounded positive solution of \eqref{2.8}. Similarly, $v$ is continuous and $v\le k_\yy /\lambda$. Similar to the proof of Lemma \ref{l2.4}, $v\ge \underline{u}$. Thanks to the above iteration process, it yields  $v\ge U_k$, and thus $\lim_{x\to\yy}v(x)=k_\yy /\lambda$. So similarly to the methods in Lemma \ref{l2.4}, we have $U_k\equiv v$. The uniqueness is obtained.

\sk{\rm(2)}\, By the conclusion (1), we have that $U_{k_i}$ is positive and continuous, and $\lim_{x\to\yy}U_{k_1}(x)=k_{1,\yy}/\lambda\le k_{2,\yy}/\lambda=\lim_{x\to\yy}U_{k_2}(x)$. Similarly to the proof of Lemma \ref{l2.4}, one can show that $U_{k_1}\le U_{k_2}$ in $\overline{\mathbb{R}}^+$, and the details are omitted here.
\end{proof}

\begin{remark}\label{r2.1}\, In Lemma \ref{l2.7}, if we further suppose that $k$ is strictly increasing in $\overline{\mathbb{R}}^+$, so is $U_k$. In fact, taking advantage of the analogous arguments in the proof of Lemma \ref{l2.4}, we can show that $U_k$ is nondecreasing in $\overline{\mathbb{R}}^+$. If $U_k(x_1)=U_k(x_2)$ for some $0<x_1<x_2$, it then follows that
\[\int_{0}^{\yy}J(x_1-y)U_k(y)\dy>\int_{0}^{\yy}J(x_2-y)U_k(y)\dy.\]
Thus
\[\int_{-x_1}^{\yy}J(y)U_k(y+x_1)\dy>\int_{-x_2}^{\yy}J(y)U_k(y+x_2)\dy,\]
which is impossible because $U_k$ is nondecreasing and positive in $\overline{\mathbb{R}}^+$.
\end{remark}

\begin{remark}\label{r2.2}
For any $0<\ep\ll1$, let $U_k^\ep$ be the unique bounded positive solution of \eqref{2.8} with $k(x)$ replaced by $k(x)\pm\ep$. By the boundness and monotone convergence of $U_k^\ep$, and uniqueness of bounded positive solution of \eqref{2.8}, we have that $U_k^\ep\to U_k$ as $\ep\to0$.
\end{remark}

\begin{lemma}\label{l2.8}\, Let $k$ satisfy the condition {\bf(K)}, and $U_k$ be the unique bounded positive solution of \eqref{2.8}. Assume $w_0\in C(\overline{\mathbb{R}}^+)\cap L^{\yy}(\mathbb{R}^+)$ and $w_0\ge,\not\equiv 0$. Then the problem
  \bess\left\{\begin{aligned}
&w_t= d\int_{0}^{\yy}J(x-y)w(t,y)\dy-dw+w(k(x)-\lambda w), && t>0,\ x\ge0,\\
&w(0,x)=w_0(x)\ge,\not\equiv0,  && x\ge0
 \end{aligned}\right.\eess
has a unique solution $w$, and $\lim_{t\to\yy}w(t,x)=U_k(x)$ locally uniformly in $\overline{\mathbb{R}}^+$.
\end{lemma}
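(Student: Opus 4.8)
The plan is to establish Lemma \ref{l2.8} by combining the global well-posedness argument already used for Lemma \ref{l2.5} with a squeeze (sub/supersolution) argument anchored at the stationary profile $U_k$ from Lemma \ref{l2.7}. First I would handle existence and uniqueness of $w$ exactly as in Lemma \ref{l2.5}: for large $l>0$ consider the truncated problem on $[0,l]$ with the finite-domain operator $d\int_0^l J(x-y)w^l(t,y)\,\mathrm{d}y-dw^l+w^l(k(x)-\lambda w^l)$ and the same initial data $w_0$; this has a unique solution $w^l$, and the maximum principle gives $0\le w^l\le \max\{\|w_0\|_\yy,\|k\|_\yy/\lambda\}$ together with monotonicity of $w^l$ in $l$. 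Setting $w=\lim_{l\to\yy}w^l$ and passing to the limit in the integrated (Duhamel-type) form via the dominated convergence theorem shows that $w$ solves the full problem on $\overline{\mathbb{R}}^+$ and lies in $C(\overline{\mathbb{R}}^+\times\overline{\mathbb{R}}^+)$; uniqueness then follows from the maximum principle (Lemma \ref{l2.2}). The only structural difference from Lemma \ref{l2.5} is that the reaction term $w(k(x)-\lambda w)$ is spatially inhomogeneous through $k(x)$, but since $k$ is bounded this does not affect any of these steps.

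The substantive part is the convergence $\lim_{t\to\yy}w(t,x)=U_k(x)$ locally uniformly, which I would prove by trapping $w$ between a lower and an upper solution that both converge to $U_k$. For the upper bound, let $\overline W$ solve the same equation with the constant initial datum $\overline W_0=\max\{\|w_0\|_\yy,\|k\|_\yy/\lambda\}$; by the comparison principle $w\le\overline W$, and since $\overline W_0$ is a supersolution of the stationary equation \eqref{2.8}, $\overline W$ is nonincreasing in $t$, so it decreases to a stationary limit which must be a bounded positive solution of \eqref{2.8}, hence equals $U_k$ by the uniqueness in Lemma \ref{l2.7}. For the lower bound, using {\bf(F3)}-type positivity built into $k$ (here $\underline k>0$) the linearization at $0$ is supercritical, so one can pick a small subsolution: concretely, take the unique positive solution $\underline u$ of the autonomous problem with $k$ replaced by the constant $\underline k$ (which exists by Lemma \ref{l2.4} applied to $f(u)=u(\underline k-\lambda u)$) and use a small multiple $\ep\underline u$, or more simply use the monotone lower solution generated by the truncated finite-$l$ states $u_l$ whose limit is $U_k$. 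The comparison principle then yields $\liminf_{t\to\yy}w\ge U_k$, and combining the two bounds gives the locally uniform convergence, with Dini's theorem upgrading the pointwise monotone convergence of the bounding solutions to local uniform convergence.

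The main obstacle I anticipate is the lower barrier, not the upper one. Because the domain is the unbounded half-line and there is no regularizing/compactness effect for the nonlocal operator (as the authors emphasize in the introduction), one cannot simply invoke parabolic smoothing to extract a convergent subsequence of $w(t,\cdot)$ and identify its limit; instead the argument must be driven entirely by ordered sub/supersolutions plus the uniqueness of the stationary state. The delicate point is ensuring the subsolution is genuinely positive and bounded below on compact sets for all large $t$ while still lying beneath $w$: since $w_0\ge,\not\equiv0$, Lemma \ref{l2.2} forces $w(t,x)>0$ for $t>0$, and then on any interval $[0,l]$ one can place $\ep\phi$ (with $\phi$ the principal eigenfunction associated with $\lambda_p(\mathcal L^N_{(0,l)}+\underline k)>0$ for large $l$) below $w(t_0,\cdot)$ at some fixed $t_0>0$; the induced lower solution increases in $t$ toward $\tilde u_l$, and letting $l\to\yy$ pushes the lower limit up to $U_k$. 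Assembling these ingredients gives the claim; the genuinely new input beyond Lemmas \ref{l2.4}--\ref{l2.7} is merely the bookkeeping needed to keep the $k(x)$-dependence inside the sub/supersolution comparison.
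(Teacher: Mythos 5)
Your proposal is correct and follows essentially the same route as the paper, which omits the proof of Lemma \ref{l2.8} precisely because it is the adaptation of Lemma \ref{l2.5}: existence via monotone truncated problems on $[0,l]$, an upper barrier from the constant initial datum decreasing to $U_k$, and a lower barrier from the truncated positive steady states increasing to $U_k$ (identified via the uniqueness in Lemma \ref{l2.7}), with Dini's theorem giving local uniformity. The only slip is notational: the relevant principal eigenvalue for the subsolution on $[0,l]$ is that of the Dirichlet-type operator $\mathcal{L}^{d}_{(0,\,l)}+\underline k$, not $\mathcal{L}^{N}_{(0,\,l)}+\underline k$, since the equation in \eqref{2.8} carries the term $-dw$ rather than $-dj(x)w$.
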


The proof of Lemma \ref{l2.8} is similar to that of Lemma \ref{l2.5}, and we omit it here.

\begin{lemma}\label{l2.9}
Assume that $h(t)$ is continuous in $\overline{\mathbb{R}}^+$ and increasing to $\yy$, and $k$  satisfies the condition {\bf(K)} and $U_k$ is the unique bounded positive solution of \eqref{2.8}. Let $v$ be a nonnegative and bounded continuous function in $\overline{\mathbb{R}}^+\times\overline{\mathbb{R}}^+$ and satisfy
 \bes\label{2.9}
 \limsup_{t\to\yy}v(t,x)\le k(x) {\rm ~ ~ locally ~ uniformly ~ in ~ }\overline{\mathbb{R}}^+.
 \ees
Suppose that $u$ satisfies
\bess\left\{\begin{aligned}
&u_t= d\int_{0}^{h(t)}J(x-y)u(t,y)\dy-du+u(v(t,x)-\lambda u), && t>0,\ 0\le x<h(t),\\
&u(t,h(t))=0, &&t>0,\\
&h(0)=h_0,\;\; u(0,x)=u_0(x), && 0\le x\le h_0.
 \end{aligned}\right.\eess
Then $\limsup_{t\to\yy}u(t,x)\le U_k(x)$ locally uniformly in $\overline{\mathbb{R}}^+$.
 \end{lemma}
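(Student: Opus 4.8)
The plan is to prove this via an upper-solution comparison argument. The key idea is that the hypothesis \eqref{2.9} lets us eventually dominate the spatially inhomogeneous growth rate $v(t,x)$ by a constant-in-time (but $x$-dependent) rate that is arbitrarily close to $k(x)$ from above. I would fix $0<\ep\ll1$ and use \eqref{2.9} to find $T_\ep>0$ so that $v(t,x)\le k(x)+\ep$ for all $t\ge T_\ep$ and all $x$ in a suitable bounded range; then on the half space $\overline{\mathbb{R}}^+$ I compare $u$ against the solution of the full-space problem of Lemma \ref{l2.8} with growth rate $k(x)+\ep$.

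First I would set up the comparison. For $t\ge T_\ep$, the solution $u$ satisfies the differential inequality obtained by replacing $v(t,x)$ with $k(x)+\ep$ in its equation, after noting that extending the integral from $\int_0^{h(t)}$ to $\int_0^{\yy}$ only increases the right-hand side (since $u\ge0$ and $J\ge0$), and that on $\overline{\mathbb{R}}^+\setminus[0,h(t))$ we may regard $u$ as zero so that $u$ is a subsolution on all of $\overline{\mathbb{R}}^+$. Thus $u$ is a subsolution to the problem in Lemma \ref{l2.8} posed with $k$ replaced by $k+\ep$ and with initial time $T_\ep$ and initial data $u(T_\ep,\cdot)$. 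Let $w^\ep$ denote the solution of that full-space problem. By the comparison principle (which follows from the maximum principle Lemma \ref{l2.2}) we obtain $u(t,x)\le w^\ep(t,x)$ for all $t\ge T_\ep$ and $x\ge0$.

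Next I would pass to the limit in $t$. By Lemma \ref{l2.8} applied with growth rate $k(x)+\ep$, we have $\lim_{t\to\yy}w^\ep(t,x)=U_k^\ep(x)$ locally uniformly in $\overline{\mathbb{R}}^+$, where $U_k^\ep$ is the unique bounded positive solution of \eqref{2.8} with $k$ replaced by $k+\ep$, in the notation of Remark \ref{r2.2}. Taking $\limsup_{t\to\yy}$ in the inequality $u\le w^\ep$ then yields
\[\limsup_{t\to\yy}u(t,x)\le U_k^\ep(x)\quad\text{locally uniformly in }\overline{\mathbb{R}}^+.\]
Finally, invoking Remark \ref{r2.2}, which gives $U_k^\ep\to U_k$ as $\ep\to0$, and letting $\ep\to0$, we conclude $\limsup_{t\to\yy}u(t,x)\le U_k(x)$ locally uniformly in $\overline{\mathbb{R}}^+$, as desired.

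The main obstacle is the careful justification of the comparison step on the half space, since $u$ lives on the moving domain $[0,h(t))$ while $w^\ep$ lives on all of $\overline{\mathbb{R}}^+$. I expect the delicate point to be verifying that the zero-extension of $u$ is genuinely a subsolution across the free boundary $x=h(t)$ and that the boundedness and continuity hypotheses on $v$ and $u$ suffice to apply Lemma \ref{l2.2}; one must also ensure the ``locally uniformly'' convergence is preserved when composing the two limits in $t$ and in $\ep$, which is handled by the local uniformity already present in Lemma \ref{l2.8} and Remark \ref{r2.2}. The logistic structure (the $-\lambda u^2$ term) guarantees the relevant solutions stay bounded, so no blow-up complicates the estimates.
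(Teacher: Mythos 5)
Your overall strategy (dominate $v$ by a time-independent growth rate close to $k$, compare with the half-space problem of Lemma \ref{l2.8}, then remove the perturbation via Remark \ref{r2.2}) is the right skeleton, but there is a genuine gap at the very first step. The hypothesis \eqref{2.9} is only a \emph{locally uniform} upper limit: for each $\ep$ it gives $v(t,x)\le k(x)+\ep$ for $t\ge T_\ep$ and $x$ in a \emph{compact} set, and you acknowledge this (``a suitable bounded range''), yet you then run the comparison with growth rate $k(x)+\ep$ on all of $\overline{\mathbb{R}}^+$. That comparison requires $v(t,x)\le k(x)+\ep$ wherever $u$ can be positive: writing $\psi=w^\ep-u$, the differential inequality for $\psi$ picks up the extra term $u(t,x)\bigl(k(x)+\ep-v(t,x)\bigr)$, which must be nonnegative for the maximum principle (Lemma \ref{l2.2}) to apply. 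Since $h(t)\uparrow\yy$, the support of $u$ eventually reaches arbitrarily large $x$, and there \eqref{2.9} gives no control --- $v$ is only known to be bounded by $\|v\|_{\yy}$, which may well exceed $k(x)+\ep$. Restricting the comparison to a bounded interval does not rescue the argument either, because the nonlocal operator couples all sites, so a comparison on $[0,M]$ needs an ordering hypothesis outside $[0,M]$ as well (this is precisely the role of the extra condition on $[0,r(t)]$ in Theorem \ref{t3.4}).

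The paper closes this gap by replacing $k+\ep$ with an auxiliary rate $k_n$ that equals $k+1/n$ on $[0,n]$ but is raised to a constant $K_0+1>\|v\|_{\yy}$ for $x>n+1/2$; then $k_n$ dominates $v(t,\cdot)$ on the whole half-line for $t\ge T_n$, and the global comparison with the solution $z_n$ of the full-space problem is legitimate. The price is that $k_n\to k$ only locally uniformly and $\lim_{x\to\yy}k_n(x)=K_0+1\ne k_\yy$, so Remark \ref{r2.2} (which concerns the uniform perturbations $k\pm\ep$) no longer applies directly; instead one shows that the corresponding steady states $Z_n$ are nonincreasing in $n$, uses the uniform lower bound $Z_n\ge\ep U_k$ to see that the limit $Z$ is positive, verifies that $Z$ solves \eqref{2.8}, and invokes the uniqueness in Lemma \ref{l2.7} to conclude $Z=U_k$. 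Your proposal is missing both of these ingredients, and without them the key comparison step fails.
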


\begin{proof}\, The idea of this proof comes from \cite[Lemma 3.14]{DWZ}.
For any integer $n\ge1$, by \eqref{2.9}, there exist $T_n\nearrow\yy$ such that, for $t\ge T_n$ and $x\in[0,n+1]$, we have $v(t,x)\le k(x)+1/n$. Define
\begin{align*}
k_n(x)=\left\{\begin{aligned}
&k(x)+1/n,& &0\le x\le n,\\
&k(x)+1/n+2(K_0+1-k(x)-1/n)(x-n),& &n<x\le n+1/2,\\
&K_0+1, & &x>n+1/2,
\end{aligned}\right.
  \end{align*}
where $K_0>\max\{\|k\|_{\yy},\|v\|_{\yy}\}$.
It is clear that $k_n\in C(\overline{\mathbb{R}}^+)\cap L^{\yy}(\mathbb{R}^+)$, $k\le k_n\le K_0+1$, $k_n$ is nonincreasing in $n$, $\lim_{n\to\yy}k_n=k$ and $\lim_{x\to\yy}k_n(x)=K_0+1$. Fix $K_1>\max\{\|u_0\|_{\yy}+K_0+1, (K_0+1)/\lambda\}$. By Lemma \ref{l2.7}, there is $0<\ep\ll1$ such that $\ep U_k(x)<K_1$. Let $z_n$ be a solution of
  \bess\left\{\begin{aligned}
&z_t= d\int_{0}^{\yy}J(x-y)z(t,y)\dy-dz+z(k_n(x)-\lambda z), && t>T_n,\ x\ge0,\\
&z(T_n,x)=K_1,  && x\ge0.
 \end{aligned}\right.\eess
 It follows from a comparison argument that
 \bes\label{2.10}
 \ep U_k(x)\le z_n(t,x)\le K_1, ~ ~ u(t,x)\le z_n(t,x) {\rm ~ ~ ~ for ~ }t> T_n, ~ x\in\overline{\mathbb{R}}^+.
 \ees
 By virtue of Lemma \ref{l2.8}, we have
 \bes\label{2.11}
 \lim_{t\to\yy}z_n(t,x)=Z_n(x) {\rm ~ ~ ~ locally ~ uniformly ~ in ~} \overline{\mathbb{R}}^+,
 \ees
where $Z_n$ is the unique bounded positive solution of \eqref{2.8} with $k$ replaced by $k_n$. Since $k_n$ is nonincreasing in $n$, from Lemma \ref{2.7} and $z_n(t,x)\ge \ep U_k(x)$ we see that $Z_n$ is also nonincreasing in $n$ and $Z_n\ge\ep U_k$. Define $Z=\lim_{n\to\yy}Z_n$. Then $Z>0$ and satisfies \eqref{2.8}. The uniqueness implies $Z=U_k$. Thus combining \eqref{2.10} with \eqref{2.11}, we prove our conclusion.
\end{proof}

\section{The dynamics of the problem \eqref{1.2}}\lbl{s3}
\setcounter{equation}{0} {\setlength\arraycolsep{2pt}

Inspired by \cite[Theorem 2.1]{CDLL}, we can utilize the ODE theory, the contraction mapping principle and maximum principle to prove the global existence and uniqueness of solution of  problem \eqref{1.2}, and the details are omitted here.

\begin{theorem}[Existence and uniqueness]\label{t3.1}\, Problem \eqref{1.2} has a unique global solution $(u,h)$. Moreover, $u\in C(\overline D^T)$, $h\in C^1([0,T])$ and $0<u\le \max\{\|u_0\|_{\yy},K\}$ in $D^T_{h}$ for any $T>0$.
 \end{theorem}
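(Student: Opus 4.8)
The plan is to prove Theorem~\ref{t3.1} by the standard local-existence-plus-continuation scheme, adapting the argument sketched in \cite[Theorem 2.1]{CDLL} for problem \eqref{1.1} to the half-space setting of \eqref{1.2}. The essential structure is: (i) reformulate the free-boundary problem as a fixed-point problem for the pair $(u,h)$ on a short time interval $[0,T_0]$; (ii) obtain a local-in-time solution via the contraction mapping principle; (iii) establish the a priori bounds $0<u\le M:=\max\{\|u_0\|_{\yy},K\}$ and the regularity $h\in C^1$; and (iv) use these bounds to continue the solution to all of $[0,T]$ for arbitrary $T>0$.

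First I would fix $h\in\mathbb H_{h_0}^{T_0}$ and treat the equation for $u$ as a nonlocal ODE in $t$ for each spatial point, parametrised by the (known) boundary $h$. Concretely, for given $h$ one rewrites the first equation in \eqref{1.2} in integral form
\[
u(t,x)=u_0(x)+\int_0^t\Big(d\int_0^{h(\tau)}J(x-y)u(\tau,y)\dy-du(\tau,x)+f(\tau,x,u(\tau,x))\Big)\dt,
\]
and, using the local Lipschitz continuity of $f$ from {\bf(F1)}(i) together with $J\in L^\yy$, shows that the right-hand side defines a contraction on a suitable closed ball in $C(\overline D_h^{T_0})$ provided $T_0$ is small. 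This yields a unique $u=u_h$ depending on the frozen boundary $h$. One then substitutes $u_h$ into the right-hand side of the $h'$-equation, namely
\[
h(t)=h_0+\mu\int_0^t\int_0^{h(\tau)}\int_{h(\tau)}^{\yy}J(x-y)u_h(\tau,x)\dy\dx\,\dt,
\]
and shows this map $h\mapsto \tilde h$ is a contraction on a closed subset of $\mathbb H_{h_0}^{T_0}$ for $T_0$ small. The fixed point gives the local solution $(u,h)$. That $h$ is strictly increasing follows because the integrand in the $h'$-equation is positive while $u_h>0$, which in turn comes from Lemma~\ref{l2.1}; the $C^1$ regularity of $h$ is then immediate from the continuity of the integrand.

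Next I would derive the global a priori bounds, which is what permits continuation. The lower bound $u>0$ in $D_h^T$ follows directly from the maximum principle, Lemma~\ref{l2.1}, applied with $c(t,x)=f(t,x,u)/u$ (bounded by {\bf(F1)}(i) on bounded $u$-ranges) and the hypothesis {\bf(H)} that $u_0\ge,\not\equiv0$. For the upper bound, I would use {\bf(F1)}(ii): whenever $u>K$ one has $f<0$, so the constant $\bar u\equiv M=\max\{\|u_0\|_\yy,K\}$ is a supersolution of the $u$-equation (the nonlocal diffusion term applied to a constant is nonpositive since $\int_0^{h(t)}J(x-y)\dy\le1$, and $f(t,x,M)\le0$), whence the comparison principle gives $u\le M$ throughout. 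These bounds are independent of $T_0$, so the local solution can be extended step by step; a standard continuation argument shows the maximal existence time must be $+\yy$, since otherwise the uniform bound on $u$ forces a uniform lower bound on $T_0$ at each restart, contradicting finiteness of the maximal time.

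The main obstacle I anticipate is the coupling between $u$ and $h$ in the fixed-point step: the spatial domain $[0,h(t))$ itself depends on the unknown boundary, so the two contraction estimates must be balanced simultaneously. The delicate point is controlling how the solution $u_h$ depends on $h$ in the double-boundary-flux term defining $\tilde h$, since a perturbation of $h$ changes both the domain of integration and the solution $u_h$ on it. The absence of any regularising effect of the nonlocal operator (noted in the introduction) means one cannot appeal to parabolic smoothing; instead the contraction must be closed purely through the Lipschitz dependence of $u_h$ on $h$ in the sup norm and a short enough $T_0$. Once this balance is achieved on a small interval with a length bound depending only on $M$, $d$, $\mu$, and the Lipschitz constant of $f$, the global a priori estimate above removes any obstruction to iterating, and the proof concludes. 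Since these steps are entirely parallel to \cite[Theorem 2.1]{CDLL}, the details may reasonably be omitted as the authors indicate.
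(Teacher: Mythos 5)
Your proposal is correct and follows exactly the route the paper intends: the paper gives no details for Theorem~\ref{t3.1}, stating only that it is proved via ODE theory, the contraction mapping principle and the maximum principle as in \cite[Theorem 2.1]{CDLL}, which is precisely the local-existence-by-fixed-point, a priori bounds, and continuation scheme you lay out. The only cosmetic slip is the dummy variable in your integral reformulation (the integrand is in $\tau$ but the measure is written as if in $t$); the substance is sound.
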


Obviously, $h'(t)>0$ for $t>0$ by the third equation in \eqref{1.2}. Thus $h_\yy :=\lim_{t\to\yy}h(t)\le \yy$. In the following, we further suppose that $f$ satisfies conditions  {\bf(F2)} and {\bf(F3)}.

For positive constants $d$, $l$ and $a_0$, we define an operator $\mathcal{L}^d_{(0,\,l)}+a_0$ by
 \[\left(\mathcal{L}^d_{(0,\,l)}+a_0\right)\varphi:= d\left(\int_0^l J(x-y)
 \varphi(y)\dy-\varphi\right)+a_0\varphi\ \ \mbox{for}\;\; \phi\in C([0, l]).\]
The generalized principal eigenvalue of $\mathcal{L}^{d}_{(0,\,l)}+a_0$ is given by
 \[\lambda_p(\mathcal{L}^{d}_{(0,\,l)}+a_0):=\inf\{\lambda\in\mathbb R: (\mathcal{L}^{d}_{(0,\,l)}+a_0) \phi\leq\lambda\phi \mbox{ in $[0, l]$ for some } \phi\in C([0, l]), \;\phi>0\}.\]
Please see \cite[Proposition 3.4]{CDLL} for some useful properties of $\lambda_p(\mathcal{L}^{d}_{(0,\,l)}+a_0)$.
\begin{lemma}\label{l3.1}
If $h_{\yy}<\yy$, then $\lambda_p(\mathcal{L}^d_{(0,\,h_{\yy})}+f'(0))\le0$ and $\lim_{t\to\yy}\|u(t,\cdot)\|_{C([0,h(t)])}=0$.
\end{lemma}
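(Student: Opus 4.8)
The plan is to prove the two assertions in turn: the eigenvalue inequality by contradiction, and the decay by a zero-extension comparison.

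\textbf{The eigenvalue inequality.} First I would suppose, for contradiction, that $\lambda_p(\mathcal{L}^d_{(0,\,h_\yy)}+f'(0))>0$. Since $\lambda_p(\mathcal{L}^d_{(0,\,l)}+f'(0))$ is continuous and strictly increasing in $l$ (by \cite[Proposition 3.4]{CDLL}), I can fix $\ell\in(0,h_\yy)$ with $\lambda_p(\mathcal{L}^d_{(0,\,\ell)}+f'(0))>0$ and, crucially, with $h_\yy-\ell$ as small as I please. Because $h(t)\nearrow h_\yy$, there is $T_0$ with $h(t)>\ell$ for all $t\ge T_0$; discarding the nonnegative dispersal contribution over $[\ell,h(t)]$ then shows that the restriction of $u$ to $[0,\ell]$ is a supersolution of the fixed-boundary problem $w_t=d\int_0^\ell J(x-y)w\,dy-dw+f(w)$ on $[0,\ell]$. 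Comparing (via the comparison principle following Lemma \ref{l2.1}) with the solution of this problem started from a small positive datum below $u(T_0,\cdot)$, and invoking its convergence to the unique positive steady state $\bar u_\ell$ — which exists precisely because $\lambda_p>0$, by the $\mathcal{L}^d$-analogue of Lemma \ref{l2.6} — I obtain $\liminf_{t\to\yy}u(t,x)\ge\bar u_\ell(x)$, hence a uniform bound $u(t,x)\ge c_0>0$ on $[0,\ell]$ for all large $t$.

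\textbf{Forcing the boundary to infinity.} Next I would feed this interior lower bound into the free-boundary law. Restricting the outer integral to $[0,\ell]$, using $u\ge c_0$, and substituting $z=y-x$ with the evenness of $J$, I get
\[
h'(t)\ge \mu c_0\int_0^{\ell}\int_{h(t)-x}^{\yy}J(z)\,dz\,dx
\]
for large $t$. Since $J(0)>0$ and $J$ is continuous, $J>0$ on some interval $(0,r_0)$; having chosen $\ell$ at the outset so that $h_\yy-\ell<r_0$, I can guarantee $h(t)-x\in(0,r_0)$ for $x$ near $\ell$, so the double integral is bounded below by a fixed positive constant uniformly for all large $t$. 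Thus $h'(t)\ge\kappa>0$ eventually, which forces $h(t)\to\yy$ and contradicts $h_\yy<\yy$. This establishes $\lambda_p(\mathcal{L}^d_{(0,\,h_\yy)}+f'(0))\le0$.

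\textbf{Decay of the solution.} For the second assertion I would extend $u$ by zero to the fixed interval $[0,h_\yy]$ and check that the extension is a subsolution of the problem $\bar w_t=d\int_0^{h_\yy}J(x-y)\bar w\,dy-d\bar w+f(\bar w)$: on $[0,h(t))$ the equation holds with equality, because the vanishing tail contributes nothing to the dispersal integral, while on $(h(t),h_\yy]$ the extension vanishes and the right-hand side is nonnegative since $f(0)=0$. Comparing with the solution $\bar w$ of this problem started from the constant $\max\{\|u_0\|_\yy,K\}$, and using the $\mathcal{L}^d$-analogue of Lemma \ref{l2.6}(2) — which yields $\bar w\to0$ uniformly on $[0,h_\yy]$ precisely because now $\lambda_p(\mathcal{L}^d_{(0,\,h_\yy)}+f'(0))\le0$ — I conclude $\|u(t,\cdot)\|_{C([0,h(t)])}\to0$.

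\textbf{Main obstacle.} The delicate point is the flux estimate in the second step: since $u(t,h(t))=0$, the integrand is small near the moving boundary, so I must place the interior lower bound close enough to $h_\yy$ that the kernel $J$ is still positive over the relevant gap $h(t)-x$; this is exactly why $J(0)>0$ together with the continuity of $\lambda_p$ in $l$ (allowing $\ell\uparrow h_\yy$ while keeping $\lambda_p>0$) are both indispensable. A secondary technical nuisance, owing to the absence of any regularizing effect, is justifying the zero-extension comparison; I would handle this through the integral (mild) formulation together with the maximum principle of Lemma \ref{l2.1}.
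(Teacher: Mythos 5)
Your argument is correct and follows essentially the same route as the paper's proof: a contradiction via the fixed-boundary problem on an interval $[0,\ell]$ with $h_\infty-\ell$ small enough that $J(0)>0$ forces a uniform positive lower bound on $h'(t)$, and then a comparison on $[0,h_\infty]$ with \cite[Proposition 3.5]{CDLL} for the decay. The only cosmetic differences (starting the supersolution from a constant rather than the zero-extended initial datum, and writing the flux bound via the substitution $z=y-x$ instead of restricting $y$ to $[h_\infty,h_\infty+\ep_0/2]$) do not change the argument.
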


\begin{proof} We first prove $\lambda_p(\mathcal{L}^d_{(0,\,h_{\yy})}+f'(0))\le0$. Arguing indirectly, suppose $\lambda_p(\mathcal{L}^d_{(0,\,h_{\yy})}+f'(0))>0$. By the condition {\bf(J)}, there are $\ep_0$, $\sigma_0>0$ such that $J(x)>\sigma_0$ for $|x|\le \ep_0$. Due to the continuity of $\lambda_p(\mathcal{L}^d_{(0,\,l)}+f'(0))$ about $l$, there exists $0<\ep<\ep_0/2$ such that $\lambda_p(\mathcal{L}^d_{(0,\,h_{\yy}-\ep)}+f'(0))>0$. For such $\ep$, there is $T>0$ such that $h(t)>h_{\yy}-\ep$ for all $t\ge T$. Let $w$ be the unique solution of
\bess\left\{\begin{aligned}
&w_t= d\int_{0}^{h_{\yy}-\ep}J(x-y)w(t,y)\dy-dw+f(w), && t>T,\ 0\le x\le h_{\yy}-\ep,\\
&w(T,x)=u(T,x)  && 0\le x\le h_{\yy}-\ep.
 \end{aligned}\right.\eess
In view of $\lambda_p(\mathcal{L}^d_{(0,\,h_{\yy}-\ep)}+f'(0))>0$ and \cite[Proposition 3.5]{CDLL}, we see that $w(t,x)$ converges to a positive steady state $W(x)$ uniformly in $[0,h_{\yy}-\ep]$ as $t\to\yy$. Moreover, it follows from a simple comparison argument that $u(t,x)\ge w(t,x)$ for $t\ge T$ and $x\in [0,h_{\yy}-\ep]$. So there exists $T_1>T$ such that $u(t,x)\ge \frac{1}{2} W(x)$ for $t\ge T_1$ and $x\in [0,h_{\yy}-\ep]$. Therefore, for $t>T_1$,
  \bess
h'(t)&=&\mu\dd\int_{0}^{h(t)}\int_{h(t)}^{\infty}
J(x-y)u(t,x)\dy\dx\ge\mu\dd\int_{h_{\yy}-\frac{\ep_0}{2}}^{h_{\yy}-\ep}\int_{h_{\yy}}^{h_{\yy}+\frac{\ep_0}{2}}
J(x-y)u(t,x)\dy\dx\\
&\ge&\mu\dd\int_{h_{\yy}-\frac{\ep_0}{2}}^{h_{\yy}-\ep}\int_{h_{\yy}}^{h_{\yy}+\frac{\ep_0}{2}}
\sigma_0 \frac{1}{2} W(x)\dy\dx>0,
\eess
which contradicts to $h_{\yy}<\yy$. And so $\lambda_p(\mathcal{L}^d_{(0,\,h_{\yy})}+f'(0))\le0$.

Next we prove $\lim_{t\to\yy}\|u(t,\cdot)\|_{C([0,h(t)])}=0$. Let $\bar{w}$ be the unique solution of
\bess\left\{\begin{aligned}
&\bar w_t= d\int_{0}^{h_{\yy}}J(x-y)\bar w(t,y)\dy-d\bar w+f(\bar w),\;\; t>0,\ 0\le x\le h_{\yy},\\
&\bar w(0,x)=u(0,x), ~ 0\le x\le h_0; ~ ~ \bar{w}(0,x)=0, ~ h_0\le x\le h_{\yy}.
 \end{aligned}\right.\eess
Since $\lambda_p(\mathcal{L}^d_{(0,\,h_{\yy})}+f'(0))\le0$, we see from \cite[Proposition 3.5]{CDLL} that $\lim_{t\to\yy}\bar{w}=0$ in $C([0,h_{\yy}])$.
By a comparison argument again, we get $u\le \bar{w}$ in $D^\yy_h$. Hence $\lim_{t\to\yy}\|u(t,\cdot)\|_{C([0,h(t)])}=0$.
\end{proof}

\begin{lemma}\label{l3.2}\, Let $U\in C(\overline{\mathbb{R}}^+)$ be the unique bounded positive solution of \eqref{2.4}. If $h_{\yy}=\yy$, then $\lim_{t\to\yy}u(t,x)=U(x)$ locally uniformly in $[0,\yy)$.
\end{lemma}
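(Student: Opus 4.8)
The plan is to obtain the convergence by squeezing $u$ between a supersolution defined on the whole half-line and a family of subsolutions defined on expanding bounded intervals, relying on the longtime behaviour already recorded in Lemma \ref{l2.5} (fixed boundary on $\overline{\mathbb{R}}^+$) and on the truncated steady states $u_l$ from the proof of Lemma \ref{l2.4}.

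For the upper estimate I would extend $u$ by zero, setting $\tilde u(t,x)=u(t,x)$ for $0\le x\le h(t)$ and $\tilde u(t,x)=0$ for $x>h(t)$. Since $u(t,h(t))=0$ and $f(0)=0$, a direct check shows that for $0\le x<h(t)$ the function $\tilde u$ satisfies the equation in \eqref{2.6} with equality, while for $x\ge h(t)$ one has $\tilde u_t=0\le d\int_0^{\yy}J(x-y)\tilde u(t,y)\dy=d\int_0^{h(t)}J(x-y)u(t,y)\dy$; hence $\tilde u$ is a subsolution of \eqref{2.6}. Taking $w$ to be the solution of \eqref{2.6} with constant initial datum $w_0=\max\{\|u_0\|_{\yy},K\}\ge\tilde u(0,\cdot)$, the comparison principle gives $u=\tilde u\le w$ in $D^\yy_h$, and Lemma \ref{l2.5} yields $\lim_{t\to\yy}w(t,x)=U(x)$ locally uniformly. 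Therefore $\limsup_{t\to\yy}u(t,x)\le U(x)$ locally uniformly in $\overline{\mathbb{R}}^+$.

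For the lower estimate I would use $h_\yy=\yy$. Fix $l>0$ large enough that $\lambda_p(\mathcal{L}^d_{(0,\,l)}+f'(0))>0$ (possible because this eigenvalue increases to $f'(0)>0$ as $l\to\yy$, see \cite[Proposition 3.4]{CDLL}), so that the steady state $u_l$ of \eqref{x.3} exists. Since $h_\yy=\yy$, there is $T_l$ with $h(t)>l$ for all $t\ge T_l$; then for $t\ge T_l$ and $0\le x\le l$ we have $u_t\ge d\int_0^{l}J(x-y)u(t,y)\dy-du+f(u)$, so $u$ is a supersolution on $[0,l]$ of the bounded-interval problem \eqref{x.3}. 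As $u(T_l,\cdot)$ is continuous and positive on the compact set $[0,l]\subset[0,h(T_l))$, I can choose $0<\ep\ll1$ so that the solution $\underline w$ of the $[0,l]$ problem starting from $\ep\phi_l$, with $\phi_l$ the principal eigenfunction of $\mathcal{L}^d_{(0,\,l)}+f'(0)$, lies below $u(T_l,\cdot)$. The comparison principle then gives $u(t,x)\ge\underline w(t,x)$ for $t\ge T_l$, $0\le x\le l$, and the convergence established in the proof of Lemma \ref{l2.4} (cf. \cite[Proposition 3.5]{CDLL}) gives $\underline w(t,\cdot)\to u_l$ uniformly on $[0,l]$. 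Hence $\liminf_{t\to\yy}u(t,x)\ge u_l(x)$ uniformly on $[0,l]$.

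Finally, letting $l\to\yy$ and using that $u_l\nearrow U$ together with the continuity of $U$ and Dini's theorem, I obtain $\liminf_{t\to\yy}u(t,x)\ge U(x)$ locally uniformly in $\overline{\mathbb{R}}^+$, which combined with the upper estimate yields the claim. The main obstacle I anticipate is the lower bound: one must produce a genuine positive subsolution on the growing intervals and verify that the comparison stays valid for all $t\ge T_l$ after the free boundary has passed $l$ (i.e.\ that $u$ is a supersolution of the $[0,l]$ problem for all such $t$); the positivity of $\lambda_p(\mathcal{L}^d_{(0,\,l)}+f'(0))$ for large $l$ and the uniform-in-$t$ lower barrier $\ep\phi_l$ are the quantitative inputs that make the argument close.
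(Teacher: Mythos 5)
Your proposal is correct and follows essentially the same route as the paper: the upper bound comes from comparing $u$ with the solution $w$ of \eqref{2.6} started from a large constant and invoking Lemma \ref{l2.5}, and the lower bound comes from waiting until $h(t)>l$, comparing with the fixed-boundary problem \eqref{x.3} on $[0,l]$ (the paper initializes the subsolution at $u(T,\cdot)$ rather than at $\ep\phi_l$, an immaterial variation), and then letting $l\to\yy$ using $u_l\nearrow U$.
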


\begin{proof}\, Firstly, let $w$ be the unique solution of \eqref{2.6} with initial value $w(0,x)=\|u_0\|_{\yy}$. It follows from Lemma \ref{l2.5} that $\lim_{t\to\yy}w(t,x)=U(x)$ locally uniformly in $\overline{\mathbb{R}}^+$. By a comparison consideration, $u\le w$ in $D^\yy_h$. Thus $\limsup_{t\to\yy}u(t,x)\le U(x)$ locally uniformly in $\overline{\mathbb{R}}^+$.

By \cite[Proposition 3.4]{CDLL}, we know that for large $l$, $\lambda_p(\mathcal{L}^d_{(0,\,l)}+f'(0))>0$. For such large $l>0$, there exists $T>0$ such that $h(t)>l$ for $t\ge T$. Consider the following problem
\bess\left\{\begin{aligned}
&\underline{u}_t= d\int_{0}^{l}J(x-y)\underline{u}(t,y)\dy-d\underline{u}+f(\underline{u}), && t>T,\ 0\le x\le l,\\
&\underline{u}(T,x)=u(T,x)  && 0\le x\le l.
 \end{aligned}\right.\eess
Again, by the comparison principle, $u(t,x)\ge \underline{u}(t,x)$ for $t\ge T$ and $0\le x\le l$. Let $u_l$ be the unique positive solution of \eqref{x.3}. Then $\lim_{t\to\yy}\underline{u}(t,x)=u_l(x)$ uniformly in $[0,l]$, and $\lim_{l\to\yy}u_l=U$ locally uniformly in $\overline{\mathbb{R}}^+$. Hence, $\liminf_{t\to\yy}u(t,x)\ge U(x)$ locally uniformly in $\overline{\mathbb{R}}^+$. \end{proof}

Due to the above two lemmas, we immediately obtain the following conclusion.
\begin{theorem}[Spreading-vanishing dichotomy]\label{t3.2} One of the following alternatives happens for \eqref{1.2}:

\sk{\rm(1)}\, \underline{Spreading:} $h_{\yy}=\yy$ and $\lim_{t\to\yy}u(t,x)=U(x)$ locally uniformly in $\overline{\mathbb{R}}^+$;

\sk{\rm(2)}\, \underline{Vanishing:} $h_{\yy}<\yy$ and $\lim_{t\to\yy}\|u(t,\cdot)\|_{C([0,h(t)])}=0$.
\end{theorem}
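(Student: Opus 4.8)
The plan is to obtain the dichotomy as an immediate case analysis on the finiteness of $h_\yy$, with the two preceding lemmas supplying the content of each alternative. First I would record the structural fact that makes the split clean: by Theorem~\ref{t3.1} we have $u>0$ in $D^T_h$, and since $J(0)>0$ with $J$ continuous, the third equation of \eqref{1.2} forces $h'(t)>0$ for all $t>0$. Hence $h$ is strictly increasing and the limit $h_\yy=\lim_{t\to\yy}h(t)$ exists in $(0,\yy]$. This guarantees that the two cases $h_\yy=\yy$ and $h_\yy<\yy$ are mutually exclusive and jointly exhaustive, which is precisely what a dichotomy requires.

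Next I would dispatch the two cases directly. If $h_\yy=\yy$, then Lemma~\ref{l3.2} applies verbatim and yields $\lim_{t\to\yy}u(t,x)=U(x)$ locally uniformly in $[0,\yy)$, which is alternative~(1). If instead $h_\yy<\yy$, then Lemma~\ref{l3.1} gives both $\lambda_p(\mathcal{L}^d_{(0,\,h_\yy)}+f'(0))\le 0$ and $\lim_{t\to\yy}\|u(t,\cdot)\|_{C([0,h(t)])}=0$; the decay statement is exactly alternative~(2). Combining the two cases exhausts all possibilities and finishes the argument.

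Since all the analytic work has already been absorbed into Lemmas~\ref{l3.1} and \ref{l3.2}, I do not expect any genuine obstacle at the level of this theorem: it is a formal corollary of those two lemmas together with the monotonicity of $h$. The substantive difficulties lie upstream—in particular, the threshold characterization via the principal eigenvalue in Lemma~\ref{l3.1} (which is what rules out the population surviving on a bounded domain while the free boundary stalls) and the squeezing of $u$ between sub- and supersolutions converging to the half-line steady state $U$ from \eqref{2.4} in Lemma~\ref{l3.2}. The only point in the assembly that requires care is confirming that the two cases are truly exhaustive, and this is secured by the strict monotonicity of $h$ noted above.
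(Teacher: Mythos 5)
Your proposal is correct and matches the paper exactly: the paper states that the dichotomy follows immediately from Lemmas~\ref{l3.1} and \ref{l3.2}, having already observed that $h'(t)>0$ forces $h_\yy\le\yy$ to exist, so the two cases are exhaustive. Your only addition is spelling out this assembly explicitly, which is harmless and faithful to the intended argument.
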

Next we discuss when spreading or vanishing happens, and some criteria will be given. Thanks to Lemma \ref{l3.1} and \cite[Proposition 3.4]{CDLL}, we easily derive the following lemma.

\begin{lemma}\label{l3.3}
If $f'(0)\ge d$, then spreading happens for \eqref{1.2}.
\end{lemma}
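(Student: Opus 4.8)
The plan is to show that the hypothesis $f'(0) \ge d$ forces spreading by ruling out the vanishing alternative in Theorem \ref{t3.2}. Since the dichotomy guarantees that either $h_\yy = \yy$ (spreading) or $h_\yy < \yy$ (vanishing), it suffices to prove that vanishing cannot occur. By the contrapositive statement in Lemma \ref{l3.1}, vanishing would imply $\lambda_p(\mathcal{L}^d_{(0,\,h_\yy)}+f'(0)) \le 0$ with $h_\yy < \yy$. Therefore the whole argument reduces to showing that, under $f'(0)\ge d$, the generalized principal eigenvalue $\lambda_p(\mathcal{L}^d_{(0,\,l)}+f'(0))$ is strictly positive for \emph{every} finite $l>0$, which contradicts the consequence of vanishing and completes the proof.

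The key step is thus a sign computation for $\lambda_p(\mathcal{L}^d_{(0,\,l)}+a_0)$ with $a_0 = f'(0)$. First I would invoke the variational characterization of this eigenvalue (analogous to the one used for $\lambda_p(\mathcal{L}^N_{(0,\,l)}+a_0)$ in the proof of Lemma \ref{l2.3}), namely
\[
\lambda_p(\mathcal{L}^d_{(0,\,l)}+a_0) = \sup_{0\ne\psi\in L^2([0,l])}\frac{d\int_0^l\!\int_0^l J(x-y)\psi(y)\psi(x)\,\dy\,\dx - d\int_0^l \psi^2(x)\,\dx}{\int_0^l \psi^2(x)\,\dx} + a_0.
\]
Testing with the constant function $\phi\equiv 1$ gives the lower bound
\[
\lambda_p(\mathcal{L}^d_{(0,\,l)}+a_0) \ge \frac{d\int_0^l\!\int_0^l J(x-y)\,\dy\,\dx}{l} - d + a_0.
\]
Because $J\ge 0$, $J(0)>0$, and $J$ is even with $\int_\R J = 1$, the inner integral $\int_0^l J(x-y)\,\dy$ is strictly positive for every $x\in[0,l]$, so the double integral is strictly positive and the averaged first term is strictly larger than $0$; hence with $a_0=f'(0)\ge d$ the right-hand side is strictly positive for each finite $l$. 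I expect this to yield $\lambda_p(\mathcal{L}^d_{(0,\,l)}+f'(0)) > 0$ for all $l>0$.

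The main obstacle is making the strict inequality airtight: the crude bound above only gives $\ge a_0 - d \ge 0$ at face value, so I must extract genuine strictness from the positivity of the double integral of $J$ over $[0,l]^2$ rather than settling for the non-strict estimate $a_0 - d$. The cleanest route is to note that $\int_0^l\!\int_0^l J(x-y)\,\dy\,\dx > 0$ strictly—guaranteed since $J$ is continuous with $J(0)>0$, so $J$ is bounded below by a positive constant on a neighborhood of the diagonal—whence the test-function bound already exceeds $a_0 - d \ge 0$, giving $\lambda_p > 0$. Alternatively, and perhaps more transparently, I would combine the monotonicity and limit properties established for the Neumann version in Lemma \ref{l2.3} with the known comparison between $\mathcal{L}^d$ and $\mathcal{L}^N$: since the Neumann operator subtracts only $dj(x)\le d$ rather than the full $d$, one has $\lambda_p(\mathcal{L}^d_{(0,\,l)}+a_0) \le \lambda_p(\mathcal{L}^N_{(0,\,l)}+a_0)$, and the limit $\lim_{l\to 0}\lambda_p(\mathcal{L}^N_{(0,\,l)}+a_0)=a_0 - d/2$ together with strict monotonicity shows the Dirichlet-type eigenvalue stays positive once $a_0\ge d$. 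Either way, once strict positivity of $\lambda_p(\mathcal{L}^d_{(0,\,h_\yy)}+f'(0))$ is in hand for the (finite) value $h_\yy$, it directly contradicts Lemma \ref{l3.1}, forcing $h_\yy = \yy$ and hence spreading.
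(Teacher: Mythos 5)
Your main argument is correct and is essentially the paper's: the paper obtains Lemma \ref{l3.3} from Lemma \ref{l3.1} together with \cite[Proposition 3.4]{CDLL}, whose strict monotonicity in $l$ and limit $\lim_{l\to 0}\lambda_p(\mathcal{L}^d_{(0,\,l)}+a_0)=a_0-d$ give $\lambda_p(\mathcal{L}^d_{(0,\,l)}+f'(0))>f'(0)-d\ge 0$ for every finite $l$, contradicting the necessary condition $\lambda_p(\mathcal{L}^d_{(0,\,h_\yy)}+f'(0))\le 0$ for vanishing. Your primary route --- the variational characterization tested against $\phi\equiv 1$, with strictness extracted from $\int_0^l\!\int_0^l J(x-y)\,\dy\,\dx>0$ (which holds since $J$ is continuous with $J(0)>0$) --- reproves exactly this consequence directly and is sound.

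One caveat: your ``alternative'' route does not work as written. The comparison $\lambda_p(\mathcal{L}^d_{(0,\,l)}+a_0)\le\lambda_p(\mathcal{L}^N_{(0,\,l)}+a_0)$ is correct in direction (the Neumann-type operator subtracts the smaller quantity $dj(x)\le d$), but it is an \emph{upper} bound on the Dirichlet-type eigenvalue; positivity of the Neumann eigenvalue, which indeed follows from $a_0-d/2\ge d/2>0$ and Lemma \ref{l2.3}, therefore says nothing about the sign of $\lambda_p(\mathcal{L}^d_{(0,\,l)}+a_0)$. If you want the monotonicity-plus-small-$l$-limit argument, you must apply it to the Dirichlet-type operator itself, using its own limit $a_0-d$ from \cite[Proposition 3.4]{CDLL}, not the Neumann limit $a_0-d/2$. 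Since your first route already closes the proof, this is a repairable side remark rather than a fatal gap.
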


If $f'(0)<d$, by \cite[Proposition 3.4]{CDLL} there is a unique $\ell^*_D>0$ such that $\lambda_p(\mathcal{L}^d_{(0,\,\ell^*_D)}+f'(0))=0$ and $\lambda_p(\mathcal{L}^d_{(0,\,l)}+f'(0))(l-\ell^*_D)>0$ for all $l>0$.

\begin{lemma}\label{l3.4}  Suppose $f'(0)<d$. If $h_0\ge\ell^*_D$, then spreading happens for \eqref{1.2}. Moreover, if $h_\yy<\yy$, then $h_\yy\le\ell^*_D$.
\end{lemma}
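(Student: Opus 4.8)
The plan is to deduce both assertions from material already established: the spreading--vanishing dichotomy (Theorem \ref{t3.2}), the vanishing criterion of Lemma \ref{l3.1}, the strict monotonicity $h'(t)>0$ for $t>0$ (so that $h$ is increasing and $h_\yy>h_0$), and the defining sign property of $\ell^*_D$, namely that $\lambda_p(\mathcal{L}^d_{(0,\,l)}+f'(0))$ is negative for $l<\ell^*_D$, vanishes at $l=\ell^*_D$, and is positive for $l>\ell^*_D$. No new analytic estimate is needed; the argument is a logical combination of these facts.

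First I would establish the spreading claim by contradiction. Suppose $h_0\ge\ell^*_D$ but spreading fails; then the dichotomy of Theorem \ref{t3.2} forces vanishing, so $h_\yy<\yy$. Because $h$ is strictly increasing we have $h_\yy>h_0\ge\ell^*_D$, hence $h_\yy>\ell^*_D$, and the sign property of $\ell^*_D$ then yields $\lambda_p(\mathcal{L}^d_{(0,\,h_\yy)}+f'(0))>0$. This contradicts Lemma \ref{l3.1}, which asserts $\lambda_p(\mathcal{L}^d_{(0,\,h_\yy)}+f'(0))\le0$ whenever $h_\yy<\yy$. Therefore vanishing cannot occur and spreading must happen.

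For the second assertion I would argue directly. Assuming $h_\yy<\yy$, Lemma \ref{l3.1} gives $\lambda_p(\mathcal{L}^d_{(0,\,h_\yy)}+f'(0))\le0$. If instead $h_\yy>\ell^*_D$ held, the sign property would force $\lambda_p(\mathcal{L}^d_{(0,\,h_\yy)}+f'(0))>0$, a contradiction; hence $h_\yy\le\ell^*_D$.

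I do not expect a genuine obstacle, since the statement is essentially a bookkeeping consequence of Theorem \ref{t3.2}, Lemma \ref{l3.1}, and the monotonicity of $l\mapsto\lambda_p(\mathcal{L}^d_{(0,\,l)}+f'(0))$ recorded in \cite{CDLL}. The only point meriting care is the borderline case $h_0=\ell^*_D$ in the spreading claim: there $h_0\ge\ell^*_D$ alone does not give $h_\yy>\ell^*_D$, so one must invoke the strict inequality $h_\yy>h_0$ coming from $h'(t)>0$ for $t>0$. With that observation in place both parts follow at once.
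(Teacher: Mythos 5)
Your proposal is correct and is precisely the argument the paper intends: Lemma \ref{l3.4} is stated without proof as an immediate consequence of Lemma \ref{l3.1}, the dichotomy of Theorem \ref{t3.2}, and the sign property $\lambda_p(\mathcal{L}^d_{(0,\,l)}+f'(0))(l-\ell^*_D)>0$. Your observation that the borderline case $h_0=\ell^*_D$ requires the strict inequality $h_\yy>h_0$ coming from $h'(t)>0$ is exactly the one point of care, and you handle it correctly.
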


For later discussion, we now give two comparison principles which can be proved by analogous considerations with \cite[Theorem 3.1]{CDLL} and \cite[Lemma 3.4]{DN213}.

\begin{theorem}[Comparison principle]\label{t3.3}\, Let conditions {\bf (J)} and {\bf (F1)} hold, and $\bar{h}\in C^1([0,T])$, $\bar{u},\bar{u}_t\in C(\overline{D}^T_{\bar h})$ satisfy
\bes\label{3.1}
\left\{\begin{aligned}
&\bar u_t\ge d\dd\int_{0}^{\bar h(t)}J(x-y)\bar u(t,y)\dy-d\bar u+f(t,x,\bar u), && 0<t\le T,~0\le x<\bar h(t),\\
&\bar u(t,\bar h(t))\ge0,&& 0<t\le T,\\
&\bar h'(t)\ge\mu\dd\int_{0}^{\bar h(t)}\int_{\bar h(t)}^{\infty}
J(x-y)\bar u(t,x)\dy\dx,&& 0<t\le T,\\
&\bar h(0)\ge h_0,\;\; \bar u(0,x)\ge u_0(x),&& x\in[0,h_0].
\end{aligned}\right.
 \ees
 Then the solution $(u,h)$ of \eqref{1.2} satisfies
 \[u(t,x)\le \bar{u}(t,x), ~ ~ h(t)\le\bar{h}(t) ~ ~ {\rm for }~ 0<t\le T, ~ 0\le x\le h(t).\]
\end{theorem}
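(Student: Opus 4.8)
The plan is to prove the comparison principle (Theorem~\ref{t3.3}) by constructing the difference between the supersolution $(\bar u,\bar h)$ and the genuine solution $(u,h)$ of \eqref{1.2}, and then applying the maximum principle of Lemma~\ref{l2.1} on a suitable domain. The central difficulty is that the two problems are posed on different moving domains $D^T_{\bar h}$ and $D^T_h$, so I cannot directly subtract the two equations; I must first establish the boundary ordering $h(t)\le\bar h(t)$ and then deploy the maximum principle on the smaller domain $D^T_h$. First I would argue that $h(t)\le\bar h(t)$ for all $t\in(0,T]$. Suppose not, and let $t_1=\inf\{t\in(0,T]:h(t)>\bar h(t)\}$; by continuity and the ordering of initial data $\bar h(0)\ge h_0=h(0)$ one has $h(t_1)=\bar h(t_1)$ and $h(t)\le\bar h(t)$ on $[0,t_1]$. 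On this interval $u$ and $\bar u$ live on comparable domains, so I can compare them via the maximum principle and obtain $u\le\bar u$ there, from which the inequality $h'(t_1)\le\bar h'(t_1)$ forces a contradiction with $h$ overtaking $\bar h$ at $t_1$.

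More precisely, on any interval where $h\le\bar h$ I would set $w=\bar u-u$ and extend $u$ by zero outside $[0,h(t)]$. Since $\bar u\ge 0\ge u$ is not automatic on the gap $[h(t),\bar h(t)]$, the natural extension is to compare on the domain $D^T_h$ of the true solution: there $w=\bar u-u$ satisfies, using \eqref{3.1} for $\bar u$ and \eqref{1.2} for $u$,
\[
w_t\ge d\int_0^{h(t)}J(x-y)w(t,y)\dy+\Big(d\int_{h(t)}^{\bar h(t)}J(x-y)\bar u(t,y)\dy\Big)+c(t,x)w,
\]
where $c(t,x)=\big(f(t,x,\bar u)-f(t,x,u)\big)/(\bar u-u)$ (set to $0$ where $\bar u=u$) is bounded in $L^\infty(D^T_h)$ by the local Lipschitz hypothesis {\bf(F1)}(i) together with the uniform bound $0<u,\bar u\le M$ from Theorem~\ref{t3.1}. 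Because $\bar u\ge 0$ and $h\le\bar h$, the extra integral term is nonnegative, so $w$ is a supersolution for the linear inequality of Lemma~\ref{l2.1}. The boundary data give $w(t,h(t))=\bar u(t,h(t))-0\ge 0$ and $w(0,x)=\bar u(0,x)-u_0(x)\ge 0$, so Lemma~\ref{l2.1} yields $w\ge 0$, i.e.\ $u\le\bar u$ on $\overline D^T_h$.

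The main obstacle is precisely the interplay between the domain ordering and the density ordering: the conclusion $h\le\bar h$ is needed to guarantee that the cross term above is nonnegative, yet $h\le\bar h$ is itself proved using $u\le\bar u$ at the boundary contact. I would resolve this chicken-and-egg coupling with the continuity/first-crossing-time argument sketched above, which lets the density comparison run on the closed interval $[0,t_1]$ where domain ordering already holds, and then upgrades to the boundary comparison at $t_1$. An auxiliary subtlety is that Lemma~\ref{l2.1} as stated requires $\psi,\psi_t\in C(\overline D^T_h)$; here $w_t$ may fail to be continuous across $x=h(t)$ because of the zero-extension of $u$, so I would either work with the integral (Duhamel) formulation of the equations or approximate by strict supersolutions $\bar h_\eta$, $\bar u_\eta$ (e.g.\ replacing $\mu$ by $\mu+\eta$ and perturbing the initial data upward by $\eta$) to make the contact at $t_1$ impossible, and then pass to the limit $\eta\to 0$. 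Since the analogous arguments are carried out in \cite[Theorem~3.1]{CDLL} and \cite[Lemma~3.4]{DN213}, I would invoke those techniques for the routine estimates and concentrate the exposition on the first-crossing-time reduction, which is the only genuinely new bookkeeping.
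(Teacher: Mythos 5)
Your proposal follows essentially the same route as the paper, which gives no self-contained proof but delegates to \cite[Theorem 3.1]{CDLL} and \cite[Lemma 3.4]{DN213}: a first-crossing-time argument for the boundaries combined with the maximum principle (Lemma \ref{l2.1}) for $w=\bar u-u$ on the smaller domain $D^T_h$, where the cross term $d\int_{h(t)}^{\bar h(t)}J(x-y)\bar u(t,y)\dy$ is discarded as nonnegative and the difference quotient of $f$ (together with $-d$) is absorbed into a bounded coefficient $c(t,x)$ via {\bf(F1)}(i) and the a priori bounds of Theorem \ref{t3.1}. Two details deserve more care than you give them. First, the nonnegativity of $\bar u$ on the gap $[h(t),\bar h(t)]$, which you use to drop the cross term, is not among the stated hypotheses (they only control $\bar u$ at $x=\bar h(t)$ and at $t=0$ on $[0,h_0]$); it must be obtained by a preliminary application of Lemma \ref{l2.1} to $\bar u$ alone on $D^T_{\bar h}$, which in turn requires $\bar u(0,\cdot)\ge 0$ on all of $[0,\bar h(0)]$ — exactly as in the cited references, this should be made explicit. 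Second, your proposed perturbation acts on the wrong object: replacing $\mu$ by $\mu+\eta$ and lifting the initial data of the \emph{given} pair $(\bar u,\bar h)$ does not produce a strict supersolution, since the differential inequalities in \eqref{3.1} would have to be re-verified for the perturbed functions. The standard (and correct) device is to perturb the \emph{solution} downward — replace $(\mu,h_0,u_0)$ by $(\mu(1-\eta),h_0(1-\eta),u_0^\eta)$ with $u_0^\eta\le u_0$ — so that the first contact $h_\eta(t_1)=\bar h(t_1)$ yields the strict inequality $h_\eta'(t_1)=\mu(1-\eta)\int_0^{h_\eta(t_1)}\int_{h_\eta(t_1)}^{\yy}J(x-y)u_\eta(t_1,x)\dy\dx<\bar h'(t_1)$, a genuine contradiction, after which one lets $\eta\to0$ using continuous dependence. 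With those two corrections your argument is the one intended by the paper.
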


\begin{theorem}[Comparison principle]\label{t3.4}\, Let conditions {\bf (J)} and {\bf (F1)} hold, and $\bar{h}\in C^1([0,T])$, $r\in C([0,T])$ be nondecreasing and $0\le r(t)<\bar{h}(t)$. If $\bar{u},\bar{u}_t\in C(\overline{D}^T_{\bar{h}})$ satisfy
\bes\label{3.2}
\left\{\begin{aligned}
&\bar u_t\ge d\dd\int_{0}^{\bar h(t)}J(x-y)\bar u(t,y)\dy-d\bar u+f(t,x,\bar u), && 0<t\le T,~r(t)<x<\bar h(t),\\
&\bar u(t,\bar h(t))\ge0,&& 0<t\le T,\\
&\bar h'(t)\ge\mu\dd\int_{0}^{\bar h(t)}\int_{\bar h(t)}^{\infty}
J(x-y)\bar u(t,x)\dy\dx,&& 0<t\le T,\\
&\bar u(t,x)\ge u(t,x),&& 0\le t\le T, ~0\le x\le r(t),\\
&\bar h(0)\ge h_0,\;\; \bar u(0,x)\ge u_0(x),&& x\in[0, h_0],
\end{aligned}\right.
 \ees
then the solution $(u,h)$ of \eqref{1.2} satisfies
 \[u(t,x)\le \bar{u}(t,x), ~ ~ h(t)\le\bar{h}(t) ~ ~ {\rm for }~ 0<t\le T, ~ 0\le x\le h(t).\]
\end{theorem}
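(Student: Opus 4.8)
The statement to prove is Theorem~\ref{t3.4}, a comparison principle with a moving inner boundary $r(t)$. The plan is to reduce it to the simpler comparison principle of Theorem~\ref{t3.3} together with the maximum principle of Lemma~\ref{l2.1}. The key structural observation is that on the region where $x\le r(t)$ the hypothesis already forces $\bar u\ge u$, so the real work is confined to the outer region $r(t)<x<\bar h(t)$, where $(\bar u,\bar h)$ is a genuine supersolution in the sense of \eqref{3.1}. First I would set $w=\bar u-u$ and aim to show $w\ge0$ throughout $\overline D^T_h$ and $h(t)\le\bar h(t)$; the coupling between the density inequality and the free-boundary inequality is what makes this delicate.

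\smallskip
\textbf{Step 1: Controlling the free boundaries.}
I would first argue that $h(t)\le\bar h(t)$ for all $0<t\le T$. The natural device is contradiction combined with continuity: both $h$ and $\bar h$ start with $\bar h(0)\ge h_0=h(0)$, and as long as $h(t)\le\bar h(t)$ holds one can feed the comparison into the boundary ODEs. Specifically, writing $t_*=\inf\{t:\,h(t)>\bar h(t)\}$ and supposing $t_*\le T$, at $t_*$ one has $h(t_*)=\bar h(t_*)$ and one must derive $h'(t_*)\le\bar h'(t_*)$ from the flux integrals in the last-but-one lines of \eqref{1.2} and \eqref{3.2}, using that $\bar u\ge u$ has been established on $[0,h(t_*)]$ up to that time. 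This is exactly the mechanism used in \cite[Theorem 3.1]{CDLL}, and I would invoke it.

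\smallskip
\textbf{Step 2: The density inequality via the maximum principle.}
Granting $h(t)\le\bar h(t)$, I would show $u\le\bar u$ on $\overline D^T_h$. On the inner region $0\le x\le r(t)$ this is the hypothesis in the fourth line of \eqref{3.2}. On the outer region $r(t)<x<h(t)\le\bar h(t)$, subtracting the equation for $u$ from the inequality \eqref{3.2} for $\bar u$ and using the Lipschitz condition {\bf(F1)}(i) to write $f(t,x,\bar u)-f(t,x,u)=c(t,x)\,(\bar u-u)$ with $c\in L^\infty$, the difference $w=\bar u-u$ satisfies a differential inequality of the form
\[
w_t\ge d\int_{0}^{\bar h(t)}J(x-y)w(t,y)\dy+c\,w
\]
once the extra nonnegative terms coming from $u$ living on the smaller interval $[0,h(t)]\subseteq[0,\bar h(t)]$ are discarded. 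Because $w\ge0$ is already known on the inner region $[0,r(t)]$ and at $x=\bar h(t)$ (from $\bar u(t,\bar h(t))\ge0=u(t,\bar h(t))$ when $h(t)=\bar h(t)$), one can apply the maximum principle of Lemma~\ref{l2.1} (adapted to the shifted inner boundary $r(t)$) to conclude $w\ge0$ on the outer region as well.

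\smallskip
\textbf{The main obstacle.}
The hard part is the \emph{simultaneous} handling of the two inequalities: the density comparison $u\le\bar u$ and the boundary comparison $h\le\bar h$ are coupled, since each is used to prove the other. The clean way around this is a bootstrap on a sliding time interval, showing that the set of times where both inequalities hold is open, closed, and nonempty in $[0,T]$; the openness uses the strict-supersolution slack from the free-boundary flux term, while the closedness uses continuity of $u,\bar u,h,\bar h$. I would organize the proof so that Step~1 and Step~2 are run together inside this continuity argument rather than sequentially, exactly paralleling the proofs in \cite[Theorem 3.1]{CDLL} and \cite[Lemma 3.4]{DN213}; since the paper explicitly points to those references, I would state the result and refer to those analogues for the coupled bookkeeping, supplying only the modification needed to accommodate the inner free boundary $r(t)$.
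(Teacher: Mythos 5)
Your proposal matches the paper's treatment: the paper gives no written proof of Theorem \ref{t3.4} at all, stating only that it ``can be proved by analogous considerations with [CDLL, Theorem 3.1] and [DN213, Lemma 3.4]'', and your outline is exactly that standard coupled continuity argument (a simultaneous bootstrap of $u\le\bar u$ via the maximum principle and $h\le\bar h$ via the boundary flux inequality), adapted to the inner region $0\le x\le r(t)$ where the ordering is supplied by hypothesis. Since you too ultimately defer the coupled bookkeeping to those same references, there is no substantive difference to report.
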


It can be easily seen that the above comparison principles are still valid if $\int_{\mathbb{R}} J(x)\dx \ne 1$ and condition {\rm(ii)} in {\bf (F1)} is not satisfied. The similar results hold for the solution $(u,h)$ of \eqref{1.3}. The pair $(\bar{u},\bar{h})$ is usually called an upper solution of \eqref{1.2} or \eqref{1.3}. We can also define a lower solution and derive analogous conclusions by reversing all the inequalities of \eqref{3.1} or \eqref{3.2}. Moreover, it follows from Theorem \ref{t3.3} that the solution $(u,h)$ of \eqref{1.2} or \eqref{1.3} is strictly increasing in $\mu>0$.

\begin{lemma}\label{l3.5} Let $f'(0)<d$. If $h_0<\ell^*_D$, then there exists $\underline{\mu}_D>0$ such that vanishing happens for \eqref{1.2} when  $0<\mu\le\underline{\mu}_D$.
\end{lemma}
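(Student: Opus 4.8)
The plan is to build an upper solution $(\bar u,\bar h)$ of \eqref{1.2} whose free boundary stays bounded, and then close the argument with the comparison principle (Theorem \ref{t3.3}) and the dichotomy (Theorem \ref{t3.2}): a bounded $\bar h$ will force $h_\yy<\yy$, which is precisely the vanishing alternative. Since $h_0<\ell^*_D$, I first fix some $h_1\in(h_0,\ell^*_D)$; by the defining property of $\ell^*_D$ we have $\lambda_1:=\lambda_p(\mathcal{L}^d_{(0,\,h_1)}+f'(0))<0$. I then pick any $\lambda\in(\lambda_1,0)$. By the infimum characterization of $\lambda_p(\mathcal{L}^d_{(0,\,h_1)}+f'(0))$, there exists $\phi\in C([0,h_1])$ with $\phi>0$ and
\[
d\int_{0}^{h_1}J(x-y)\phi(y)\dy-d\phi+f'(0)\phi\le\lambda\phi\quad\mbox{in }[0,h_1].
\]
Working with such a test function (rather than an exact eigenfunction) sidesteps any eigenfunction-existence subtlety and only helps, since an upper solution needs an inequality.

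Next I would set $\ep_0=h_1-h_0>0$ and take
\[
\bar h(t)=h_0+\ep_0\big(1-{\rm e}^{-\gamma t}\big),\qquad \bar u(t,x)=M{\rm e}^{-\gamma t}\phi(x)\ \ (0\le x\le\bar h(t)),
\]
with constants $\gamma\in(0,-\lambda]$ and $M>0$ to be fixed. Then $\bar h(0)=h_0$, $\bar h$ is increasing, and $\bar h(t)\le h_1$ for all $t$ with $\bar h_\yy=h_1<\ell^*_D$. For the differential inequality in \eqref{3.1}, since $\phi\ge0$ and $\bar h(t)\le h_1$ we have $\int_0^{\bar h(t)}J(x-y)\phi\dy\le\int_0^{h_1}J(x-y)\phi\dy$; combining this with the displayed inequality for $\phi$ and with $f(\bar u)\le f'(0)\bar u$ (valid by {\bf(F2)}, as $f(u)/u<f'(0)$ for $u>0$), the quantity $\bar u_t-d\int_0^{\bar h(t)}J(x-y)\bar u\dy+d\bar u-f(\bar u)$ collapses to $M{\rm e}^{-\gamma t}\phi\,(-\gamma-\lambda)\ge0$ by the choice $\gamma\le-\lambda$. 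The condition $\bar u(t,\bar h(t))\ge0$ is immediate from $\phi>0$.

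For the free-boundary inequality I would use $\int_{\bar h(t)}^{\yy}J(x-y)\dy\le1$, so the right-hand side of the third line of \eqref{3.1} is at most $\mu M{\rm e}^{-\gamma t}\int_0^{h_1}\phi\dx=:\mu M{\rm e}^{-\gamma t}C_0$, whereas $\bar h'(t)=\ep_0\gamma\,{\rm e}^{-\gamma t}$. Both sides carry the same factor ${\rm e}^{-\gamma t}$, so the inequality holds exactly when $\mu\le\ep_0\gamma/(MC_0)$. Finally I choose $M\ge\|u_0\|_\yy/\min_{[0,h_0]}\phi$ (note $\min_{[0,h_0]}\phi>0$ by positivity and continuity), which yields $\bar u(0,x)\ge u_0(x)$ on $[0,h_0]$ and does not depend on $\mu$. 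Setting $\underline{\mu}_D:=\ep_0\gamma/(MC_0)>0$, the pair $(\bar u,\bar h)$ is an upper solution for every $0<\mu\le\underline{\mu}_D$, so Theorem \ref{t3.3} gives $h(t)\le\bar h(t)<h_1$ for all $t$, hence $h_\yy\le h_1<\yy$, and Theorem \ref{t3.2} forces vanishing.

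The main obstacle is not any single estimate but the coupled design of $\bar u$ and $\bar h$. The $\phi$-inequality lives on the fixed interval $[0,h_1]$, so the construction must keep $\bar h(t)\le h_1$ for all time, which caps the total displacement of the free boundary at $\ep_0$; at the same time the boundary inequality forces $\bar h'$ to dominate a flux that decays like ${\rm e}^{-\gamma t}$. The decisive point is to match the decay rate of $\bar h'$ to that of the flux (both ${\rm e}^{-\gamma t}$), which is what allows a single smallness threshold $\underline{\mu}_D$ to satisfy both inequalities simultaneously; arranging these rates to line up is the delicate bookkeeping of the argument.
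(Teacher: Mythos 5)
Your proof is correct and follows essentially the same route as the paper: both arguments build an upper solution whose density decays exponentially at a rate tied to the negative principal eigenvalue $\lambda_1=\lambda_p(\mathcal{L}^d_{(0,\,h_1)}+f'(0))$ on a fixed interval $[0,h_1]\subset(h_0,\ell^*_D)$, cap the free boundary at $h_1$ by taking $\mu$ small so that $\bar h'$ dominates the exponentially decaying flux, and conclude via Theorem \ref{t3.3} and the dichotomy. The only differences are cosmetic: you use a test function from the infimum characterization with a slightly worse exponent $\lambda\in(\lambda_1,0)$ and write the explicit supersolution $M{\rm e}^{-\gamma t}\phi$ directly, whereas the paper uses the exact eigenfunction and routes through an auxiliary fixed-boundary evolution problem before bounding it by $C{\rm e}^{\lambda_1 t/2}\phi_1$.
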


\begin{proof} For any given $h_1\in (h_0,\ell^*_D)$, from \cite[Proposition 3.4]{CDLL} we know that $\lambda_1:=\lambda_p(\mathcal{L}^d_{(0,\,h_1)}+f'(0))<0$. Let $\phi_1$ be the corresponding positive eigenfunction to $\lambda_1$ with $\|\phi_1\|_{\yy}=1$. For constant $C>0$, we define $u_1=Ce^{\lambda_1 t/2}\phi_1$, and easily see that, for $t>0$ and $x\in[0,h_1]$,
\bess
u_{1t}-d\int_{0}^{h_1}J(x-y)u_1(t,y)\dy+du_1-f'(0)u_1=-\frac{\lambda_1}{2}Ce^{\lambda_1 t/2}\phi_1>0.
\eess
Let $\bar{u}$ be the unique solution of
\bess\left\{\begin{aligned}
&\bar u_t= d\int_{0}^{h_1}J(x-y)\bar u(t,y)\dy-d\bar u+f(\bar u), \quad t>0,\ 0\le x\le h_1,\\
&\bar u(0,x)=u(0,x)\;\;\mbox{in}\; [0, h_0], ~ ~ \bar{u}(0,x)=0\;\;\mbox{in}\; [h_0, h_1].
 \end{aligned}\right.\eess
Thanks to the assumptions on $f$, we have $f(\bar{u})\le f'(0)\bar{u}$. Choose $C>0$ such that $C\phi_1>u_0$ in $[0,h_0]$. Then, by the comparison principle,
 \[\bar{u}\le u_1=Ce^{\lambda_1 t/2}\phi_1\le Ce^{\lambda_1 t/2}\;\;\;{\rm in}\;\;\mathbb{R}^+\times[0,h_1].\]
 Define
 \[\bar{h}(t)=h_0+\mu h_1C\dd\int_{0}^{t}{\rm e}^{\lambda_1 s/2}{\rm d}s ~ ~ {\rm ~ for ~ ~ }t\ge0.\]
We now prove that $(\bar{u},\bar{h})$ is an upper solution of \eqref{1.2}. Clearly, for $t>0$,
 \[\bar{h}(t)=h_0-\frac{2\mu h_1C}{\lambda_1}(1-{\rm e}^{\lambda_1 t/2})<h_0-\frac{2\mu h_1C}{\lambda_1}\le h_1\]
 provided that
 \[0<\mu\le \underline{\mu}_D:=\frac{-\lambda_1(h_1-h_0)}{2h_1C}.\]
 Thus by the equation of $\bar{u}$, we obtain that, for $t>0$ and $x\in[0,\bar{h}(t))$,
 \[\bar u_t\ge d\int_{0}^{\bar{h}(t)}J(x-y)\bar u(t,y)\dy-d\bar u+f(\bar u).\]
Moreover, it is easy to check that
 \[
 \mu\dd\int_{0}^{\bar h(t)}\int_{\bar h(t)}^{\infty}
J(x-y)\bar u(t,x)\dy\dx\le \mu Ch_1{\rm e}^{\lambda_1 t/2}=\bar{h}'(t).
\]
Therefore, $(\bar{u},\bar{h})$ is an upper solution of \eqref{1.2}. By Theorem \ref{t3.3},
  \[u(t,x)\le\bar{u}(t,x), ~ ~ h(t)\le\bar{h}(t) ~ ~ {\rm for }~ ~ t>0, ~ x\in[0,h(t)].\]
Consequently, $h_{\yy}\le \lim_{t\to\yy}\bar{h}\le h_1$.
\end{proof}

From the above proof we easily see that, when $f'(0)<d$ and $h_0<\ell^*_D$, vanishing happens for \eqref{1.2} if $u_0(x)$ is small sufficiently.

\begin{lemma}\label{l3.6}\, Let $c\in C(\overline{\mathbb{R}}^+)\cap L^{\yy}(\mathbb{R}^+)$,  $g_0, H>0$, $w_0\in C([0,g_0])$ with $w_0(g_0)=0$ and $w_0>0$ in $[0,g_0)$. Then there exists $\mu^0>0$ depending on $J,d,c,w_0, g_0$ and $H$ such that when $\mu\ge\mu^0$ and $(w,g)$ satisfies
 \bess\left\{\begin{aligned}
&w_t\ge d\dd\int_{0}^{g(t)}J(x-y)w(t,y)\dy-dw+c(x)w,  \ \ t>0,~0\le x<g(t),\\
&w(t,g(t))=0,\ \ t>0,\\
&g'(t)\ge\mu\dd\int_{0}^{g(t)}\int_{g(t)}^{\infty}
J(x-y)w(t,x)\dy\dx,\ \ t>0,\\
&w(0,x)=w_0(x),~g(0)=g_0,\ \ x\in[0,g_0],
  \end{aligned}\right.\eess
we must have $\lim_{t\to\yy}g(t)\ge H$.
\end{lemma}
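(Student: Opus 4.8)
The plan is to argue by comparison. The pair $(w,g)$ satisfies exactly the inequalities of an \emph{upper solution} of the linear free boundary problem obtained from \eqref{1.2} by replacing $f$ with the linear term $c(x)w$ (in the sense of Theorem \ref{t3.3}). Hence it suffices to exhibit a \emph{lower solution} $(\underline w,\underline g)$ of the same problem with $\underline w(0,\cdot)\le w_0$ and $\underline g(0)\le g_0$, whose free boundary reaches $H$ in finite time once $\mu$ is large; the comparison principle (Theorem \ref{t3.3}, together with the remark following Theorem \ref{t3.4}, which permits lower solutions and a general linear reaction $c$) then yields $g\ge\underline g$, and since $g$ is nondecreasing we obtain $\lim_{t\to\yy}g(t)\ge H$.

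The key device is to decouple the boundary motion from $\mu$. I would prescribe the free boundary independently of $\mu$: fix $T=1$ and set $\underline g(t)=g_0+(H-g_0)t$ on $[0,1]$, so that $\underline g(0)=g_0$, $\underline g(1)=H$ and $\underline g'\equiv H-g_0$; note $\underline g\in\mathbb H^1_{g_0}$. With this fixed moving boundary, let $\underline w$ solve the \emph{linear} nonlocal problem
\[
\underline w_t=d\int_0^{\underline g(t)}J(x-y)\underline w(t,y)\dy-d\underline w+c(x)\underline w,\qquad \underline w(t,\underline g(t))=0,
\]
with $\underline w(0,\cdot)=w_0$. Existence, uniqueness and continuity of $\underline w$ follow from a contraction-mapping argument as in Theorem \ref{t3.1}, and the maximum principle (Lemma \ref{l2.1}) gives $\underline w(t,x)>0$ for $0\le x<\underline g(t)$ and $t\ge0$, using $w_0>0$ in $[0,g_0)$. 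Crucially, $\underline w$, $\underline g$ and hence the flux
\[
\Phi(t):=\int_0^{\underline g(t)}\int_{\underline g(t)}^{\yy}J(x-y)\underline w(t,x)\dy\dx
\]
are all independent of $\mu$.

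Next I would show that $\Phi$ is continuous on $[0,1]$ and $\Phi(t)>0$ for every $t\in[0,1]$. Continuity is immediate from that of $\underline w$ and $\underline g$. For positivity, when $x$ is close to $\underline g(t)$ the inner integral $\int_{\underline g(t)}^{\yy}J(x-y)\dy$ is positive because $J$ is continuous with $J(0)>0$, while $\underline w(t,x)>0$ there; at $t=0$ this uses $w_0>0$ on $[0,g_0)$. Thus the integrand is positive on a set of positive measure and $\Phi(t)>0$. Consequently $\delta_0:=\min_{t\in[0,1]}\Phi(t)>0$, and $\delta_0$ depends only on $J,d,c,w_0,g_0,H$, not on $\mu$. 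Now set $\mu^0:=(H-g_0)/\delta_0$. For every $\mu\ge\mu^0$,
\[
\underline g'(t)=H-g_0\le\mu\,\delta_0\le\mu\,\Phi(t),
\]
so the free-boundary inequality of a lower solution holds; the interior equation holds with equality and $\underline w(t,\underline g(t))=0$, while $\underline w(0,\cdot)=w_0\le w_0$ and $\underline g(0)=g_0$. Hence $(\underline w,\underline g)$ is a lower solution, and comparison yields $g(1)\ge\underline g(1)=H$, so $\lim_{t\to\yy}g(t)\ge H$.

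The hard part is securing the $\mu$-independent lower bound $\delta_0>0$ on the flux; everything else is bookkeeping. This is precisely what the decoupling buys: by freezing $\underline g$ (and therefore $\underline w$ and $\Phi$) before $\mu$ enters, the largeness needed to drive the boundary can be pushed entirely into the choice of $\mu$. The only genuinely delicate point is the behaviour of $\Phi$ at $t=0$, where the initial datum vanishes at $g_0$; this is rescued by the hypothesis $w_0>0$ on the \emph{open} interval $[0,g_0)$, which forces $\Phi(0)>0$. The lack of regularity typical of nonlocal problems causes no trouble here, since the construction relies only on the maximum principle and on continuity, not on any smoothing estimate.
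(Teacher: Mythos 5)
Your argument is correct and is essentially the paper's own proof: the paper likewise prescribes a $\mu$-independent $C^1$ boundary $s$ on $[0,1]$ with $s(0)=g_0$, $s(1)=H$, solves the linear problem with that frozen boundary, bounds the flux below by a positive constant via continuity and the maximum principle, and then chooses $\mu^0$ so that $(z,s)$ becomes a lower solution to compare with $(w,g)$. Your only (immaterial) deviation is taking the specific linear choice $\underline g(t)=g_0+(H-g_0)t$ rather than a general increasing $s$.
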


\begin{proof}\, By the maximum principle, $w>0$ and $g'>0$ for $t>0$ and $0\le x<g(t)$. Choose a function $s\in C^1([0,1])$ with $s(0)=g_0$, $s(1)=H$ and $s'>0$, and consider the problem
 \bess\left\{\begin{aligned}
&z_t= d\dd\int_{0}^{s(t)}J(x-y)z(t,y)\dy-dz+c(x)z,  \ \ t>0,~0\le x<s(t),\\
&z(t,s(t))=0,\ \ t>0,\\
&z(0,x)=w_0(x),~s(0)=g_0,\ \ x\in[0,g_0].
 \end{aligned}\right.\eess
Similar to the arguments in \cite[Lemma 2.3]{CDLL}, this problem has a unique solution $z\in C([0,1]\times[0,s(t)])$, and $z>0$ in $(0,1]\times[0, s(t)]$. Thus $\int_{0}^{s(t)}\int_{s(t)}^{\infty}J(x-y)z(t,x)\dy\dx$ is continuous and has a positive lower bound in $[0,1]$. Since $s\in C^1([0,1])$, there exists $\mu^0>0$ such that, for $\mu\ge\mu^0$,
\[s'(t)\le\mu\dd\int_{0}^{s(t)}\int_{s(t)}^{\infty}J(x-y)z(t,x)\dy\dx ~ ~ {\rm for }~ ~ t\in[0,1].\]
Comparing $(w,g)$ with $(z,s)$ yields $g\ge s$ in $[0,1]$. So $\lim_{t\to\yy}g(t)\ge s(1)=H$ since $g'(t)>0$.
\end{proof}

By Lemmas \ref{l3.4} and \ref{l3.6}, we easily have the following result which implies that when initial habitat $h_0$ and growth rate $f'(0)$ are small, spreading can happen if expanding rate $\mu$ is large enough. The details of proof are omitted here.

\begin{lemma}\label{l3.7} Suppose $f'(0)<d$. If $h_0<\ell^*_D$, then there exists $\bar{\mu}_D>0$ such that spreading happens for \eqref{1.2} when $\mu\ge\bar{\mu}_D$.
\end{lemma}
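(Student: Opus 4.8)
The plan is to show that for $\mu$ large the free boundary must eventually overtake the threshold length $\ell^*_D$, and then to upgrade this to spreading via the second assertion of Lemma~\ref{l3.4}. Recall that the dichotomy in Theorem~\ref{t3.2} together with Lemma~\ref{l3.4} tells us that vanishing forces $h_\yy\le\ell^*_D$; so by contraposition it suffices to exhibit, for all large $\mu$, the bound $h_\yy>\ell^*_D$, after which $h_\yy=\yy$ and spreading occurs automatically. Accordingly I fix a target length $H>\ell^*_D$ (so in particular $H>h_0$) and aim to push $h(t)$ beyond $H$ by taking $\mu$ large, which is exactly the mechanism supplied by Lemma~\ref{l3.6}.

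The bridge to Lemma~\ref{l3.6} will be a linearisation of the reaction term valid over the a priori range of the solution. By Theorem~\ref{t3.1} the unique solution $(u,h)$ of \eqref{1.2} obeys $0<u\le M:=\max\{\|u_0\|_\yy,K\}$ in $D^T_h$. Since $f\in C^1$ with $f(0)=0$, setting $c_0:=\min_{s\in[0,M]}f'(s)$, the mean value theorem gives $f(s)=f'(\xi)s\ge c_0 s$ for all $s\in[0,M]$. Crucially, $M$ and hence $c_0$ depend only on $u_0$, $K$ and $f$, and are \emph{independent of} $\mu$. Consequently $(u,h)$ satisfies, for $0\le x<h(t)$,
\[
u_t=d\int_0^{h(t)}J(x-y)u(t,y)\dy-du+f(u)\ge d\int_0^{h(t)}J(x-y)u(t,y)\dy-du+c_0u,
\]
together with $u(t,h(t))=0$, the boundary-speed law (which holds with equality, hence a fortiori with $\ge$), and $u(0,x)=u_0(x)$, $h(0)=h_0$. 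Because $c_0\in C(\overline{\R}^+)\cap L^\yy(\R^+)$ (it is constant) and $u_0$ meets condition {\bf(H)}, the pair $(u,h)$ fulfils exactly the hypotheses of Lemma~\ref{l3.6} with $c\equiv c_0$, $g_0=h_0$, $w_0=u_0$ and the chosen $H$.

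Feeding this into Lemma~\ref{l3.6} yields a constant $\mu^0>0$ depending only on $J,d,c_0,u_0,h_0$ and $H$ — and therefore independent of $\mu$ — such that whenever $\mu\ge\mu^0$ we have $h_\yy=\lim_{t\to\yy}h(t)\ge H>\ell^*_D$. Since $h_\yy>\ell^*_D$ rules out vanishing by the second statement of Lemma~\ref{l3.4}, we conclude $h_\yy=\yy$, i.e.\ spreading happens, and taking $\bar\mu_D:=\mu^0$ finishes the proof. The one point demanding care is keeping the comparison constant $c_0$ (and hence $\mu^0$) free of any $\mu$-dependence: this is precisely what the $\mu$-uniform a priori bound $u\le M$ of Theorem~\ref{t3.1} provides, after which the result follows by simply inserting the genuine solution $(u,h)$ into Lemma~\ref{l3.6} as an admissible supersolution pair, rather than constructing an auxiliary lower solution.
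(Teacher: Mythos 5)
Your proof is correct and follows exactly the route the paper intends (the paper simply states that the lemma follows from Lemmas \ref{l3.4} and \ref{l3.6} and omits the details): you apply Lemma \ref{l3.6} to the genuine solution with the $\mu$-independent linear lower bound $f(s)\ge \big(\min_{[0,M]}f'\big)s$ supplied by the a priori estimate of Theorem \ref{t3.1}, push $h_\yy$ past $\ell^*_D$, and invoke Lemma \ref{l3.4}. The only cosmetic remark is that your constant $c_0$ clashes notationally with the spreading speed $c_0$ of Proposition \ref{p3.1}, so it should be renamed.
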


Based on the above lemmas, when initial habitat $h_0$ and growth rate $f'(0)$ are small, we may find a critical value of expanding rate $\mu$ by monotonicity and continuous dependence of solution on $\mu$, which governs spreading and vanishing for problem \eqref{1.2}.

\begin{lemma}\label{l3.8} Suppose $f'(0)<d$. If $h_0<\ell^*_D$, then there exists $\mu^*_D>0$ such that spreading happens for \eqref{1.2} when $\mu>\mu^*_D$, and vanishing happens when $0<\mu\le\mu^*_D$.
\end{lemma}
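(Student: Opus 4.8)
The plan is to define $\mu^*_D$ as the supremum of the set of expanding rates for which vanishing occurs, and then to show, using monotonicity and continuous dependence of the solution on $\mu$, that this set is precisely $(0,\mu^*_D]$.

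First I would let $\Sigma$ denote the set of all $\mu>0$ for which vanishing happens for \eqref{1.2}. By Lemma \ref{l3.5} we have $(0,\underline{\mu}_D]\subseteq\Sigma$, so $\Sigma\neq\emptyset$; by Lemma \ref{l3.7} spreading happens whenever $\mu\ge\bar{\mu}_D$, whence $\Sigma\cap[\bar{\mu}_D,\yy)=\emptyset$ and $\Sigma$ is bounded above. Next, recall from the remark following Theorem \ref{t3.4} that the solution $(u,h)$ of \eqref{1.2} is strictly increasing in $\mu$; in particular, for $\mu_1<\mu_2$ the free boundaries satisfy $h_{\mu_1}(t)\le h_{\mu_2}(t)$ for all $t$, so $h_{\mu_1,\yy}\le h_{\mu_2,\yy}$. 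Consequently, if $\mu_2\in\Sigma$ (so $h_{\mu_2,\yy}<\yy$) and $0<\mu_1<\mu_2$, then $h_{\mu_1,\yy}\le h_{\mu_2,\yy}<\yy$, i.e. $\mu_1\in\Sigma$. Thus $\Sigma$ is downward closed, hence an interval of the form $(0,\mu^*_D)$ or $(0,\mu^*_D]$, where $\mu^*_D:=\sup\Sigma\in[\underline{\mu}_D,\bar{\mu}_D]$.

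It remains to decide whether the endpoint $\mu^*_D$ lies in $\Sigma$, and I would show that vanishing happens at $\mu=\mu^*_D$ by contradiction. Suppose instead that spreading occurs at $\mu^*_D$, so that $h_{\mu^*_D,\yy}=\yy$; then, since $h_{\mu^*_D}$ is increasing in $t$, there is a finite time $T$ with $h_{\mu^*_D}(T)>\ell^*_D$. On the other hand, every $\mu<\mu^*_D$ belongs to $\Sigma$ (by the definition of the supremum together with downward closedness), so vanishing happens for such $\mu$ and Lemma \ref{l3.4} yields $h_{\mu,\yy}\le\ell^*_D$; in particular $h_\mu(T)\le h_{\mu,\yy}\le\ell^*_D$. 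Letting $\mu\nearrow\mu^*_D$ and using the continuous dependence of $h_\mu(T)$ on $\mu$ gives $h_{\mu^*_D}(T)\le\ell^*_D$, contradicting $h_{\mu^*_D}(T)>\ell^*_D$. Hence $\mu^*_D\in\Sigma$, so vanishing happens for all $0<\mu\le\mu^*_D$; and for $\mu>\mu^*_D$ we have $\mu\notin\Sigma$, so the dichotomy of Theorem \ref{t3.2} forces spreading.

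The main obstacle is the last step, which relies on the continuous dependence of the free boundary $h_\mu(t)$ on the parameter $\mu$, uniformly on compact time intervals. I expect to establish this by a Gronwall-type estimate for the difference of two solutions $(u_\mu,h_\mu)$ and $(u_{\mu'},h_{\mu'})$, controlling simultaneously the integral equation satisfied by $u$ and the ODE governing $h$, or by invoking the analogous continuous-dependence result already available for the model in \cite{CDLL}. Once that ingredient is secured, the threshold structure follows at once from the monotonicity in $\mu$ and the spreading--vanishing dichotomy.
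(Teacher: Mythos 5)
Your proof is correct and follows essentially the same route the paper intends: the authors state only that the critical $\mu^*_D$ is found ``by monotonicity and continuous dependence of solution on $\mu$,'' and your argument (defining $\Sigma$ as the vanishing set, using Lemmas \ref{l3.5}, \ref{l3.7} and \ref{l3.4} together with the monotonicity in $\mu$ from Theorem \ref{t3.3}, and closing the endpoint case by continuous dependence of $h_\mu(T)$ on $\mu$) is exactly the standard way this is carried out, as in the corresponding result of \cite{CDLL}. The one ingredient you correctly flag as needing proof, the continuous dependence on $\mu$ over compact time intervals, is also left implicit by the paper and is available by the Gronwall-type estimate you describe.
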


In conclusion, we have the spreading-vanishing criteria as below.
\begin{theorem}[Spreading-vanishing criteria]\label{t3.5} Let $(u,h)$ be the unique solution of \eqref{1.2}.
\begin{enumerate}\vspace{-2mm}
\item[{\rm(1)}]\, If $f'(0)\ge d$, then spreading happens;\vspace{-2mm}
\item[{\rm(2)}]\, If $f'(0)<d$, then there exists a unique $\ell^*_D>0$ such that spreading happens when $h_0\ge\ell^*_D$;\vspace{-2mm}
\item[{\rm(3)}]\, If $f'(0)<d$ and $h_0<\ell^*_D$, then there is $\mu^*_D>0$ such that spreading happens for \eqref{1.2} if and only if $\mu>\mu^*_D$.\vspace{-2mm}
\end{enumerate}
\end{theorem}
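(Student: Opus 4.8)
The strategy is to assemble the three conclusions directly from the lemmas already established, treating each case separately. Part (1) is immediate from Lemma \ref{l3.3}: when $f'(0)\ge d$, spreading always occurs regardless of $h_0$ and $\mu$, so nothing further is needed. For part (2), the existence and uniqueness of the threshold $\ell^*_D>0$ was guaranteed by the discussion following Lemma \ref{l3.4}, where \cite[Proposition 3.4]{CDLL} furnishes a unique $\ell^*_D$ with $\lambda_p(\mathcal{L}^d_{(0,\,\ell^*_D)}+f'(0))=0$ and the correct sign behaviour on either side. Lemma \ref{l3.4} then says precisely that $h_0\ge\ell^*_D$ forces spreading, so part (2) is also a direct citation.

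The real content is part (3), and here the plan is to establish the existence of a sharp critical value $\mu^*_D$ via a monotonicity-and-continuity argument. First I would recall from the remark after Theorem \ref{t3.4} that the solution $(u,h)$ of \eqref{1.2} is strictly increasing in $\mu$; consequently $h_\yy$ is monotone nondecreasing in $\mu$, so the set of $\mu$ for which spreading happens is an interval of the form $(\mu^*_D,\yy)$ or $[\mu^*_D,\yy)$. To pin down $\mu^*_D$ as a genuine finite positive number I would invoke Lemma \ref{l3.5}, which supplies a $\underline{\mu}_D>0$ such that vanishing occurs for all $0<\mu\le\underline{\mu}_D$, and Lemma \ref{l3.7}, which supplies a $\bar{\mu}_D>0$ such that spreading occurs for all $\mu\ge\bar{\mu}_D$. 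These two facts bracket the transition, guaranteeing that
\[
\mu^*_D:=\sup\{\mu>0:\ \text{vanishing happens for \eqref{1.2}}\}
\]
satisfies $0<\underline{\mu}_D\le\mu^*_D\le\bar{\mu}_D<\yy$. This is exactly the assertion packaged in Lemma \ref{l3.8}, so in practice part (3) reduces to citing Lemma \ref{l3.8}.

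The one step requiring genuine care — and the main obstacle — is verifying that the transition at $\mu^*_D$ is sharp, i.e. that vanishing (not spreading) holds exactly at $\mu=\mu^*_D$ itself, so that spreading happens \emph{if and only if} $\mu>\mu^*_D$. The plan is to argue by contradiction: if spreading occurred at $\mu=\mu^*_D$, then by continuous dependence of the solution on the parameter $\mu$ together with the spreading-vanishing dichotomy of Theorem \ref{t3.2}, spreading would persist for $\mu$ slightly below $\mu^*_D$ as well, contradicting the definition of $\mu^*_D$ as a supremum of vanishing parameters. The delicate point is that continuous dependence on $\mu$ must be combined with the fact that if $h_\yy=\yy$ for the limiting parameter, a fixed large interval $[0,\ell]$ with $\lambda_p(\mathcal{L}^d_{(0,\,\ell)}+f'(0))>0$ is eventually covered, and this covering is stable under small perturbations of $\mu$ by Theorem \ref{t3.3}; conversely vanishing is characterized by $h_\yy\le\ell^*_D<\yy$ through Lemma \ref{l3.4}. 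Reconciling these two closed/open characterizations to show the boundary case falls on the vanishing side is where the monotonicity in $\mu$ must be used decisively. Since all of this is already encapsulated in Lemma \ref{l3.8}, the proof of the theorem itself consists of collecting Lemmas \ref{l3.3}, \ref{l3.4} and \ref{l3.8}.
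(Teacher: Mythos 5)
Your proposal matches the paper's treatment: the theorem is stated there as a direct consequence of Lemma \ref{l3.3} for part (1), Lemma \ref{l3.4} together with the definition of $\ell^*_D$ for part (2), and Lemma \ref{l3.8} (itself obtained from Lemmas \ref{l3.5} and \ref{l3.7} via monotonicity and continuous dependence on $\mu$) for part (3). Your extra discussion of why the borderline case $\mu=\mu^*_D$ falls on the vanishing side is consistent with, and somewhat more explicit than, the paper's sketch.
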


Next we discuss the spreading speed of \eqref{1.2}, and the following semi-wave problem is vital to our arguments.

\begin{proposition}[{\cite[Theorem 1.2]{DLZ}}]\label{p3.1}\, Assume that $J$ satisfies the condition {\bf(J)} and $f$ satisfies conditions {\bf(F1)}-{\bf(F3)}. Then the problem
\bes\label{3.3}\left\{\begin{array}{lll}
 d\dd\int_{-\yy}^{0}J(x-y)\phi(y)\dy-d\phi+c\phi'+f(\phi)=0, \quad -\yy<x<0,\\[1mm]
\phi(-\yy)=u^*,\ \ \phi(0)=0, \ \ c=\mu\dd\int_{-\yy}^{0}\int_{0}^{\yy}J(x-y)\phi(x)\dy\dx
 \end{array}\right.
 \ees
 has a unique solution pair $(c_0,\phi^{c_0})$ with $c_0>0$ and $\phi^{c_0}(x)$ nonincreasing in $(-\yy,0]$ if and only if the condition {\bf(J1)} is satisfied. We usually call $\phi^{c_0}$ the semi-wave solution of \eqref{3.3} with speed $c_0$.
\end{proposition}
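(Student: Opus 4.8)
The plan is to decouple the unknown speed from the profile equation: for each fixed speed I would solve the profile problem with the two boundary conditions only, and then pin down the speed by a flux-matching (fixed-point) argument, in which condition {\bf(J1)} enters decisively. First I would fix $c>0$ and seek a nonincreasing solution of
\[d\dd\int_{-\yy}^{0}J(x-y)\phi(y)\dy-d\phi+c\phi'+f(\phi)=0,\quad -\yy<x<0,\]
subject only to $\phi(-\yy)=u^*$ and $\phi(0)=0$. I would approximate on truncated intervals $(-l,0)$, building a monotone solution $\phi_{c,l}$ by a monotone iteration scheme between the supersolution $u^*$ and a suitable subsolution, exactly as in the steady-state construction of Lemma \ref{l2.4}; the nonincreasing property $\phi'\le0$ would come from a sliding argument combined with the maximum principle of Lemma \ref{l2.1}. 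Letting $l\to\yy$, the monotonicity in $l$ together with the uniform bound $0\le\phi_{c,l}\le u^*$ and the dominated convergence theorem yields $\phi_c$, and the far-field identity $\phi_c(-\yy)=u^*$ follows from the limiting equation and the uniqueness of the bounded positive steady state (as in Lemma \ref{l2.4}).

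Next, using the comparison principle I would show that $\phi_c$ depends continuously on $c$ and is strictly decreasing in $c$ (a larger advection speed forces a smaller profile), again via a sliding/maximum-principle argument adapted to the nonlocal operator. I then define the flux
\[\mathcal F(c):=\mu\dd\int_{-\yy}^{0}\!\int_{0}^{\yy}J(x-y)\phi_c(x)\dy\dx
=\mu\dd\int_{-\yy}^{0}\phi_c(x)\kk(\int_{0}^{\yy}J(x-y)\dy\rr)\dx.\]
For $x<0$ one has $\dd\int_{0}^{\yy}J(x-y)\dy=\int_{|x|}^{\yy}J(z)\,{\rm d}z$, so, since $\phi_c(x)\to u^*>0$ as $x\to-\yy$, the finiteness of $\mathcal F(c)$ is governed by Tonelli's identity
\[\dd\int_{0}^{\yy}\!\int_{s}^{\yy}J(z)\,{\rm d}z\,{\rm d}s=\int_{0}^{\yy}zJ(z)\,{\rm d}z.\]
Hence $\mathcal F(c)<\yy$ for some (equivalently, every) $c$ if and only if {\bf(J1)} holds; this equivalence is the heart of the statement.

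Assuming {\bf(J1)}, I would deduce from the monotonicity and continuity of $c\mapsto\phi_c$ that $\mathcal F$ is continuous and strictly decreasing, with $\lim_{c\to0^+}\mathcal F(c)>0$ and $\lim_{c\to\yy}\mathcal F(c)=0$ (the profile collapses to $0$ as $c\to\yy$). Then the map $c\mapsto\mathcal F(c)-c$ is continuous, strictly decreasing, positive near $c=0$ and negative for large $c$, so it admits a unique zero $c_0>0$; setting $\phi^{c_0}=\phi_{c_0}$ produces the desired solution pair, and strict monotonicity gives uniqueness. Conversely, if {\bf(J1)} fails then $\int_{0}^{\yy}zJ(z)\,{\rm d}z=\yy$, so $\mathcal F(c)=+\yy$ for every $c$ because $\phi_c$ stays bounded below near a positive constant at $-\yy$; thus the speed relation $c=\mathcal F(c)$ cannot hold for any finite $c$, and no finite-speed semi-wave exists.

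The main obstacle is the fixed-speed profile construction of the first step. Because the nonlocal diffusion has no smoothing effect, establishing existence together with the nonincreasing property and, above all, the correct limit $\phi_c(-\yy)=u^*$ requires delicate uniform estimates on the truncated problems, a sliding argument that functions without regularity gains, and a careful passage to the limit $l\to\yy$; securing strict monotonicity and continuity of $\phi_c$ in $c$ afterwards (needed for the unique fixed point) is the second most delicate point.
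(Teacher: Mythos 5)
The paper itself contains no proof of this statement: it is quoted verbatim from \cite[Theorem 1.2]{DLZ}, so your attempt has to be judged against that source. Your overall architecture --- decoupling the profile from the speed, defining the flux map $\mathcal F(c)$, and using the Tonelli identity $\int_{-\infty}^{0}\int_{0}^{\infty}J(x-y)\,{\rm d}y\,{\rm d}x=\int_{0}^{\infty}zJ(z)\,{\rm d}z$ to tie finiteness of the flux to {\bf(J1)} --- is indeed the strategy of that paper, and your ``only if'' direction is essentially correct: any solution pair has $\phi^{c_0}(-\infty)=u^*>0$, so $c_0=\mathcal F(c_0)<\infty$ forces the first moment of $J$ to be finite.

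The genuine gap is in your very first step. It is \emph{not} true that for every $c>0$ the profile problem with only the two boundary conditions admits a nonincreasing solution satisfying $\phi_c(-\infty)=u^*$. Whenever $J$ has a finite exponential moment (for instance $J$ compactly supported, which is perfectly admissible under {\bf(J)} and {\bf(J1)}), there is a finite critical speed $c_*$ --- this is Theorem 1.1 of \cite{DLZ} --- such that a monotone semi-wave exists if and only if $c<c_*$; for $c\ge c_*$ your truncated solutions $\phi_{c,l}$ converge to the trivial limit $0$ as $l\to\infty$. Your justification of the far-field limit, ``follows from the limiting equation and the uniqueness of the bounded positive steady state (as in Lemma \ref{l2.4})'', does not work: Lemma \ref{l2.4} concerns the equation \emph{without} the drift term $c\phi'$, and in any case a uniqueness statement for \emph{positive} steady states cannot exclude the identically zero limit, which always solves the limiting profile equation together with $\phi_c(0)=0$. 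This is the central difficulty of the semi-wave theory, not a technicality to be absorbed into ``delicate uniform estimates''. Its consequences propagate: $\mathcal F$ is defined only on $(0,c_*)$, your claim $\lim_{c\to\infty}\mathcal F(c)=0$ is vacuous when $c_*<\infty$, and the intermediate-value argument must instead be run on $(0,c_*)$, where one needs the additional fact that $\phi^c\to 0$ locally uniformly as $c\to c_*^-$ (so that $\mathcal F(c)-c\to-c_*<0$ by dominated convergence, using {\bf(J1)}); that fact in turn rests on the nonexistence of a semi-wave at $c=c_*$, i.e.\ precisely the threshold theorem your outline skips. Until the existence range of $\phi_c$ and the behaviour of $\mathcal F$ at the right end of that range are established, the fixed-point step does not close.
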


\begin{theorem}[Spreading speed]\label{t3.6} Assume that {\bf(F1)}-{\bf(F3)} hold and spreading happens for \eqref{1.2}. Then we have
  \bes
\frac{U(0)}{u^*}c_0\le\liminf_{t\to\yy}\frac{h(t)}{t}\le\limsup_{t\to\yy}\frac{h(t)}{t}\le c_0 \;\;\;{\rm if~{\bf(J1)}~holds},\label{3.4}\\
 \lim_{t\to\yy}\frac{h(t)}{t}=\yy\;\;\;{\rm if~{\bf(J1)}~does~not~hold},\qquad\qquad\label{3.5}
  \ees
where $U$ is given by Lemma \ref{l2.4}.
\end{theorem}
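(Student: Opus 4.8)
The statement splits into three estimates, and my plan is to treat each by a comparison argument based on the semi-wave $\phi^{c_0}$ of Proposition~\ref{p3.1} (when {\bf(J1)} holds) and on the tail of $J$ (when {\bf(J1)} fails). Throughout I would pass to the moving frame $\xi=x-\bar h(t)$ and exploit that $\phi^{c_0}$ is nonincreasing with $\phi^{c_0}(-\infty)=u^*$, $\phi^{c_0}(0)=0$, together with the semi-wave identity $d\int_{-\infty}^{0}J\phi-d\phi+c_0\phi'+f(\phi)=0$ and condition {\bf(F2)} in the form $f(\lambda s)\le\lambda f(s)$ for $\lambda\ge1$, $f(\lambda s)\ge\lambda f(s)$ for $0<\lambda\le1$. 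The comparison Theorems~\ref{t3.3}--\ref{t3.4} and the convergence $u(t,\cdot)\to U$ from Theorem~\ref{t3.2} are the workhorses.

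\emph{Upper bound $\limsup h(t)/t\le c_0$.} Since $f(u)<0$ for $u>u^*$, the spatially homogeneous ODE super-solution $\bar v'=f(\bar v)$, $\bar v(0)=\max\{\|u_0\|_{\infty},K\}$, decreases to $u^*$; hence for any $\delta>0$ there is $T$ with $u(T,x)\le u^*+\delta'$ for all $x$. I then take $\bar u(t,x)=(1+\delta)\phi^{c_0}(x-\bar h(t))$ with $\bar h(t)=(1+\delta)c_0(t-T)+X_0$. In the moving frame, using the semi-wave identity, $f((1+\delta)\phi)\le(1+\delta)f(\phi)$, $\phi'\le0$ and $(1+\delta)^2\ge(1+\delta)$, the equation inequality $\bar u_t\ge d\int_0^{\bar h}J\bar u-d\bar u+f(\bar u)$ holds because the only leftover term, $-\,d\int_{-\infty}^{-\bar h}J\phi\le0$, carries the favourable sign; the definition of $c_0$ gives $\bar h'=(1+\delta)c_0\ge\mu\int_0^{\bar h}\!\int_{\bar h}^{\infty}J\bar u$. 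Choosing $X_0$ so large that $(1+\delta)\phi^{c_0}(x-X_0)\ge u^*+\delta'\ge u(T,x)$ on $[0,h(T)]$, Theorem~\ref{t3.3} yields $h(t)\le\bar h(t)$, so $\limsup h/t\le(1+\delta)c_0$; letting $\delta\to0$ finishes it.

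\emph{Lower bound $\liminf h(t)/t\ge (U(0)/u^*)c_0$.} Spreading gives $u(t,\cdot)\to U$ locally uniformly with $U(0)<u^*$, and I would use the scaled profile $\underline u(t,x)=\kappa\,\phi^{c_0}(x-\underline h(t))$ with $\kappa$ slightly below $U(0)/u^*$ and $\underline h'=\kappa c_0$. Via $f(\kappa\phi)\ge\kappa f(\phi)$ and the semi-wave identity, the sub-solution inequality for the equation reduces to
\[(1-\kappa)c_0\,\bigl(-\phi'(\xi)\bigr)\ \ge\ d\int_{-\infty}^{-\underline h}J(\xi-\eta)\phi(\eta)\,\mathrm{d}\eta\qquad\text{for all relevant }\xi,\]
while $\underline h'=\kappa c_0$ is exactly compatible with the definition of $c_0$. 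Near the front ($\xi$ close to $0$) this holds once $\underline h$ is large, since the right side is tiny. The main obstacle is this inequality near the fixed killing boundary $x=0$ (that is, $\xi\approx-\underline h$): there $-\phi'(\xi)\to0$ whereas the truncated mass on the right does not vanish, so the naive scaled wave is \emph{not} a sub-solution, reflecting the loss of individuals into $(-\infty,0)$. I would resolve this by (i) using $u(t,x)\ge U(x)-\varepsilon\ge\kappa u^*\ge\kappa\phi$ on a boundary strip to run the comparison on a window $r_0<x<\underline h(t)$ through Theorem~\ref{t3.4}, and (ii) modifying $\underline u$ on that strip so that it follows the genuine sub-solution $U$, which already balances the boundary truncation, and then gluing. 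This localized correction is the technical heart, following the construction in \cite{DWZ}; the amplitude $\kappa\approx U(0)/u^*$ is precisely what keeps the glued profile below $U$ near $x=0$ while delivering the reduced flux $\kappa c_0$. Sending $\varepsilon\to0$ and $\kappa\uparrow U(0)/u^*$ gives \eqref{3.4}.

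\emph{Accelerated case.} When {\bf(J1)} fails, $\int_0^{\infty}G=\infty$, where $G(s)=\int_s^{\infty}J$. I would prove $\liminf h/t\ge c$ for every $c>0$. Writing the boundary law as $h'(t)=\mu\int_0^{h(t)}\int_{h(t)}^{\infty}J(x-y)u(t,x)\,\mathrm{d}y\,\mathrm{d}x$ and using that, behind an $O(1)$ front layer, $u(t,x)\ge u^*/2$ on $[0,h(t)-M]$, I obtain $h'(t)\ge\tfrac{\mu u^*}{2}\int_M^{h(t)}G(s)\,\mathrm{d}s=:\Phi(h(t))$ with $\Phi(r)\to\infty$ as $r\to\infty$. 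Since $h(t)\to\infty$, for any fixed $c$ eventually $\Phi(h(t))\ge c$, hence $h'(t)\ge c$ and $\liminf h/t\ge c$; as $c$ is arbitrary, \eqref{3.5} follows. The delicate point here is the bulk-filling statement $u\ge u^*/2$ up to an $O(1)$ layer when no finite-speed semi-wave is available; I would bootstrap it from a compactly-supported (hence {\bf(J1)}-admissible) truncated-kernel lower solution together with the monotonicity already built into the dynamics. I expect the two sub-solution constructions—near the killing boundary for the lower bound, and the bulk-filling estimate in the accelerated case—to be the parts demanding the most care.
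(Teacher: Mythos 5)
Your upper bound argument matches the paper's Step 1 essentially verbatim and is correct. The other two parts, however, each contain a genuine gap, and in both cases the missing ingredient is the same device the paper relies on: replacing $J$ by the compactly supported approximations $J_n(x)=\xi(x/n)J(x)$ and working with the associated truncated free boundary problems $(u_n,h_n)$, semi-waves $(c_n,\phi_n)$ and half-line steady states $U_n$, together with the convergences $c_n\to c_0$, $\phi_n\to\phi^{c_0}$, $U_n\to U$, $u_n^*\to u^*$. For the lower bound you correctly reduce matters to
\[(1-\kappa)c_0\bigl(-{\phi^{c_0}}'(\xi)\bigr)\ \ge\ d\int_{-\infty}^{-\underline h(t)}J(\xi-\eta)\phi^{c_0}(\eta)\,{\rm d}\eta,\qquad \xi=x-\underline h(t),\]
but you mislocate where it fails: it fails not only at $x\approx0$ but at \emph{every} fixed $x$ once $t$ is large, since then $\xi\to-\infty$ forces ${\phi^{c_0}}'(\xi)\to0$ while the right-hand side is bounded below by $d\,\phi^{c_0}(-\underline h(t))\int_{x}^{\infty}J(z)\,{\rm d}z$, which stays away from zero whenever $J$ has unbounded support. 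Consequently no window $(r_0,\underline h(t))$ with fixed $r_0$ rescues the differential inequality, and your proposed gluing with $U$ does not address this (since $\kappa\phi^{c_0}\le\kappa u^*<U(0)\le U$, taking a minimum with $U$ changes nothing). The paper's resolution is exactly the kernel truncation: with $J_n$ supported in $[-L,L]$ and the comparison run via Theorem~\ref{t3.4} on $L<x<\underline h(t)$, the offending integral vanishes identically; the factor $U_n(0)/u_n^*$ then guarantees $\underline u\le(1-\ep)U_n(0)\le(1-\ep)U_n(x)\le u_n(t+T_1,x)$ on the strip $[0,L]$, and passing $n\to\infty$ yields the constant $U(0)c_0/u^*$. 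Your window idea (i) is the right skeleton, but without the truncation the estimate you need is simply false.

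The accelerated case is the more serious problem. Your ODE bound $h'(t)\ge\tfrac{\mu u^*}{2}\int_M^{h(t)}G(s)\,{\rm d}s$ hinges on the bulk-filling claim $u(t,\cdot)\ge u^*/2$ on $[x_0,h(t)-M]$ with $M$ \emph{fixed}, and this is neither established by anything in the paper nor plausible in the accelerated regime: a point at fixed distance $M$ behind a superlinearly moving front was first reached only a time of order $M/h'(t)\to0$ earlier, so the density there need not have had time to grow to order one. Weakening the claim to a fixed compact set $[x_0,A]$ (which \emph{is} available from Lemma~\ref{l3.2}) only gives $h'(t)\ge\tfrac{\mu u^*}{2}\int_{h(t)-A}^{h(t)-x_0}G(s)\,{\rm d}s$, and divergence of $\int_M^{\infty}G$ does not imply that such tail windows of $\int G$ tend to infinity (take $G(s)=1/(s\log s)$). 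The paper avoids this entirely: the truncated-kernel lower bound $\liminf_{t\to\infty}h(t)/t\ge \frac{U_n(0)}{u_n^*}c_n$ from Step 2 remains valid for every $n$ because each $J_n$ is compactly supported, and one then shows $c_n\to\infty$ when {\bf(J1)} fails (by the argument of \cite[Proposition 5.1]{DN213}). You would need either to supply a proof of the uniform-up-to-the-front positivity you invoke, or to switch to the truncated-kernel route.
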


\begin{proof}
{\bf Step 1}:\, {\it The proof of the last inequality of \eqref{3.4}}.
For any small $\ep>0$ and $L>0$ to be determined later, define
\[\bar{h}(t)=(1+\ep)c_0t+L, ~ ~ \bar{u}(t,x)=(1+\ep)\phi^{c_0}(x-\bar{h}(t)).\]
We are going to check that there exists $T>0$ such that
\bes\label{3.6}
\left\{\begin{aligned}
&\bar u_t\ge d\dd\int_{0}^{\bar h(t)}J(x-y)\bar u(t,y)\dy-d\bar u+ f(\bar u), && t>0,~0\le x<\bar h(t),\\
&\bar u(t,\bar h(t))\ge0,&& t>0,\\
&\bar h'(t)\ge\mu\dd\int_{0}^{\bar h(t)}\int_{\bar h(t)}^{\infty}
J(x-y)\bar u(t,x)\dy\dx,&& t>0,\\
&\bar h(0)\ge h(T),\;\; \bar u(0,x)\ge u(T,x),&& x\in[0,h(T)].
\end{aligned}\right.
 \ees
Clearly, from a simple comparison argument, we have $\limsup_{t\to\yy}u(t,x)\le u^*$ uniformly in $\overline{\mathbb{R}}^+$, which yields that there is $T>0$ such that $u(t,x)\le (1+\ep/2)u^*$ for $t\ge T$ and $x\in \overline{\mathbb{R}}^+$. Due to $\phi^{c_0}(-\yy)=u^*$, we may let $L$ large sufficiently such that $\bar h(0)=L>h(T)$ and $\bar{u}(0,x)=(1+\ep)\phi^{c_0}(x-L)\ge(1+\ep/2)u^*\ge u(T,x)$ for $x\in[0,h(T)]$.

Now we show that the first and third inequations of \eqref{3.6} hold. Direct calculations yield
 \bess
\bar u_t&=&-(1+\ep)^2c_0{\phi^{c_0}}'(x-\bar{h}(t))\ge -(1+\ep)c_0{\phi^{c_0}}'(x-\bar{h}(t))\\
&=&(1+\ep)\left(d\dd\int_{-\yy}^{\bar h(t)}J(x-y)\phi^{c_0}(y-\bar{h}(t))\dy-d\phi^{c_0}(x-\bar{h}(t))+f(\phi^{c_0}(x-\bar{h}(t)))\right)\\
&\ge&d\dd\int_{0}^{\bar h(t)}J(x-y)\bar u(t,y)\dy-d\bar u+f(\bar u).
\eess
Moreover,
\bess
(1+\ep)\mu\dd\int_{0}^{\bar h(t)}\int_{\bar h(t)}^{\infty}
J(x-y)\phi^{c_0}(x-\bar{h}(t))\dy\dx&=&(1+\ep)\mu\dd\int_{-\bar{h}(t)}^{0}\int_{0}^{\infty}
J(x-y)\phi^{c_0}(x)\dy\dx\\
&\le&(1+\ep)\mu\dd\int_{-\yy}^{0}\int_{0}^{\infty}
J(x-y)\phi^{c_0}(x)\dy\dx\\[1mm]
&=&(1+\ep)c_0=\bar{h}'(t).
\eess

By virtue of Theorem \ref{t3.3}, we have
 \[u(t+T,x)\le \bar u(t,x), ~ ~ h(t+T)\le \bar h(t) ~ ~ {\rm for } ~ t\ge 0, ~ x\in[0,h(t)].\]
 Thus
 \[\limsup_{t\to\yy}\frac{h(t)}{t}\le\lim_{t\to\yy}\frac{\bar{h}(t-T)}{t}=(1+\ep)c_0.\]
 Letting $\ep\to0$, we obtain the desired conclusion.

{\bf Step 2}: {\it The proof of first inequality of \eqref{3.4}}. Define
\begin{align*}
\xi(x)=\left\{\begin{aligned}
&1,& &|x|\le1,\\
&2-|x|,& &1\le|x|\le2,\\
&0, & &|x|\ge2,
\end{aligned}\right.
  \end{align*}
and $J_n(x)=\xi(\frac{x}{n})J(x)$. Clearly, $J_n$ are supported compactly and nondecreasing in $n$, and $J_n\le J$. Also we have $\lim_{n\to\yy}J_n(x)=J(x)$ in $L^1(\mathbb{R})$ and locally uniformly in $\mathbb{R}$.
Thus we can choose $n$ large enough, say $n\ge N>0$, such that $d(\|J_n\|_1-1)u+f(u)$ still meets conditions {\bf(F1)}-{\bf(F3)}.

 For any given $n\ge N$, let $(u_n,h_n)$ be the unique solution of
 \bes
\left\{\begin{aligned}
&u_{nt}=d\dd\int_{0}^{h_{n}(t)}J_n(x-y)u_n(t,y)\dy-du_n+f(u_n), && t>0,~0\le x<h_n(t),\\
&u_n(t,h_n(t))=0, && t>0,\\
&h_n'(t)=\mu\dd\int_{0}^{h_n(t)}\!\!\int_{h_n(t)}^{\infty}
J_n(x-y)u_n(t,x)\dy\dx, && t>0,\\
&h_n(0)=h(T),\;\; u_n(0,x)=u(T,x), && x\in[0,h(T)].
\end{aligned}\right.
 \lbl{3.y} \ees
 Since spreading happens for \eqref{1.2}, we can conclude from Theorem \ref{t3.5} that spreading happens for $(u_n,h_n)$ by choosing $T$ large enough.

(1)\, For any $n\ge N$, we claim that the semi-wave problem
\bes\label{3.7}\left\{\begin{array}{lll}
 d\dd\int_{-\yy}^{0}\!J_n(x-y)\phi_n(y)\dy-d\phi_n+c\phi_n'+ f(\phi_n)=0,\;\;-\yy<x<0,\\[2mm]
\phi_n(-\yy)=u^*_{n},\ \ \phi_n(0)=0, \ \ c_n=\mu\dd\int_{-\yy}^{0}\int_{0}^{\yy}\!J_n(x-y)\phi_n(x)\dy\dx
 \end{array}\right.
 \ees
has a unique solution pair $(c_n,\phi_n)$ with $c_n>0$ and $\phi_n$ nonincreasing, where  $u^*_{n}$ is the unique positive root of equation $d(\|J_n\|_1-1)u+f(u)=0$. Clearly, $u^*_{n}\le u^*_{n+1}\le u^*$ and $\lim_{n\to\yy}u_n^*=u^*$.

We may rewrite the above semi-wave problem as the equivalent form
 \bess\left\{\begin{array}{lll}
 d_n\dd\int_{-\yy}^{0}\!\hat J_n(x-y)\phi_n(y)\dy-d_n\phi_n+c\phi_n'+d(\|J_n\|_1-1)\phi_n+ f(\phi_n)=0,\;\;-\yy<x<0,\\[2mm]
\phi_n(-\yy)=u^*_{n},\ \ \phi_n(0)=0, \ \ c_n=\mu_n\dd\int_{-\yy}^{0}\int_{0}^{\yy}\!\hat J_n(x-y)\phi_n(x)\dy\dx,
 \end{array}\right.
 \eess
 where $d_n=d\|J_n\|_1$, $\hat J_n=J_n/\|J_n\|_1$ and $\mu_n=\mu \|J_n\|_1$. Since $J_n$ has a compact support, the condition {\bf(J1)} holds for $J_n$. Then our conclusion directly follows from \cite[Theorem 1.2]{DLZ}.

(2)\, We prove $\lim_{n\to\yy}c_n=c_0$ and $\lim_{n\to\yy}\phi_n=\phi^{c_0}$ locally uniformly in $(-\yy,0]$. Since $J_n$ is nondecreasing in $n$ and $J_n\le J$, we can see from \cite[Lemma 2.8]{DN21} that $c_{n}\le c_{n+1}\le c_0$ and $\phi_n\le\phi_{n+1}\le\phi^{c_0}$.
By the monotonicity and boundedness of $\phi_n$, there exists $\phi_{\yy}\ge\phi_N$ such that $\phi_n$ converges to $\phi_{\yy}$ in $(-\yy,0]$. Besides, due to $0\le\phi_n\le u^*$ and
 \[\sup_{n\ge N}|\phi'_n(x)|\le \frac{2du^*+\max_{u\in[0,u^*]}f(u)}{c_N}<\yy ~ ~ {\rm in ~}(-\yy,0],\]
in view of a consideration of compactness we have $\phi_{\yy}\in C(\mathbb{R})$ and $\lim_{n\to\yy}\phi_n=\phi_{\yy}$ locally uniformly in $(-\yy,0]$. Let $c_\yy=\lim_{n\to\yy}c_n\le c_0$. Next we show $c_\yy=c_0$ and $\phi_{\yy}=\phi^{c_0}$.

 For any given $x\in(-\yy,0)$, it can be seen from \eqref{3.7} that
 \[c_n\phi_n(x)-c_n\phi_n(0)=-\int_{0}^{x}\left(d\dd\int_{-\yy}^{0}\!J_n(z-y)\phi_n(y)\dy-d\phi_n(z)+f(\phi_n(z))\right){\rm d}z.\]
 Thanks to the dominated convergence theorem, we have
 \[c_\yy\phi_\yy(x)-c_\yy\phi_\yy(0)=-\int_{0}^{x}\left(d\dd\int_{-\yy}^{0}\!J(z-y)
 \phi_\yy(y)\dy-d\phi_\yy(z)+ f(\phi_\yy(z))\right){\rm d}z.\]
Therefore,
\[ d\dd\int_{-\yy}^{0}\!J(x-y)\phi_\yy(y)\dy-d\phi_\yy+c_\yy\phi_\yy'+ f(\phi_\yy)=0 ~ ~ {\rm in~ } (-\yy,0).\]
Combining this with $\phi_\yy(-\yy)\ge\phi_N(-\yy)>0$, we easily derive $\phi_\yy(-\yy)=u^*$. Moreover, by the monotone convergence theorem,
 \[c_\yy=\lim_{n\to\yy}c_n=\lim_{n\to\yy}\mu\dd\int_{-\yy}^0
 \int_{0}^{\yy}\!\!J_n(x-y)\phi_n(x)\dy\dx=\mu\dd\int_{-\yy}^0
 \int_{0}^{\yy}\!\!J(x-y)\phi_\yy(x)\dy\dx.\]
Using \cite[Theorem 1.2]{DLZ}, we get $c_\yy=c_0$ and $\phi_{\yy}=\phi^{c_0}$.

(3)\, Let $U\in C(\overline{\mathbb{R}}^+)$ be the unique bounded positive solution of \eqref{2.4}. For $n\ge N$, by Lemma \ref{l2.4}, we easily know that the steady state problem
 \bess
  d\dd\int_{0}^{\yy}J_n(x-y)U(y)\dy-dU+f(U)=0 {\rm ~ ~ ~ in ~ }\;\overline{\mathbb{R}}^+
  \eess
has a unique bounded positive solution $U_n\in C(\overline{\mathbb{R}}^+)$, which is non-decreasing in $\overline{\mathbb{R}}^+$. Moreover, $0<U_n<u^*_n$ and $\lim_{x\to\yy}U_n(x)=u^*_n$. Since $J_n\le J_{n+1}$, by similar considerations with Lemma \ref{l2.4}, we have $U_n\le U_{n+1}\le U$. Define $U_{\yy}=\lim_{n\to\yy}U_n$. Then  $U_{\yy}>0$ and is nondecreasing. By the dominated convergence theorem,
  \[d\dd\int_{0}^{\yy}J(x-y)U_{\yy}(y)\dy-dU_{\yy}+f(U_{\yy})=0 {\rm ~ ~ in ~ }\;\overline{\mathbb{R}}^+.\]
Then we easily deduce $U_{\yy}(\yy)=u^*$. By Lemma \ref{l2.4} and Dini's theorem,  $U_{\yy}\equiv U$ and $\lim_{n\to\yy}U_n=U$ locally uniformly in $\overline{\mathbb{R}}^+$.

(4)\, Now we prove the first inequality of \eqref{3.4}. For $0<\ep\ll 1$ and $L\gg 1$, we define
  \[\underline{h}(t)=(1-\ep)\frac{U_n(0)}{u^*_n}c_nt+2L, ~ ~ \underline{u}(t,x)=(1-\ep)\frac{U_n(0)}{u^*_n}\phi_n(x-\underline{h}(t)).\]
We will check that, for some $T_1>0$, $(\underline{u},\,\underline{h})$ satisfies
 \bes
\left\{\begin{aligned}
&\underline u_t\le d\dd\int_{0}^{\underline h(t)}J_n(x-y)\underline u(t,y)\dy-d\underline u+ f(\underline u), && t>0,~L<x<\underline h(t),\\
&\underline u(t,\underline h(t))\le0,&& t>0,\\
&\underline h'(t)\le\mu\dd\int_{0}^{\underline h(t)}\int_{\underline h(t)}^{\infty}
J_n(x-y)\underline u(t,x)\dy\dx,&& t>0,\\
&\underline u(t,x)\le u_n(t+T_1,x),&& t>0, ~0\le x\le L,\\
&\underline h(0)\le h_n(T_1),\;\;\underline u(0,x)\le u_n(T_1,x),&& x\in[0,\underline h(0)].
\end{aligned}\right.
 \lbl{3.x}\ees
As spreading happens for $(u_n,h_n)$, by Theorem \ref{t3.2}, there exists a $T_1>0$ such that $h_n(T_1)>2L=\underline h(0)$,
and $\underline{u}(t,x)\le (1-\ep)U_n(0)\le (1-\ep)U_n(x)\le u_n(t+T_1,x)$ for $t\ge 0$ and $x\in[0,2L]$ which implies $\underline u(0,x)\le u_n(T_1,x)$ in $[0,\underline h(0)]$.
Since $J_n(x)=0$ for $|x|\ge L$, we have
 \bess
\mu\dd\int_{0}^{\underline h(t)}\!\int_{\underline h(t)}^{\infty}\!
J_n(x-y)\underline u(t,x)\dy\dx&=&\mu(1-\ep)\frac{U_n(0)}{u^*_n}\dd\int_{0}^{\underline h(t)}\!\int_{\underline h(t)}^{\infty}\!J_n(x-y)\phi_n(x-\underline{h}(t))\dy\dx\\
&=&\mu(1-\ep)\frac{U_n(0)}{u^*_n}\dd\int_{-\underline h(t)}^{0}\!\int_{0}^{\infty}\!J_n(x-y)\phi_n(x)\dy\dx\\
&=&\mu(1-\ep)\frac{U_n(0)}{u^*_n}\dd\int_{-\yy}^{0}\!\int_{0}^{\infty}\!J_n(x-y)\phi_n(x)\dy\dx\\
&=&(1-\ep)\frac{U_n(0)}{u^*_n}c_n=\underline{h}'(t).
 \eess
Moreover, by denoting $z:=z(x,t)=x-\underline{h}(t)$, we have
\bess
\underline{u}_t&=&-(1-\ep)^2\left[\frac{U_n(0)}{u^*_n}\right]^2c_n\phi_n'(z)\le -(1-\ep)\frac{U_n(0)}{u^*_n}c_n\phi_n'(z)\\
&=&(1-\ep)\frac{U_n(0)}{u^*_n}\left(d_n\!\dd\int_{-\yy}^{\underline{h}(t)}\!\hat J_n(x-y)\phi_n(z(y,t))\dy-d_n\phi_n(z)+d(\|J_n\|_1-1)\phi_n(z)\right)\\
&&+(1-\ep) f(\phi_n(z))\\
&\le& d\dd\int_{0}^{\underline{h}(t)}J_n(x-y)\underline{u}(t,y)\dy
-d\underline{u}+d(\|J_n\|_1-1)\underline{u}+ f(\underline{u})\\
&\le&d\dd\int_{0}^{\underline{h}(t)}J_n(x-y)\underline{u}\dy-d\underline{u}+ f(\underline{u}).
 \eess
Hence, \eqref{3.x} holds. Applying Theorem \ref{t3.4} to \eqref{3.y} and \eqref{3.x}, we derive
$\underline h(t)\le h_n(t+T_1)$. As $J\ge J_n$, by Theorem \ref{t3.3}, $h(t+T)\geq h_n(t)$, and so $h(t+T+T_1)\geq h_n(t+T_1)\ge \underline h(t)$ for $n\ge N$. By the expression of $\underline h(t)$ and arbitrariness of $\ep$, it follows that
 \bes\label{3.9}
 \liminf_{t\to\yy}\frac{h(t)}{t}\ge \frac{U_n(0)}{u^*_n}c_n.
 \ees
Noticing $\lim_{n\to\yy}(U_n, u_n^*, c_n)=(U, u^*, c_0)$, by $n\to\yy$ in \eqref{3.9} we deduce the desired result.

{\bf Step 3}:\, {\it The proof of \eqref{3.5}}.
From the arguments of Step 2 we see that \eqref{3.9} still holds and $\lim_{n\to\yy}(U_n, u_n^*)=(U, u^*)$ when the condition {\bf(J1)} does not hold. Similar to the proof of conclusion (2)\,(ii) of \cite[Proposition 5.1]{DN213} we can show $\lim_{n\to\yy}c_n=\yy$ (the details are omitted here). Hence, by \eqref{3.9}, the conclusion \eqref{3.5} holds.\end{proof}

\section{Dynamics of the problem \eqref{1.3}}
\setcounter{equation}{0} {\setlength\arraycolsep{2pt}

Similarly to Section 3, one can directly derive analogous results on solution $(u,h)$ of problem \eqref{1.3}. So we just give these main conclusions, and omit the details of their proofs. Besides, we will pay our attention to the spreading speed for \eqref{1.3}.
\begin{theorem}[Existence and uniqueness]\label{t4.1}
  Problem \eqref{1.3} has a unique global solution $(u,h)$. Moreover, for any $T>0$, $u\in C(\overline D^T)$, $h\in C^1([0,T])$, and $0\le u\le \max\{\|u_0\|_{\yy},K\}$.
\end{theorem}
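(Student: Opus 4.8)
The plan is to follow the scheme already used for Theorem~\ref{t3.1} (and for \cite[Theorem~2.1]{CDLL}), combining ODE theory, the contraction mapping principle and the maximum principle of Lemma~\ref{l2.1}. The only structural difference between \eqref{1.2} and \eqref{1.3} is that the constant dispersal loss $-du$ is replaced by $-dj(x)u$ with $j(x)=\int_0^\yy J(x-y)\dy$; since $j\in C(\overline{\mathbb{R}}^+)$ with $1/2\le j\le 1$, this variable coefficient is bounded and continuous and enters all estimates exactly as a constant would, so the argument transfers essentially verbatim. First I would prove local existence and uniqueness on a short interval $[0,T_0]$ by a fixed-point argument for the coupled pair $(u,h)$. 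For a frozen boundary $h\in\mathbb H_{h_0}^{T_0}$ the $u$-equation carries no spatial derivative and has no regularizing effect, so it is an integral-differential (ODE-in-time) equation; writing it in the integral form
\[
u(t,x)=u_0(x)+\int_0^t\Big[d\int_0^{h(s)}J(x-y)u(s,y)\dy-dj(x)u(s,x)+f(s,x,u(s,x))\Big]{\rm d}s,
\]
the right-hand side defines a map on $C(\overline{D}_h^{T_0})$ which, using the local Lipschitz bound \textbf{(F1)}(i) for $f$, the bound $\sup_{\R}J<\yy$ from \textbf{(J)}, and the boundedness of $j$, is a contraction for $T_0$ small (or under a suitable weighted norm). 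This yields a unique $u=u_h$ for each admissible $h$.

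Next I would close the coupling through the boundary condition: define
\[
\Theta(h)(t)=h_0+\mu\int_0^t\int_0^{h(s)}\int_{h(s)}^{\yy}J(x-y)u_h(s,x)\dy\dx\,{\rm d}s,
\]
check that $\Theta$ maps a suitable closed subset of $\mathbb H_{h_0}^{T_0}$ into itself, and show it is a contraction after shrinking $T_0$; a Banach fixed point then gives a solution $(u,h)$ of \eqref{1.3} on $[0,T_0]$, with $u\in C(\overline{D}_h^{T_0})$ and, since $h'$ equals a continuous double integral of the continuous $u$, $h\in C^1$. The main obstacle is precisely this last contraction: estimating $\Theta(h_1)-\Theta(h_2)$ forces one to control $u_{h_1}-u_{h_2}$ in terms of $h_1-h_2$ while the two functions live over the different moving domains $[0,h_1(t)]$ and $[0,h_2(t)]$, so the differences of the nonlocal integrals over mismatched intervals must be absorbed; this is handled, as in \cite{CDLL}, by extending the $u_h$ suitably and exploiting $\sup_\R J<\yy$ together with the uniform modulus of the difference quotients built into the definition of $\mathbb H_{h_0}^{T_0}$.

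Finally I would establish the a priori bounds and upgrade the local solution to a global one. Nonnegativity $u\ge0$ follows from Lemma~\ref{l2.1}: writing $f(t,x,u)=b(t,x)u$ with $b$ bounded (by \textbf{(F1)}(i) and $f(t,x,0)\equiv0$), the equation takes the admissible form $u_t=d\int_0^{h(t)}J(x-y)u\dy+cu$ with $c=-dj(x)+b\in L^\yy$, and the initial/boundary data are nonnegative. For the upper bound set $M=\max\{\|u_0\|_\yy,K\}$; the constant $M$ is a supersolution because
\[
d\int_0^{h(t)}J(x-y)M\dy-dj(x)M\le d\Big(\int_0^{\yy}J(x-y)\dy-j(x)\Big)M=0
\]
(this is the one place where the coefficient $j(x)$, rather than $d$, is exactly what makes the linear terms cancel), while $f(t,x,M)\le0$ since $M\ge K$ by \textbf{(F1)}(ii). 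Hence the comparison principle (the counterpart of Theorem~\ref{t3.3} for \eqref{1.3}) gives $0\le u\le M$ on the whole existence interval. This time-independent bound then controls the free boundary: using $u\le M$ and $\int_s^\yy J\le1$,
\[
h'(t)=\mu\int_0^{h(t)}\int_{h(t)}^{\yy}J(x-y)u\dy\dx\le\mu M\int_0^{h(t)}\int_{h(t)-x}^{\yy}J(w)\,{\rm d}w\,\dx\le\mu M\,h(t),
\]
so $h(t)\le h_0e^{\mu Mt}$ stays finite on every bounded interval. Consequently $(u,h)$ cannot blow up in finite time, and the standard continuation argument extends it to a unique global solution with the stated properties.
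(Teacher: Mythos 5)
Your proposal is correct and follows essentially the same route the paper intends: the authors explicitly omit the proof, stating that it is derived exactly as Theorem~\ref{t3.1} (via the ODE/contraction-mapping/maximum-principle scheme of \cite[Theorem 2.1]{CDLL}), and you carry that scheme over, correctly isolating the only new point — that the coefficient $dj(x)$ is bounded with $1/2\le j\le 1$ and makes the constant $M=\max\{\|u_0\|_\yy,K\}$ an exact supersolution for the linear part.
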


\begin{theorem}[Spreading-vanishing dichotomy]\label{t4.2} One of the following alternatives holds for \eqref{1.3}:

\sk{\rm(1)}\, \underline{Spreading:} $h_{\yy}=\yy$ and $\lim_{t\to\yy}u=u^*$ locally uniformly in $\overline{\mathbb{R}}^+$;

\sk{\rm(2)}\, \underline{Vanishing:} $h_{\yy}<\yy$ and $\lim_{t\to\yy}\|u(t,\cdot)\|_{C([0,h(t)])}=0$.
\end{theorem}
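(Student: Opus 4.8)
The plan is to mirror the proof of the dichotomy for \eqref{1.2} (Theorem \ref{t3.2}), replacing the operator $\mathcal{L}^d_{(0,\,l)}$ and the half-space results of Lemmas \ref{l2.4}--\ref{l2.5} by the Neumann-type operator $\mathcal{L}^N_{(0,\,l)}$ and the fixed-boundary results of Lemma \ref{l2.6}. Since $h'(t)>0$ for $t>0$, the limit $h_\yy=\lim_{t\to\yy}h(t)\le\yy$ is well defined, and the two alternatives correspond exactly to $h_\yy<\yy$ (vanishing) and $h_\yy=\yy$ (spreading). Accordingly I would prove two lemmas paralleling Lemmas \ref{l3.1} and \ref{l3.2} and then combine them; the comparison principles for \eqref{1.3} (the analogues of Theorems \ref{t3.3}--\ref{t3.4}, valid as remarked after Theorem \ref{t3.4}) are used throughout.

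For the \textbf{vanishing case} I would first show that $h_\yy<\yy$ forces $\lambda_p(\mathcal{L}^N_{(0,\,h_\yy)}+f'(0))\le0$. Arguing by contradiction, suppose this principal eigenvalue is positive; by the continuity and monotonicity in $l$ of Lemma \ref{l2.3}(1) there is a small $\ep>0$ with $\lambda_p(\mathcal{L}^N_{(0,\,h_\yy-\ep)}+f'(0))>0$, and a time $T$ after which $h(t)>h_\yy-\ep$. Comparing $u$ from below with the solution of the fixed-boundary problem \eqref{2.7} on $[0,h_\yy-\ep]$ started from $u(T,\cdot)$, Lemma \ref{l2.6}(1) gives convergence of that solution to the positive steady state $\tilde u_{h_\yy-\ep}$, so $u(t,\cdot)$ is bounded below near the boundary by a fixed positive function for all large $t$. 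Inserting this into the free-boundary law and using $J(0)>0$ (hence positivity of the double integral) yields a positive lower bound for $h'(t)$, contradicting $h_\yy<\yy$. With $\lambda_p(\mathcal{L}^N_{(0,\,h_\yy)}+f'(0))\le0$ in hand, I would extend $u(0,\cdot)$ by zero and dominate $u$ from above by the solution of \eqref{2.7} on $[0,h_\yy]$ (the zero-extended $u$ is a subsolution, since on the strip $(h(t),h_\yy]$ it vanishes while the nonlocal term stays nonnegative); Lemma \ref{l2.6}(2) then gives $\lim_{t\to\yy}u=0$ in $C([0,h_\yy])$, whence $\lim_{t\to\yy}\|u(t,\cdot)\|_{C([0,h(t)])}=0$.

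For the \textbf{spreading case} $h_\yy=\yy$ I would prove $\lim_{t\to\yy}u=u^*$ locally uniformly by squeezing. For the lower bound, Lemma \ref{l2.3}(2) gives $\lambda_p(\mathcal{L}^N_{(0,\,l)}+f'(0))\to f'(0)>0$ as $l\to\yy$, so this eigenvalue is positive for all large $l$; since $h(t)>l$ for $t\ge T(l)$, comparing $u$ from below with the solution of \eqref{2.7} on $[0,l]$ and applying Lemma \ref{l2.6}(1) gives $\liminf_{t\to\yy}u(t,x)\ge\tilde u_l(x)$, and letting $l\to\yy$ via Lemma \ref{l2.6}(3) yields $\liminf_{t\to\yy}u(t,x)\ge u^*$ locally uniformly. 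For the upper bound I would dominate $u$ from above by the solution $\bar w$ of the half-space Neumann problem with constant initial datum $\max\{\|u_0\|_\yy,K\}$; the half-space counterpart of Lemma \ref{l2.5}, together with the fact that its unique bounded positive steady state is the constant $u^*$ (this is exactly $\tilde U\equiv u^*$ from the proof of Lemma \ref{l2.6}(3)), gives $\bar w\to u^*$ and hence $\limsup_{t\to\yy}u(t,x)\le u^*$ locally uniformly. Combining the two bounds proves alternative (1), and the two lemmas together give the dichotomy.

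I expect the main obstacle to be the vanishing direction, specifically turning $\lambda_p(\mathcal{L}^N_{(0,\,h_\yy)}+f'(0))>0$ into a uniform-in-time positive lower bound for $h'(t)$: this requires upgrading the convergence to a positive steady state from Lemma \ref{l2.6}(1) into a lower bound that is uniform on a boundary strip and for all large $t$, so that the double integral defining $h'(t)$ stays bounded away from $0$. Because there is no regularizing effect, this must be argued purely through the comparison principle and the positivity $J(0)>0$. A secondary technical point is supplying the half-space Neumann analogue of Lemma \ref{l2.5} used in the upper bound; this is routine once one observes, via Lemma \ref{l2.6}(3), that the corresponding half-space steady state collapses to the constant $u^*$, which is precisely why the spreading limit here is the constant $u^*$ rather than the nonconstant profile $U$ appearing for \eqref{1.2}.
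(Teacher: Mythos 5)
Your proposal is correct and follows essentially the same route the paper intends: the paper omits this proof as "analogous to Section 3," and your argument is precisely that analogue, with $\mathcal{L}^N_{(0,\,l)}$, Lemma \ref{l2.3} and Lemma \ref{l2.6} replacing $\mathcal{L}^d_{(0,\,l)}$ and Lemmas \ref{l2.4}--\ref{l2.5} in the proofs of Lemmas \ref{l3.1}--\ref{l3.2}. The step you flag as the main obstacle is in fact immediate, since Lemma \ref{l2.6}(1) already gives convergence to $\tilde u_{h_\yy-\ep}$ \emph{uniformly} in $C([0,h_\yy-\ep])$, so $u(t,\cdot)\ge\frac12\tilde u_{h_\yy-\ep}$ on the whole interval for all large $t$ and the double integral in the free-boundary law is bounded below exactly as in Lemma \ref{l3.1}.
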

\begin{theorem}[Spreading-vanishing criteria]\label{t4.3} Let $(u,h)$ be the unique solution of \eqref{1.3}.
\begin{enumerate}\vspace{-2mm}
\item[{\rm(1)}]\, If $f'(0)\ge d/2$, then spreading happens;\vspace{-2mm}
\item[{\rm(2)}]\, If $f'(0)<d/2$, then there exists a unique $\ell^*_N>0$ such that spreading happens when $h_0\ge\ell^*_N$;\vspace{-2mm}
\item[{\rm(3)}]\, If $f'(0)<d/2$ and $h_0<\ell^*_N$, then there exists $\mu^*_N>0$ such that spreading happens if and only if $\mu>\mu^*_N$, where $\ell^*_N$ is determined by similar arguments with $\ell^*_D$ in Section 3. Indeed, it follows from Lemma \ref{l2.3} and $f'(0)< d/2$ that there exists a unique $\ell^*_N>0$ such that $\lambda_p(\mathcal{L}^N_{(0,\,\ell^*_N)}+f'(0))=0$ and $\lambda_p(\mathcal{L}^N_{(0,\,l)}+f'(0))(l-\ell^*_N)>0$ for all $l>0$.\vspace{-2mm}
\end{enumerate}
\end{theorem}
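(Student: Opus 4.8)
The plan is to mirror the argument of Theorem \ref{t3.5} for problem \eqref{1.2}, replacing the operator $\mathcal{L}^d_{(0,l)}$ by its Neumann counterpart $\mathcal{L}^N_{(0,l)}$ and invoking the properties of $\lambda_p(\mathcal{L}^N_{(0,l)}+f'(0))$ recorded in Lemma \ref{l2.3}, together with the longtime behaviour of the fixed-boundary problem \eqref{2.7} from Lemma \ref{l2.6}. First I would establish the analogue of Lemma \ref{l3.1}: if $h_\yy<\yy$, then $\lambda_p(\mathcal{L}^N_{(0,\,h_\yy)}+f'(0))\le0$ and $\lim_{t\to\yy}\|u(t,\cdot)\|_{C([0,h(t)])}=0$. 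The proof is a transcription of the one for \eqref{1.2}: argue by contradiction, use the positive steady state of \eqref{2.7} on a slightly shrunk interval (Lemma \ref{l2.6}(1)) to bound $u$ from below, and thereby force $h'(t)$ to stay bounded below by a positive constant, contradicting $h_\yy<\yy$. The only change is that the diffusion term now carries the factor $j(x)$ and the relevant eigenvalue is $\lambda_p(\mathcal{L}^N_{(0,l)}+f'(0))$.

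For part (1), Lemma \ref{l2.3}(1),(3) give that $\lambda_p(\mathcal{L}^N_{(0,l)}+f'(0))$ is strictly increasing in $l$ with $\lim_{l\to0}\lambda_p(\mathcal{L}^N_{(0,l)}+f'(0))=f'(0)-d/2$; hence $f'(0)\ge d/2$ forces $\lambda_p(\mathcal{L}^N_{(0,l)}+f'(0))>0$ for every $l>0$, and the analogue of Lemma \ref{l3.1} rules out $h_\yy<\yy$, so spreading occurs. For part (2), $f'(0)<d/2$ gives $\lim_{l\to0}\lambda_p(\mathcal{L}^N_{(0,l)}+f'(0))=f'(0)-d/2<0$, while Lemma \ref{l2.3}(2) and {\bf(F3)} give $\lim_{l\to\yy}\lambda_p(\mathcal{L}^N_{(0,l)}+f'(0))=f'(0)>0$; continuity and strict monotonicity (Lemma \ref{l2.3}(1)) then yield a unique $\ell^*_N>0$ with $\lambda_p(\mathcal{L}^N_{(0,\,\ell^*_N)}+f'(0))=0$ and $\lambda_p(\mathcal{L}^N_{(0,l)}+f'(0))(l-\ell^*_N)>0$ for all $l>0$. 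If $h_0\ge\ell^*_N$, then since $h'(t)>0$ for $t>0$ we have $h_\yy>h_0\ge\ell^*_N$; were $h_\yy<\yy$, the analogue of Lemma \ref{l3.1} would give $h_\yy\le\ell^*_N$, a contradiction, so $h_\yy=\yy$ and spreading happens.

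For part (3), with $f'(0)<d/2$ and $h_0<\ell^*_N$ fixed, I would prove the two extreme-$\mu$ statements. For small $\mu$, I construct an upper solution exactly as in Lemma \ref{l3.5}: pick $h_1\in(h_0,\ell^*_N)$ so that $\lambda_1:=\lambda_p(\mathcal{L}^N_{(0,h_1)}+f'(0))<0$, use its positive eigenfunction $\phi_1$ to build $u_1=C{\rm e}^{\lambda_1 t/2}\phi_1$ dominating $u$ on $[0,h_1]$, and set $\bar h(t)=h_0+\mu h_1 C\int_0^t{\rm e}^{\lambda_1 s/2}{\rm d}s$, which remains below $h_1$ for $0<\mu\le\underline\mu_N$ with a suitable threshold; the comparison principle (Theorem \ref{t3.3}, valid for \eqref{1.3} as noted after Theorem \ref{t3.4}) then gives $h_\yy\le h_1<\yy$, i.e. vanishing. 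For large $\mu$, the analogue of Lemma \ref{l3.6} drives $h_\yy$ past $\ell^*_N$, so part (2) yields spreading. Finally, monotone and continuous dependence of $(u,h)$ on $\mu$ produces a sharp threshold $\mu^*_N>0$ separating the two regimes, exactly as in Lemma \ref{l3.8}. I expect the only genuinely delicate point to be the verification of the analogue of Lemma \ref{l3.1} in the Neumann setting, namely ensuring that the lower bound on $h'(t)$ obtained from the surviving steady state is strictly positive despite the extra factor $j(x)$; everything else is a direct transcription of Section \ref{s3} with the threshold $d$ replaced by $d/2$, the replacement being justified precisely by the shift $a_0-d/2$ in Lemma \ref{l2.3}(3).
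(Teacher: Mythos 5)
Your proposal is correct and follows exactly the route the paper intends: the paper omits the proof of Theorem \ref{t4.3}, stating only that it is obtained ``similarly to Section 3,'' and your transcription (the Neumann analogue of Lemma \ref{l3.1} via Lemma \ref{l2.6}, the $d/2$ threshold and the existence of $\ell^*_N$ from Lemma \ref{l2.3}, and the analogues of Lemmas \ref{l3.5}, \ref{l3.6} and \ref{l3.8} for the sharp $\mu^*_N$) supplies precisely the details the authors leave out. The one point you flag as delicate — the positivity of the lower bound on $h'(t)$ despite the factor $j(x)$ — goes through verbatim since the steady state $W$ of \eqref{2.7} is still positive on $[0,h_\yy-\ep]$ and the boundary integrand is unchanged.
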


Now we study the spreading speed of problems \eqref{1.3}. The following theorem will be proved by several lemmas.
\begin{theorem}[Spreading speed]\label{t4.4}\, Let {\bf(F1)}-{\bf(F3)} hold and spreading happen for \eqref{1.3}. Then
 \begin{align*}
\lim_{t\to\yy}\frac{h(t)}{t}=\left\{\begin{aligned}
&c_0& &{\rm if~{\bf(J1)}~is~satisfied},\\
&\yy& &{\rm if~{\bf(J1)}~is~not~satisfied},
\end{aligned}\right.
  \end{align*}
  where $c_0$ is uniquely given by the semi-wave problem \eqref{3.3}.
\end{theorem}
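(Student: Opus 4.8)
The plan is to establish the two-sided bound $\limsup_{t\to\yy}h(t)/t\le c_0\le\liminf_{t\to\yy}h(t)/t$ when {\bf(J1)} holds, and to replace the upper bound by the trivial $+\yy$ when {\bf(J1)} fails, the lower-bound argument then forcing $h(t)/t\to\yy$. As in Theorem \ref{t3.6} everything is driven by the semi-wave $(c_0,\phi^{c_0})$ of \eqref{3.3} together with its compactly supported approximations, but the new feature of \eqref{1.3} is the loss term $-dj(x)u$ with $j(x)=\int_0^\yy J(x-y)\dy\in[1/2,1)$, which is \emph{strictly smaller} than the $-du$ of \eqref{1.2}. I would treat the two bounds separately.

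For the upper bound I would use the profile of Step 1 of Theorem \ref{t3.6}: set $\bar h(t)=(1+\ep)c_0t+L$ and $\bar u(t,x)=(1+\ep)\phi^{c_0}(x-\bar h(t))$, and check that $(\bar u,\bar h)$ is an upper solution of \eqref{1.3} for $L$ large. The boundary identity $\bar u(t,\bar h(t))=0$ and the free-boundary inequality go through exactly as for \eqref{1.2}. The delicate point is the first inequality: the smaller loss $-dj(x)\bar u\ge-d\bar u$ produces an extra nonnegative source $d(1-j(x))\bar u$ that is $O(1)$ near $x=0$, far behind the front, where $-{\phi^{c_0}}'\approx0$, so the slack from $(1+\ep)^2$ versus $(1+\ep)$ is useless there. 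The resolution is to exploit the monotonicity of $\phi^{c_0}$: writing $\zeta=x-\bar h$, the gain deficit obeys, since $\eta\le-\bar h\le\zeta$ forces $\phi^{c_0}(\eta)\ge\phi^{c_0}(\zeta)$,
\bess
\int_{-\yy}^{-\bar h}J(\zeta-\eta)\phi^{c_0}(\eta)\,{\rm d}\eta\ge\phi^{c_0}(\zeta)\int_{-\yy}^{-\bar h}J(\zeta-\eta)\,{\rm d}\eta=\phi^{c_0}(\zeta)(1-j(x)),
\eess
and this lower bound on the deficit exactly cancels the extra source; combined with $(1+\ep)f(\phi^{c_0})\ge f((1+\ep)\phi^{c_0})$ (from {\bf(F2)}) the first inequality follows. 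The \eqref{1.3}-analogue of Theorem \ref{t3.3} then yields $h(t+T)\le\bar h(t)$, whence $\limsup h(t)/t\le(1+\ep)c_0\to c_0$.

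For the lower bound I would follow Step 2 of Theorem \ref{t3.6}: approximate $J$ by compactly supported $J_n\nearrow J$, take the semi-wave pairs $(c_n,\phi_n)$ of \eqref{3.7} with $c_n\nearrow c_0$ and $\phi_n\to\phi^{c_0}$, and use $\underline h(t)=(1-\ep)c_nt+2L$, $\underline u(t,x)=(1-\ep)\phi_n(x-\underline h(t))$. The crucial improvement over \eqref{1.2} is that the multiplicative factor is now $1$ rather than $U(0)/u^*$: by Theorem \ref{t4.2}(1), spreading forces $u(t,x)\to u^*$ locally uniformly, in particular near $x=0$, so for large $T_1$ one has $u(t+T_1,x)\ge(1-\ep)u^*\ge\underline u(t,x)$ on a fixed interval $[0,L]$, which is precisely what the comparison Theorem \ref{t3.4} requires. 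The subsolution inequality is now \emph{helped} by the smaller loss, and the only error is the gain deficit $\int_{-\yy}^{-\underline h}J_n(\zeta-\eta)\phi_n(\eta)\,{\rm d}\eta$ coming from the compact support of $J_n$; this is nonzero only where $x\le 2n$, and there $\zeta=x-\underline h\to-\yy$ makes $\phi_n(\zeta)$ within any prescribed fraction of $u_n^*$ once $L$ is large, so the required inequality reduces to $\int_x^\yy(J-J_n)\ge\delta\int_x^\yy J$, which holds because $\int_x^\yy(J-J_n)\ge\int_{2n}^\yy J>0$ for such $x$. This gives $\liminf h(t)/t\ge(1-\ep)c_n$; letting $n\to\yy$ and $\ep\to0$ yields $\liminf h(t)/t\ge c_0$, completing the {\bf(J1)} case.

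Finally, when {\bf(J1)} fails the same lower-bound construction applies verbatim, and as in Step 3 of Theorem \ref{t3.6} (through the argument behind \cite[Proposition 5.1]{DN213}) the approximate speeds satisfy $c_n\to\yy$; hence $\liminf h(t)/t\ge(1-\ep)c_n\to\yy$, i.e.\ $h(t)/t\to\yy$. The main obstacle is the first inequality of the upper-bound step: unlike \eqref{1.2}, the reduced loss $-dj(x)u$ a priori permits faster propagation, and only the monotonicity of the semi-wave—through the deficit estimate displayed above—prevents the speed from exceeding $c_0$.
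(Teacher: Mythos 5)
Your overall architecture (two-sided bound under {\bf(J1)}, lower bound alone forcing $h(t)/t\to\yy$ otherwise) matches the paper, and your upper bound is exactly the paper's Lemma \ref{l4.2}: your ``deficit'' estimate $\int_{-\yy}^{-\bar h}J(\zeta-\eta)\phi^{c_0}(\eta)\,{\rm d}\eta\ge\phi^{c_0}(\zeta)(1-j(x))$ is precisely the paper's observation that $d\int_{-\yy}^{0}J(x-y)[\bar u(t,y)-\bar u(t,x)]\dy\ge0$ by monotonicity of the semi-wave. Your lower bound, however, takes a genuinely different and leaner route. The paper (Lemma \ref{l4.1}) runs a two-stage comparison: it introduces an auxiliary free boundary problem with kernel $J_n$, Neumann-type loss $-dj_n(x)u_n$ and the perturbed nonlinearity $\tilde f(u)=-\delta u+f(u)$, needs the compatibility condition \eqref{4.1} to compare that problem with \eqref{1.3}, and then passes to the double limit $n\to\yy$, $\delta\to0$. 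You instead build a subsolution of \eqref{1.3} directly from the semi-wave $(c_n,\phi_n)$ of \eqref{3.7} with the original $f$: the full loss $-d\phi_n$ in the semi-wave equation dominates $-dj(x)\underline u$ because $j\le1$, $J_n\le J$ handles the gain term on $[0,\underline h]$, and the factor $1$ (rather than $U(0)/u^*$) is available because Theorem \ref{t4.2}(1) gives $u\to u^*$ near $x=0$. This avoids $\tilde f$ and the double limit entirely, and is a real simplification.

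There is one concrete flaw in your verification, though it is easily repaired. To absorb the leftover term $d\int_{-\yy}^{0}J_n(x-y)\underline u(t,y)\dy$ you reduce matters to $\int_{x}^{\yy}(J-J_n)\ge\delta\int_{x}^{\yy}J$ for $L<x\le 2n$, justified by ``$\int_{x}^{\yy}(J-J_n)\ge\int_{2n}^{\yy}J>0$.'' Condition {\bf(J)} allows $J$ to be compactly supported, in which case $\int_{2n}^{\yy}J=0$ for large $n$ (indeed $J_n\equiv J$), and then the inequality you need is false wherever $\int_x^\yy J>0$: since $\phi_n$ is nonincreasing, $\int_{-\yy}^{0}J_n(x-y)\underline u(t,y)\dy\ge (1-j(x))\underline u(t,x)$ with the inequality going the \emph{wrong} way, so the term cannot be absorbed by $d(1-j(x))\underline u$ alone on $L<x<2n$. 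The fix is to make the term vanish rather than absorb it: choose $L$ so large that $J_n(z)=0$ for $|z|\ge L$ (i.e.\ $L\ge 2n$, exactly the choice made in Step 2(4) of Theorem \ref{t3.6} and in Step 5 of Lemma \ref{l4.1}); then for $x>L$ and $y\le0$ one has $J_n(x-y)=0$, the offending integral is identically zero, and your subsolution inequality holds with no further estimate. With that modification your argument is correct in all cases, including the accelerated-spreading case where $c_n\to\yy$.
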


\begin{lemma}\label{l4.1}Under the same assumptions with Theorem \ref{t4.4}, if the condition {\bf(J1)} holds then
  \bes
  \liminf_{t\to\yy}\frac{h(t)}{t}\ge c_0.
  \lbl{4.3}\ees
\end{lemma}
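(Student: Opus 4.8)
The plan is to mirror Step 2 of the proof of Theorem \ref{t3.6}, while exploiting the feature that distinguishes \eqref{1.3} from \eqref{1.2}: once spreading occurs, Theorem \ref{t4.2} guarantees $h_\yy=\yy$ and $u\to u^*$ \emph{locally uniformly up to the fixed boundary} $x=0$. This will let me work with a \emph{full-amplitude} semi-wave and avoid the reduction factor $U_n(0)/u^*_n$ that was forced upon us for \eqref{1.2}. Fix $0<\ep<1$. I reuse the compactly supported kernels $J_n$ and the semi-wave triples $(c_n,\phi_n,u^*_n)$ produced in Step 2 of Theorem \ref{t3.6}: each $\phi_n$ is nonincreasing with $\phi_n(0)=0$, $\phi_n(-\yy)=u^*_n$, one has $0<\phi_n\le u^*_n\le u^*$ and $c_n\nearrow c_0$, and $J_n$ is supported in $[-L_n,L_n]$ for some $L_n>0$. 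I then set
\[
\underline h(t)=(1-\ep)c_nt+2L_n,\qquad \underline u(t,x)=(1-\ep)\phi_n\big(x-\underline h(t)\big),
\]
on the moving region $L_n<x<\underline h(t)$, and aim to show it is a lower solution of \eqref{1.3}.

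The verification proceeds through the lower-solution analogue, for \eqref{1.3}, of the comparison principle Theorem \ref{t3.4}, taken with the fixed left boundary $r(t)\equiv L_n$. Writing $z=x-\underline h(t)\le0$ and inserting the semi-wave identity $c_n(-\phi_n'(z))=d\int_{-\yy}^{0}J_n(z-w)\phi_n(w)\,{\rm d}w-d\phi_n(z)+f(\phi_n(z))$, together with $(1-\ep)^2\le(1-\ep)$, the required differential inequality $\underline u_t\le d\int_0^{\underline h(t)}J(x-y)\underline u\,\dy-dj(x)\underline u+f(\underline u)$ splits into three pointwise facts: (i) $d\int_0^{\underline h(t)}J(x-y)\underline u\,\dy\ge(1-\ep)d\int_{-\yy}^0J_n(z-w)\phi_n(w)\,{\rm d}w$, which uses $J\ge J_n$ and the compact support of $J_n$ (for $x>L_n$ and $w<-\underline h(t)$ one has $z-w>L_n$, so the $J_n$-tail vanishes and $\int_{-\underline h(t)}^0=\int_{-\yy}^0$); (ii) $-dj(x)\underline u\ge-d\underline u$, since $j(x)=\int_0^{\yy}J(x-y)\dy\le1$; and (iii) $f(\underline u)=f((1-\ep)\phi_n(z))\ge(1-\ep)f(\phi_n(z))$, which is exactly the monotonicity of $f(u)/u$ from {\bf(F2)}. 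The free boundary inequality is obtained by the same change of variables: the flux equals $(1-\ep)\mu\int_{-\underline h(t)}^0\int_0^{\yy}J(x'-y')\phi_n(x')\,{\rm d}y'\,{\rm d}x'$, which by $J\ge J_n$ and the support argument is $\ge(1-\ep)c_n=\underline h'(t)$, while $\underline u(t,\underline h(t))=(1-\ep)\phi_n(0)=0$.

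Finally I initialize. Since spreading holds, Theorem \ref{t4.2} lets me pick $T_1>0$ with $h(T_1)>2L_n=\underline h(0)$ and $u(t,x)\ge(1-\ep)u^*$ for all $t\ge T_1$, $x\in[0,2L_n]$. As $\underline u\le(1-\ep)u^*_n\le(1-\ep)u^*$, this gives both the left-boundary domination $\underline u(t,x)\le u(t+T_1,x)$ on $[0,L_n]$ and the initial comparison $\underline u(0,x)\le u(T_1,x)$ on $[0,2L_n]$; this step is precisely where convergence to the \emph{full} constant $u^*$ up to $x=0$ is used, and where \eqref{1.2} would instead only supply $U(0)<u^*$. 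The comparison principle then yields $h(t+T_1)\ge\underline h(t)$, hence $\liminf_{t\to\yy}h(t)/t\ge(1-\ep)c_n$; letting $n\to\yy$ and then $\ep\to0$ proves \eqref{4.3}. The main obstacle is arranging estimates (i)--(iii) so that a full-amplitude profile built from the \emph{truncated} kernel $J_n$ is genuinely a subsolution of the \emph{untruncated} problem \eqref{1.3}; the inequality $j(x)\le1$ and the monotonicity of $f(u)/u$ are exactly what make the amplitude loss disappear.
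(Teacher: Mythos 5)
Your proof is correct, and it reaches the conclusion by a noticeably leaner route than the paper's. Both arguments share the same skeleton — truncate $J$ to compactly supported kernels $J_n$, use the semi-wave pair $(c_n,\phi_n)$ for $J_n$ as a travelling lower solution on $\{L_n<x<\underline h(t)\}$ via the one-sided comparison principle with $r(t)\equiv L_n$, and pass to the limit $c_n\nearrow c_0$ — but they diverge in how the truncation error in the zeroth-order coefficient is absorbed. The paper routes everything through an auxiliary free boundary problem $(u_n,h_n)$ whose equation carries the coefficient $-d j_n(x)u_n$ and the perturbed nonlinearity $\tilde f(u)=-\delta u+f(u)$; the parameter $\delta$ is introduced precisely so that $d(j_n(x)-j(x))+\delta\ge0$ makes $u$ a supersolution of the $u_n$-problem (Step 6), at the cost of a second family of semi-waves $(c_0^{\delta},\phi^{\delta})$ and a double limit $n\to\infty$ followed by $\delta\to0$. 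You instead insert the Dirichlet-type semi-wave of \eqref{3.7} (with the original $f$) directly into \eqref{1.3} and absorb the discrepancy through the single pointwise inequality $-dj(x)\underline u\ge -d\underline u$, valid because $j(x)\le1$; together with $J\ge J_n$, the compact support of $J_n$ for $x>L_n$, and $f((1-\varepsilon)v)\ge(1-\varepsilon)f(v)$ from {\bf(F2)}, this makes $(1-\varepsilon)\phi_n(x-\underline h(t))$ a genuine subsolution of \eqref{1.3} with no $\delta$ and no auxiliary problem, so only the single limit $c_n\to c_0$ from Step 2 of Theorem \ref{t3.6} is needed. Your initialization is also the right one: the full-amplitude comparison $\underline u\le(1-\varepsilon)u^*\le u(t+T_1,\cdot)$ on $[0,2L_n]$ is exactly what Theorem \ref{t4.2} supplies for \eqref{1.3} (and what fails for \eqref{1.2}, where only $U(0)<u^*$ is available, forcing the factor $U_n(0)/u^*_n$ in \eqref{3.4}). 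What your approach buys is a shorter proof with one fewer approximation parameter; what the paper's buys is a comparison framework ($u$ versus a full auxiliary free boundary solution $u_n$) that is reused verbatim in its Step 6 and parallels the structure of Theorem \ref{t3.6}.
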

\begin{proof}

This proof is similar to Step 2 in the proof of Theorem \ref{t3.6}, but some obvious and crucial modifications are need. So we give the sketch.

{\bf Step 1}: Define $J_n(x)$ as in the proof of Theorem \ref{t3.6}. Clearly,
\[j_n(x):=\int_{-x}^{\yy}J_n(y)\dy\to j(x)=\int_{-x}^{\yy}J(y)\dy ~ ~ {\rm uniformly~for~}x\in\mathbb{R}{\rm ~as~} n\to\yy.\]
Moreover, there is $0<\delta_0\ll1$ such that $\tilde{f}(u):=-\delta u+f(u)$ satisfies conditions {\bf(F1)}-{\bf(F3)} for all $\delta\in(0,\delta_0)$, and has a unique positive zero $u^*_{\delta}$. For such $\delta$, we can choose $n$ large enough, say $n\ge N_1>0$, such that $d(\|J_n\|_1-1)u+\tilde{f}(u)$ still meets conditions {\bf(F1)}-{\bf(F3)} and
 \bes\label{4.1}
d(j_n(x)-j(x))+\delta\ge0 \;\;{\rm ~ for~all~}\; x\in \mathbb{R}.
 \ees
For any given $n\ge N_1$, let $(u_n,h_n)$ be the unique solution of
 \bess
\left\{\begin{aligned}
&u_{nt}=d\dd\int_{0}^{h_{n}(t)}J_n(x-y)u_n(t,y)\dy-dj_n(x)u_n+\tilde{f}(u_n), && t>0,~0\le x<h_n(t),\\
&u_n(t,h_n(t))=0, && t>0,\\
&h_n'(t)=\mu\dd\int_{0}^{h_n(t)}\!\!\int_{h_n(t)}^{\infty}
J_n(x-y)u_n(t,x)\dy\dx, && t>0,\\
&h_n(0)=h(T),\;\;u_n(0,x)=u(T,x), &&x\in[0,h(T)].
\end{aligned}\right.
 \eess
Analogously, we can conclude from Theorem \ref{t4.3} that spreading happens for $(u_n,h_n)$ by choosing $T$ large enough.

{\bf Step 2}: Similar to the proof of Theorem \ref{t3.6} ((1) of Step 2), the semi-wave problem
\bess\left\{\begin{array}{lll}
 d\dd\int_{-\yy}^{0}\!J_n(x-y)\phi_n(y)\dy-d\phi_n+c\phi_n'+\tilde f(\phi_n)=0,\;\;-\yy<x<0,\\[2mm]
\phi_n(-\yy)=u^*_{n,\delta},\ \ \phi_n(0)=0, \ \ c_n=\mu\dd\int_{-\yy}^{0}\int_{0}^{\yy}\!J_n(x-y)\phi_n(x)\dy\dx
 \end{array}\right.
 \eess
has a unique solution pair $(c_n,\phi_n)$ with $c_n>0$ and $\phi_n$ nonincreasing, where  $u^*_{n,\delta}$ is the unique positive root of $d(\|J_n\|_1-1)u+\tilde{f}(u)=0$. Clearly, $u^*_{n,\delta}\le u^*_{n+1,\delta}\le u^*_{\delta}$ and $\lim_{n\to\yy}u^*_{n,\delta}=u^*_{\delta}$.

{\bf Step 3}: By the same lines in the proof of Theorem \ref{t3.6} ((2) of Step 2), we have
$\lim_{n\to\yy}c_n=c_0^{\delta}$ and $\lim_{n\to\yy}\phi_n=\phi^{\delta}$ locally uniformly in  $(-\yy,0]$,
where $(c_0^{\delta},\phi^{\delta})$ is the unique solution pair of
 \bes\label{4.2}\left\{\begin{array}{lll}
 d\dd\int_{-\yy}^{0}\!J(x-y)\phi(y)\dy-d\phi+c\phi'+\tilde f(\phi)=0,\;\;-\yy<x<0,\\[2mm]
\phi(-\yy)=u^*_{\delta},\ \ \phi(0)=0, \ \ c=\mu\dd\int_{-\yy}^{0}\int_{0}^{\yy}\!J(x-y)\phi(x)\dy\dx.
 \end{array}\right.
 \ees

{\bf Step 4}: We now prove that $c_0^{\delta}\to c_0$ and $\phi^\delta\to \phi^{c_0}$ as $\delta\to 0$, where $(c_0,\phi^{c_0})$ is given by \eqref{3.3}. Let ${\delta_n}$ be a sequence decreasing to $0$, and $(c_0^{\delta_n},\phi^{\delta_n})$ be the unique solution pair of \eqref{4.2} with $\delta$ replaced by $\delta_n$. With the similar arguments in the proof of \cite[Lemma 2.8]{DN21}, we have $c_0^{\delta_n}\le c_0^{\delta_{n+1}}\le c_0$ and $\phi^{\delta_n}\le\phi^{\delta_{n+1}}\le \phi^{c_0}$. Then by arguing as in (2) of Step 2 in the proof of Theorem \ref{t3.6}, we can obtain the desired result.

{\bf Step 5}: We are going to prove $\liminf_{t\to\yy}\frac{h_n(t)}{t}\ge c_n$. For any given $0<\ep\ll1$ and $L$ large enough such that $J_n(x)=0$ for $|x|\ge L$, we define
\[\underline{h}(t)=c_n(1-\ep)t+2L, ~ ~ \underline{u}(t,x)=(1-\ep)\phi_n(x-\underline{h}(t)).\]
Analogously we easily examine that there exists $T_1>0$ such that
\bess
\left\{\begin{aligned}
&\underline u_t\le d\dd\int_{0}^{\underline h(t)}J_n(x-y)\underline u(t,y)\dy-dj_n(x)\underline u+\tilde f(\underline u), && t>0,~L<x<\underline h(t),\\
&\underline u(t,\underline h(t))\le0,&& t>0,\\
&\underline h'(t)\le\mu\dd\int_{0}^{\underline h(t)}\!\int_{\underline h(t)}^{\infty}
J_n(x-y)\underline u(t,x)\dy\dx,&& t>0,\\
&\underline u(t,x)\le u_n(t+T_1,x),&& t>0, ~0\le x\le L,\\
&\underline h(0)\le h_n(T_1),\;\;\underline u(0,x)\le u_n(T_1,x),&& x\in[0,\underline h(0)].
\end{aligned}\right.
 \eess
By Theorem \ref{t3.4}, we can get the desired result.

{\bf Step 6}:\, Now we prove \eqref{4.3}. Making use of $J_n\le J$ and \eqref{4.1} we have
\bess
u_t&=&d\dd\int_{0}^{h(t)}J(x-y)u(t,y)\dy-dj(x)u+f(u)\\
&=&d\dd\int_{0}^{h(t)}J(x-y)u(t,y)\dy-du(t,x)+d(1-j(x))u+f(u)\\
&\ge&d\dd\int_{0}^{h(t)}J_n(x-y)u(t,y)\dy-du+d(1-j_n(x))u+\tilde f(u).
\eess
 On the other hand,
\bess
u_{nt}&=&d\dd\int_{0}^{h_{n}(t)}J_n(x-y)u_n(t,y)\dy-dj_n(x)u_n+\tilde{f}(u_n)\\
&=&d\dd\int_{0}^{h_{n}(t)}J_n(x-y)u_n(t,y)\dy-du_n+d(1-j_n(x))u_n+\tilde{f}(u_n).
 \eess
By the comparison principle (Theorem \ref{t3.3}),
 \[u_n(t,x)\le u(t+T,x), ~ ~ h_n(t)\le h(t+T) ~ ~ {\rm for } ~ t\ge0, ~ x\in[0,h_n(t)].\]
Combining this with the results in Steps 3, 4 and 5 allows us to derive \eqref{4.3}.
  \end{proof}

\begin{lemma}\label{l4.2} Under assumptions of Theorem \ref{t4.4}, if the condition {\bf(J1)} holds then we have
\[\limsup_{t\to\yy}\frac{h(t)}{t}\le c_0.\]
\end{lemma}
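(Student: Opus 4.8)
The plan is to construct an explicit upper solution traveling with speed slightly exceeding $c_0$ and invoke the comparison principle (Theorem~\ref{t3.3}), which by the remark following Theorem~\ref{t3.4} remains valid for problem \eqref{1.3} despite the extra factor $j(x)$ in the diffusion term. This is the dual of Step~1 in the proof of Theorem~\ref{t3.6}; the essential task is to verify that the semi-wave $\phi^{c_0}$ of \eqref{3.3}, whose defining equation involves $-d\phi$ rather than $-dj(x)\phi$, still generates a supersolution for the operator carrying $-dj(x)u$. The key algebraic observation will be that since $j(x)=\int_0^\infty J(x-y)\dy\le 1$, we have $-dj(x)u\ge -du$ for nonnegative $u$, so replacing $-du$ by $-dj(x)u$ only makes the left-hand deficit larger, preserving the supersolution inequality for free.

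First I would fix any small $\ep>0$ and set, for a constant $L>0$ to be chosen,
\[
\bar h(t)=(1+\ep)c_0 t+L,\qquad \bar u(t,x)=(1+\ep)\phi^{c_0}\big(x-\bar h(t)\big),
\]
exactly as in Step~1 of Theorem~\ref{t3.6}. By a standard comparison argument $\limsup_{t\to\yy}u(t,x)\le u^*$ uniformly in $\overline{\mathbb R}^+$, so there is $T>0$ with $u(t,x)\le(1+\ep/2)u^*$ for $t\ge T$; choosing $L$ large and using $\phi^{c_0}(-\yy)=u^*$ then secures the initial inequalities $\bar h(0)\ge h(T)$ and $\bar u(0,x)\ge u(T,x)$ on $[0,h(T)]$. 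The boundary condition $\bar u(t,\bar h(t))=(1+\ep)\phi^{c_0}(0)=0$ and the free-boundary inequality $\bar h'(t)\ge \mu\int_0^{\bar h(t)}\!\int_{\bar h(t)}^\infty J(x-y)\bar u(t,x)\dy\dx$ follow verbatim from the computation in Theorem~\ref{t3.6}, since that estimate used only the definition of $c_0$ via \eqref{3.3} and is insensitive to the $j(x)$ modification.

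The one step requiring genuine care is the interior supersolution inequality for the operator of \eqref{1.3}. Writing $z=x-\bar h(t)$ and using $-(1+\ep)^2 c_0{\phi^{c_0}}'(z)\ge -(1+\ep)c_0{\phi^{c_0}}'(z)$ (valid because $\phi^{c_0}$ is nonincreasing, so ${\phi^{c_0}}'\le 0$), the semi-wave equation \eqref{3.3} gives
\[
\bar u_t\ge d\dd\int_{0}^{\bar h(t)}\!\!J(x-y)\bar u(t,y)\dy-d\bar u+f(\bar u).
\]
Since $j(x)\le 1$ and $\bar u\ge 0$, we have $-d\bar u\ge -dj(x)\bar u$, so the right-hand side dominates $d\int_0^{\bar h(t)}J(x-y)\bar u\dy-dj(x)\bar u+f(\bar u)$, which is precisely the interior operator of \eqref{1.3}. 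Thus $\bar u$ is a supersolution for \eqref{1.3}.

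Having verified all four conditions, I would apply the comparison principle to conclude $h(t+T)\le\bar h(t)$ for all $t\ge 0$, whence
\[
\limsup_{t\to\yy}\frac{h(t)}{t}\le\lim_{t\to\yy}\frac{\bar h(t-T)}{t}=(1+\ep)c_0.
\]
Letting $\ep\to 0$ yields the claim. The main obstacle I anticipate is not any single inequality but confirming that the comparison framework of Theorems~\ref{t3.3}--\ref{t3.4} transfers cleanly to \eqref{1.3}; the excerpt asserts this in the paragraph after Theorem~\ref{t3.4}, so I would lean on that remark and on the sign relation $j(x)\le 1$ as the only new ingredient beyond the Theorem~\ref{t3.6} argument.
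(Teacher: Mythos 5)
Your overall strategy (the travelling supersolution $\bar h(t)=(1+\ep)c_0t+L$, $\bar u=(1+\ep)\phi^{c_0}(x-\bar h(t))$, followed by the comparison principle and $\ep\to0$) is exactly the paper's, but the one step you yourself single out as "requiring genuine care" contains a sign error that breaks the argument. Since $j(x)=\int_0^\yy J(x-y)\dy\le 1$ and $\bar u\ge0$, we have $dj(x)\bar u\le d\bar u$, hence $-dj(x)\bar u\ge -d\bar u$ --- the opposite of what you assert. Consequently the operator of \eqref{1.3} \emph{dominates} the operator with $-d\bar u$, and the inequality
\[
\bar u_t\ \ge\ d\int_{0}^{\bar h(t)}J(x-y)\bar u(t,y)\dy-d\bar u+f(\bar u)
\]
does \emph{not} imply the supersolution inequality with $-dj(x)\bar u$ in place of $-d\bar u$; the replacement makes the right-hand side larger, not smaller. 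So the $j(x)$ modification is not obtained "for free," and as written your interior supersolution verification fails.

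The gap is repairable, and the paper's proof shows how: do not discard the portion of the semi-wave integral over $(-\yy,0)$. Writing $-d\bar u=-dj(x)\bar u-d\int_{-\yy}^{0}J(x-y)\dy\,\bar u(t,x)$ and keeping the term $d\int_{-\yy}^{0}J(x-y)\bar u(t,y)\dy$ that arises when you split $\int_{-\yy}^{\bar h(t)}=\int_{-\yy}^{0}+\int_{0}^{\bar h(t)}$ in the semi-wave equation, the two leftover pieces combine into
\[
d\int_{-\yy}^{0}J(x-y)\bigl[\bar u(t,y)-\bar u(t,x)\bigr]\dy\ \ge\ 0,
\]
which is nonnegative precisely because $\phi^{c_0}$ is nonincreasing, so $\bar u(t,y)\ge\bar u(t,x)$ whenever $y\le 0\le x$. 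This monotonicity of the semi-wave is the genuinely new ingredient needed for \eqref{1.3}; the relation $j(x)\le1$ alone points in the wrong direction. The remaining parts of your proposal (initial data, boundary condition, free-boundary inequality, and the final limit) are fine and match the paper.
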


\begin{proof} This lemma can be proved by similar arguments in Step 1 of Theorem \ref{t3.6}, and thus we only give the difference.
For any small $\ep>0$ and $L>0$ to be determined later, define
\[\bar{h}(t)=(1+\ep)c_0t+L, ~ ~ \bar{u}(t,x)=(1+\ep)\phi^{c_0}(x-\bar{h}(t)).\]
We are going to check that there exists $T>0$ such that
\bes\label{4.4}
\left\{\begin{aligned}
&\bar u_t\ge d\dd\int_{0}^{\bar h(t)}J(x-y)\bar u(t,y)\dy-dj(x)\bar u+ f(\bar u), && t>0,~0\le x<\bar h(t),\\
&\bar u(t,\bar h(t))\ge0,&& t>0,\\
&\bar h'(t)\ge\mu\dd\int_{0}^{\bar h(t)}\int_{\bar h(t)}^{\infty}
J(x-y)\bar u(t,x)\dy\dx,&& t>0,\\
&\bar u(0,x)\ge u(T,x),~\bar h(0)\ge h(T),&& x\in[0,h(T)].
\end{aligned}\right.
 \ees
We only show the first inequality of \eqref{4.4}. Since $\phi$ is nonincreasing in $(-\yy,0]$, one has
 \bess
\bar u_t&=&-(1+\ep)^2c_0{\phi^{c_0}}'(x-\bar{h}(t))\ge -(1+\ep)c_0{\phi^{c_0}}'(x-\bar{h}(t))\\
&=&(1+\ep)\left(d\dd\int_{-\yy}^{\bar h(t)}J(x-y)\phi^{c_0}(y-\bar{h}(t))\dy-d\phi^{c_0}(x-\bar{h}(t))+f(\phi^{c_0}(x-\bar{h}(t)))\right)\\
&\ge&d\dd\int_{0}^{\bar h(t)}J(x-y)\bar u(t,y)\dy-dj(x)\bar u+f(\bar u)+d\dd\int_{-\yy}^{0}J(x-y)\bar u(t,y)\dy\\
&&\qquad+d\dd\int_{0}^{\yy}J(x-y)\dy\bar u(t,x)-d\bar{u}(t,x)\\
&=&d\dd\int_{0}^{\bar h(t)}J(x-y)\bar u(t,y)\dy-dj(x)\bar u+f(\bar u)+d\dd\int_{-\yy}^{0}J(x-y)\left[\bar u(t,y)-\bar{u}(t,x)\right]\dy\\
&\ge&d\dd\int_{0}^{\bar h(t)}J(x-y)\bar u(t,y)\dy-dj(x)\bar u+f(\bar u).
\eess
By virtue of Theorem \ref{t3.3}, $h(t+T)\le \bar h(t)$ for $t\ge 0$. Thus
 \[\limsup_{t\to\yy}\frac{h(t)}{t}\le\lim_{t\to\yy}\frac{\bar{h}(t-T)}{t}=(1+\ep)c_0.\]
Letting $\ep\to0$, we complete the proof.
\end{proof}

\begin{lemma}\label{l4.3} Under assumptions of Theorem \ref{t4.4}, if the condition {\bf(J1)} is not true then
\[\lim_{t\to\yy}\frac{h(t)}{t}=\yy.\]
\end{lemma}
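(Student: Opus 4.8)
The plan is to follow the multi-step comparison scheme of Lemma \ref{l4.1}, retaining its sub-solution construction verbatim but exploiting the failure of {\bf(J1)} at the final stage rather than passing to a finite limiting speed. First I would fix any $\delta\in(0,\delta_0)$ and reuse Steps 1, 2 and 5 from the proof of Lemma \ref{l4.1}: introduce the compactly supported kernels $J_n(x)=\xi(x/n)J(x)$, the truncated reaction $\tilde f(u)=-\delta u+f(u)$, and the approximate free boundary problem $(u_n,h_n)$ governed by $J_n$ and $j_n(x)=\int_{-x}^{\yy}J_n(y)\dy$. Since each $J_n$ is compactly supported it automatically satisfies {\bf(J1)}, so the associated semi-wave problem possesses a unique pair $(c_n,\phi_n)$ with $c_n>0$ and $\phi_n$ nonincreasing for every $n\ge N_1$. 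The sub-solution $\underline h(t)=c_n(1-\ep)t+2L$, $\underline u(t,x)=(1-\ep)\phi_n(x-\underline h(t))$ of Step 5, combined with the comparison of Step 6 (which relies on $J_n\le J$ and the compatibility inequality \eqref{4.1}), gives $h_n(t)\le h(t+T)$, whence
\[
\liminf_{t\to\yy}\frac{h(t)}{t}\ge c_n\qquad\text{for every } n\ge N_1 .
\]

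The decisive new ingredient is to show that these approximate speeds diverge, i.e. $\lim_{n\to\yy}c_n=\yy$ once {\bf(J1)} is violated. Because $J_n\nearrow J$ and $\int_0^{\yy}xJ(x)\dx=\yy$, the truncated first moments $\int_0^{\yy}xJ_n(x)\dx$ blow up as $n\to\yy$, and this heavy-tail effect forces the semi-wave speed to be unbounded. I would establish $c_n\to\yy$ by adapting the reasoning behind conclusion (2)(ii) of \cite[Proposition 5.1]{DN213}, precisely as was done for problem \eqref{1.2} in Step 3 of the proof of Theorem \ref{t3.6}. The fixed shift $-\delta u$ in $\tilde f$ is harmless for this purpose: it lowers each $c_n$ by at most a bounded amount and does not interfere with the divergence, which is driven solely by the tail of the kernel, so there is no need to let $\delta\to0$ here (in contrast to Step 4 of Lemma \ref{l4.1}).

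Letting $n\to\yy$ in the displayed inequality and using $c_n\to\yy$ yields $\liminf_{t\to\yy}h(t)/t=\yy$, that is $\lim_{t\to\yy}h(t)/t=\yy$, which is the assertion. I expect the divergence $c_n\to\yy$ to be the only genuine obstacle: one must verify that no finite speed remains compatible with the nonlocal semi-wave equation as the kernel loses its first moment, and carrying the cited speed-comparison argument through for the truncated nonlinearity $\tilde f$ is the sole nontrivial point; everything else is a faithful transcription of Lemma \ref{l4.1}.
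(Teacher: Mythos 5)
Your proposal is correct and follows essentially the same route as the paper: the paper proves Lemma \ref{l4.3} by reusing the approximation scheme of Lemma \ref{l4.1} (compactly supported kernels $J_n$, the shifted nonlinearity $\tilde f$, the sub-solution built from the semi-wave $(c_n,\phi_n)$, and the comparison $h_n(t)\le h(t+T)$) to get $\liminf_{t\to\infty}h(t)/t\ge c_n$ for every $n$, and then invokes the argument of \cite[Proposition 5.1\,(2)(ii)]{DN213}, exactly as in Step 3 of Theorem \ref{t3.6}, to conclude $c_n\to\infty$ when {\bf(J1)} fails. Your observation that the fixed shift $-\delta u$ need not be removed (no $\delta\to0$ limit is required) is also consistent with the paper's intended argument.
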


\begin{proof} The proof is similar to Step 3 in the proof of Theorem \ref{t3.6}, and we omit the details here.\end{proof}

Theorem \ref{t4.4} can be directly deduced from Lemmas \ref{l4.1}, \ref{l4.2} and \ref{l4.3}.

\section{The dynamics of problem \eqref{1.4}}
\setcounter{equation}{0} {\setlength\arraycolsep{2pt}

Now we are in a position to study the dynamics of problem \eqref{1.4}. Firstly, by following the analogous lines of \cite[Theorem 2.1]{DWZ} we can obtain the following well-posedness result.

\begin{theorem}[Existence and uniqueness]\label{t5.1}  Problem \eqref{1.4} has a unique global solution $(u_1,u_2,h)$. Moreover, $(u_1,u_2)\in [C(\overline D^T)]^2$, $h\in C^1([0,T])$, $0< u_1\le A_1:=\max\{\|u_{10}\|_{\yy},a_1/b_1\}$ and $0<u_2\le \max\{\|u_{20}\|_{\yy},(a_2+c_2A_1)/b_2\}$ in $D^T_{h}$ for any $T>0$.
\end{theorem}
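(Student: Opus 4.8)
The plan is to follow the three–step strategy used for the scalar problems and for the two–species model in \cite{DWZ}: first construct a local–in–time solution by a fixed point argument, then derive a priori bounds via the maximum/comparison principles that are \emph{independent of} $T$, and finally use those bounds to continue the solution globally.

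First I would establish local existence and uniqueness. For a fixed $h\in\mathbb H^T_{h_0}$ with $T$ small, the pair $(u_1,u_2)$ solves a coupled system in which each equation, read pointwise in $x$, is a nonlocal ODE in $t$: $u_{it}=d_i\int_0^{h(t)}J_i(x-y)u_i(t,y)\dy-d_iu_i+u_ig_i(u_1,u_2)$, where $g_1=a_1-b_1u_1-c_1u_2$ and $g_2=a_2-b_2u_2+c_2u_1$. Writing this in integral form and using that the $g_i$ are locally Lipschitz (polynomial), a contraction mapping argument in $[C(\overline D^T_h)]^2$ yields a unique $(u_1,u_2)$ for each admissible $h$. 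I would then define the operator $\tilde h(t)=h_0+\int_0^t\sum_{i=1}^2\mu_i\int_0^{h(s)}\int_{h(s)}^{\yy}J_i(x-y)u_i(s,x)\dy\dx\,{\rm d}s$ and show that $h\mapsto\tilde h$ is a contraction on a closed ball of $\mathbb H^T_{h_0}$ for $T$ small, exactly as in \cite[Theorem 2.1]{DWZ} and \cite[Theorem 2.1]{CDLL}. Its fixed point is a local solution, and since the boundary double integral is continuous in $t$, we obtain $h\in C^1$.

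Next I would derive the a priori bounds, which is where the predator–prey structure is exploited and which renders the bounds independent of $T$. Positivity $u_i>0$ in $D^T_h$ follows from Lemma \ref{l2.1} applied to each equation $u_{it}=d_i\int_0^{h}J_i u_i\dy-d_iu_i+g_iu_i$, since $g_i\in L^\yy$ on the (finite–time) existence interval where $u_1,u_2$ are bounded, and $u_{i0}\ge,\not\equiv0$. For the upper bound on $u_1$, drop the nonnegative term $-c_1u_1u_2$ so that $u_1$ is a subsolution of the scalar logistic inequality $u_{1t}\le d_1\int_0^{h}J_1u_1\dy-d_1u_1+u_1(a_1-b_1u_1)$; the constant $A_1=\max\{\|u_{10}\|_\yy,a_1/b_1\}$ is a supersolution because $d_1\int_0^{h}J_1A_1\dy-d_1A_1\le0$ (as $\int_{\R}J_1=1$) and $A_1(a_1-b_1A_1)\le0$, and it dominates $u_1$ on the parabolic boundary. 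Hence $u_1\le A_1$ by comparison (Lemma \ref{l2.1} after linearizing the bounded reaction, as in Theorem \ref{t3.3}). Feeding $u_1\le A_1$ into the predator equation gives $u_{2t}\le d_2\int_0^{h}J_2u_2\dy-d_2u_2+u_2(a_2+c_2A_1-b_2u_2)$, and the same argument yields $u_2\le\max\{\|u_{20}\|_\yy,(a_2+c_2A_1)/b_2\}$.

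Finally, the global extension is routine given the above. The bounds $0<u_i\le A_i$ do not depend on $T$, so $u_i$ cannot blow up; moreover $h'(t)=\sum_i\mu_i\int_0^{h}\int_{h}^{\yy}J_i(x-y)u_i\dy\dx\le\sum_i\mu_iA_ih(t)$ because $\int_{h}^{\yy}J_i(x-y)\dy\le1$, so $h$ grows at most exponentially and remains finite on any bounded interval. A standard continuation argument then extends the unique local solution to all $t>0$, giving the global solution with the asserted regularity and bounds. I expect the main obstacle to be the contraction step for the free boundary: because nonlocal diffusion provides no smoothing, one must control the dependence of $(u_1,u_2)$ on $h$ purely through the Lipschitz/integral structure and verify the contraction \emph{simultaneously} for the coupled pair—the interspecific terms $c_1u_1u_2$ and $c_2u_1u_2$ being handled via the uniform bounds so that the effective reaction coefficients stay in $L^\yy$.
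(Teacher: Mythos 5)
Your proposal is correct and follows essentially the same route the paper intends: the paper proves Theorem \ref{t5.1} only by reference to the method of \cite[Theorem 2.1]{DWZ} (ODE theory plus a contraction mapping for the free boundary plus the maximum principle), which is exactly your three-step scheme, and your derivation of the bounds $u_1\le A_1$ and $u_2\le\max\{\|u_{20}\|_{\yy},(a_2+c_2A_1)/b_2\}$ by dropping $-c_1u_1u_2$ and then feeding $u_1\le A_1$ into the predator equation is the standard way these constants arise. No gaps.
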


\begin{lemma}\label{l5.1}Suppose that $J_i$ satisfy the condition {\bf(J)} and $J_i>0$ in $\mathbb{R}$. If $h_\yy<\yy$, then
\[\lim_{t\to\yy}\|u_i(t,\cdot)\|_{C([0,h(t)])}=0, ~ ~ ~   \lambda_p(\mathcal{L}^{d_i}_{(0,\,h_\yy)}+a_i)\le0.\]
\end{lemma}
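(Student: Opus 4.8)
The plan is to follow the template of Lemma \ref{l3.1}: reduce the two assertions to the two eigenvalue inequalities $\lambda_p(\mathcal L^{d_i}_{(0,h_\yy)}+a_i)\le0$, from which the uniform decay is read off by scalar comparison. The favorable and unfavorable signs of the interaction terms force the two components to be handled differently, and the genuine difficulty lies in the prey equation, where the predation term $-c_1u_1u_2$ destroys the one-sided comparison that drove the scalar argument.

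First I would dispose of the predator. Since $u_1\ge0$, the second equation gives $u_{2t}\ge d_2\int_0^{h(t)}J_2(x-y)u_2\,\dy-d_2u_2+u_2(a_2-b_2u_2)$, so $u_2$ is a supersolution of the scalar logistic problem with rate $a_2$. Assuming $\lambda_p(\mathcal L^{d_2}_{(0,h_\yy)}+a_2)>0$, continuity and monotonicity of $\lambda_p$ in the length produce $\ep>0$ with $\lambda_p(\mathcal L^{d_2}_{(0,h_\yy-\ep)}+a_2)>0$; comparing $u_2$ on $[0,h_\yy-\ep]$ (past the time when $h(t)>h_\yy-\ep$) with the fixed-boundary logistic solution there, which tends to a positive steady state by the Lemma \ref{l2.6}-type result, yields a uniform positive lower bound for $u_2$ on a subinterval. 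Inserting this into $h'(t)=\sum_i\mu_i\int\!\!\int J_iu_i$ bounds $h'$ below by a positive constant, contradicting $h_\yy<\yy$. Hence $\lambda_p(\mathcal L^{d_2}_{(0,h_\yy)}+a_2)\le0$; as $\lambda_p(\mathcal L^{d}_{(0,l)}+a_0)>a_0-d$ for every $l$, this also forces $a_2<d_2$.

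The heart of the proof is the remaining inequality $\lambda_p(\mathcal L^{d_1}_{(0,h_\yy)}+a_1)\le0$ together with the decay of both components, and here the hypothesis $J_i>0$ on all of $\R$ is essential. The first step is $h'(t)\to0$: since $u_{it}$ and $h'$ are bounded (Theorem \ref{t5.1}), differentiating the boundary integral (the endpoint terms vanish because $u_i(t,h(t))=0$) shows $h'$ is Lipschitz, hence uniformly continuous, and $\int_0^\yy h'=h_\yy-h_0<\yy$ then gives $h'(t)\to0$. Because $J_i>0$, for $x\in[0,h(t)]$ one has $\int_{h(t)}^\yy J_i(x-y)\,\dy=\int_{h(t)-x}^\yy J_i\ge\int_{h_\yy}^\yy J_i>0$, so $h'(t)\to0$ forces $\|u_i(t,\cdot)\|_{L^1(0,h(t))}\to0$ for $i=1,2$. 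In particular the predator cannot persist, and the nonlocal term $d_i\int_0^{h(t)}J_i(x-y)u_i\,\dy\le d_i\|J_i\|_\yy\|u_i\|_{L^1}$ tends to $0$ uniformly in $x$.

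It remains to convert this $L^1$-smallness into the prey eigenvalue inequality and uniform decay; this is the step I expect to be the \textbf{main obstacle}, precisely because prey and predator decay are coupled through $\pm c_iu_1u_2$ and neither can be established first in isolation. I would argue by contradiction: if $\lambda_p(\mathcal L^{d_1}_{(0,h_\yy)}+a_1)>0$, choose $\delta>0$ with $\lambda_1:=\lambda_p(\mathcal L^{d_1}_{(0,h_\yy-\delta)}+a_1)>0$ and let $\phi_1>0$ be the associated eigenfunction. Testing the prey inequality against $\phi_1$ and using the evenness of $J_1$ (self-adjointness of $\mathcal L^{d_1}_{(0,h_\yy-\delta)}$) gives, for $E(t):=\int_0^{h_\yy-\delta}u_1\phi_1$, a differential inequality $E'(t)\ge\lambda_1E(t)-\beta(t)$ where the predation and self-limitation contributions in $\beta(t)$ are bounded by $\|u_2\|_{L^1}$ and by $\|u_1\|_\yy\,E(t)$; combined with $E(t)\to0$ this must be made contradictory once the quadratic self-limitation term is absorbed using the smallness of $u_1$. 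The natural device for closing this cleanly, and for the simultaneous uniform decay, is the order-preserving (cooperative) structure of the predator--prey system in the twisted order $(u_1,-u_2)$: one builds a stationary sub-/supersolution pair $(\underline u_1,\bar u_2)$ on $[0,h_\yy-\delta]$ with $\underline u_1>0$, forcing the prey to persist and the free boundary to advance, contradicting $h_\yy<\yy$. Once $\lambda_p(\mathcal L^{d_1}_{(0,h_\yy)}+a_1)\le0$ is in hand, the decay is routine: $u_1$ is a subsolution of the scalar logistic problem with rate $a_1$, so Lemma \ref{l2.6} yields $u_1\to0$ uniformly; then $a_2-b_2u_2+c_2u_1\le a_2+o(1)-b_2u_2$, and since $\lambda_p(\mathcal L^{d_2}_{(0,h_\yy)}+a_2)\le0$ the same comparison (letting the $o(1)$ perturbation tend to $0$) gives $u_2\to0$, which completes both conclusions.
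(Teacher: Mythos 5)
Your first two stages are sound and do follow the template the paper points to (the paper itself gives no proof of Lemma \ref{l5.1}; it only cites \cite[Theorem 3.3]{DWZ}): the predator inequality $\lambda_p(\mathcal L^{d_2}_{(0,h_\yy)}+a_2)\le0$ follows exactly as in Lemma \ref{l3.1} because dropping the favorable term $c_2u_1u_2\ge0$ makes $u_2$ a supersolution of the scalar logistic problem, and your derivation of $h'(t)\to0$ (boundedness of $h''$, Barbalat) together with $\int_{h(t)-x}^{\yy}J_i\ge\int_{h_\yy}^{\yy}J_i>0$ correctly isolates where the hypothesis $J_i>0$ is used to get $\|u_i(t,\cdot)\|_{L^1}\to0$. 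The closing paragraph is also fine \emph{once} $\lambda_p(\mathcal L^{d_1}_{(0,h_\yy)}+a_1)\le0$ is known.

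The gap is exactly where you flagged it, and neither of the two devices you propose closes it. (i) The eigenfunction test gives, with $F(t)=\int_0^l u_1\phi$ and $l=h_\yy-\delta$, the inequality $F'\ge\lambda_1F-b_1\|u_1(t)\|_\yy F-c_1\|u_2(t)\|_\yy F$ (or, bounding the quadratic terms by $L^1$ norms, $F'\ge\lambda_1F-o(1)$). The first version needs $\limsup_t\bigl(b_1\|u_1\|_\yy+c_1\|u_2\|_\yy\bigr)<\lambda_1$, i.e.\ precisely the pointwise smallness you do not yet have ($L^1$-smallness does not upgrade to $L^\yy$-smallness here, because $u_{1t}\le\ep(t)+(a_1-d_1)u_1$ only forces decay when $a_1<d_1$, which is itself a consequence of the inequality you are trying to prove); the second version is consistent with $F(t)\to0$ and yields no contradiction. (ii) The appeal to an ``order-preserving (cooperative) structure of the predator--prey system in the twisted order $(u_1,-u_2)$'' is based on a false premise: setting $v=-u_2$ makes $\partial_v f_1=c_1u_1\ge0$ but $\partial_{u_1}(-f_2(u_1,-v))=-c_2u_2\le0$, so the system is mixed quasimonotone, not cooperative in any ordering (it is \emph{competition} systems that become cooperative under this flip). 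Consequently the sub-/supersolution pair $(\underline u_1,\bar u_2)$ with $\underline u_1>0$ cannot be produced by monotone iteration without first making $\bar u_2$ pointwise small --- the same circularity. A workable repair is a case split: if $a_1<d_1$, the pointwise bound $u_{1t}\le d_1\|J_1\|_\yy\|u_1(t)\|_{L^1}+(a_1-d_1)u_1$ gives $\|u_1\|_\yy\to0$, then $a_2<d_2$ (already proved) and $c_2u_1=o(1)$ give $\|u_2\|_\yy\to0$, and only then does the persistence argument for $u_1$ run and yield $\lambda_p(\mathcal L^{d_1}_{(0,h_\yy)}+a_1)\le0$; the complementary case $a_1\ge d_1$ must be excluded by a separate argument showing that $\|u_2(t)\|_{L^1}\to0$ cannot suppress the prey on a set of full measure. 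As written, your proposal does not contain either of these steps.
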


\begin{proof} This lemma can be established by the approach in the proof of \cite[Theorem 3.3]{DWZ} with some obvious modifications, and we omit the details here.
\end{proof}

\begin{lemma}\label{l5.2}Assume that $a_1b_1b_2>a_2b_1c_1+a_1c_1c_2$. If $h_\yy=\yy$, then there exist four positive functions $\bar{u}_i$ and $\underline{u}_i$, $i=1,2$, such that
 \bess
 \underline{u}_1(x)\le \liminf_{t\to\yy}u_1(t,x)\le \limsup_{t\to\yy}u_1(t,x)\le\bar{u}_1(x) ~ ~ {\rm ~ locally ~ uniformly ~ in ~ } \overline{\mathbb{R}}^+,\\
\underline{u}_2(x)\le \liminf_{t\to\yy}u_2(t,x)\le \limsup_{t\to\yy}u_2(t,x)\le\bar{u}_2(x) ~ ~ {\rm ~ locally ~ uniformly ~ in ~ } \overline{\mathbb{R}}^+.\eess
\end{lemma}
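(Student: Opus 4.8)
The plan is a single forward pass of an upper/lower solution scheme that respects the predator--prey sign structure, so that no circular dependence arises: the bound for each species is built only from bounds already produced. Throughout, given a function $k$ satisfying {\bf(K)}, write $U^{(i)}_k$ for the unique bounded positive solution provided by Lemma \ref{l2.7} of the steady state equation attached to $(d_i,J_i,b_i)$, i.e. $d_i\int_0^\yy J_i(x-y)U\dy-d_iU+U(k-b_iU)=0$ in $\overline{\mathbb{R}}^+$. The a priori bounds of Theorem \ref{t5.1} make every $\limsup$ below finite, so the construction is well posed.

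\textbf{Upper bounds.} The $u_1$-equation has the form required by Lemma \ref{l2.9} with coefficient $v(t,x)=a_1-c_1u_2(t,x)$ and $\lambda=b_1$; since $u_2\ge0$ we have $\limsup_{t\to\yy}v\le a_1=:k$, which trivially satisfies {\bf(K)} (replacing $v$ by $v^+$ and comparing disposes of the sign requirement). Hence $\limsup_{t\to\yy}u_1(t,x)\le\bar u_1(x):=U^{(1)}_{a_1}(x)$ locally uniformly, with $\bar u_1$ positive, nondecreasing (Remark \ref{r2.1}) and $\bar u_1(\yy)=a_1/b_1$. Feeding this into the $u_2$-equation gives $\limsup_{t\to\yy}(a_2+c_2u_1)\le a_2+c_2\bar u_1(x)=:k$, and $k\in C$, $\inf k\ge a_2>0$, $k(\yy)=a_2+c_2a_1/b_1<\yy$, so {\bf(K)} holds; Lemma \ref{l2.9} yields $\limsup_{t\to\yy}u_2(t,x)\le\bar u_2(x):=U^{(2)}_{k}(x)$ locally uniformly, with $\bar u_2$ nondecreasing and $\bar u_2(\yy)=(a_2+c_2a_1/b_1)/b_2$.

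\textbf{Lower bounds (the crux).} Using the upper bound on $u_2$ in the $u_1$-equation, $\liminf_{t\to\yy}(a_1-c_1u_2)\ge a_1-c_1\bar u_2(x)=:k_1(x)$. Because $\bar u_2$ is nondecreasing, $k_1$ is nonincreasing, so $\inf k_1=k_1(\yy)=a_1-c_1(a_2+c_2a_1/b_1)/b_2=(a_1b_1b_2-a_2b_1c_1-a_1c_1c_2)/(b_1b_2)$, which is positive exactly under the hypothesis $a_1b_1b_2>a_2b_1c_1+a_1c_1c_2$; thus $k_1$ satisfies {\bf(K)}. I would then establish the lower-solution analogue of Lemma \ref{l2.9}: since $h(t)\to\yy$, for any $l>0$ and $0<\ep\ll1$ one has $h(t)>l$ and $a_1-c_1u_2(t,x)\ge k_1(x)-\ep$ on $[0,l]$ for $t$ large, so $u_1$ is a supersolution on $[0,l]$ of the fixed-domain logistic problem with coefficient $k_1-\ep$ and nonlocal term over $[0,l]$. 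The convergence of that problem to its positive steady state (as in Lemmas \ref{l2.6} and \ref{l3.2}), followed by $l\to\yy$ (yielding $U^{(1)}_{k_1-\ep}$) and $\ep\to0$ (Remark \ref{r2.2}), gives $\liminf_{t\to\yy}u_1(t,x)\ge\underline u_1(x):=U^{(1)}_{k_1}(x)>0$ locally uniformly. Finally, inserting this into the $u_2$-equation, $\liminf_{t\to\yy}(a_2+c_2u_1)\ge a_2+c_2\underline u_1(x)=:k_2(x)$ with $\inf k_2\ge a_2>0$, and the same lower-solution argument produces $\liminf_{t\to\yy}u_2(t,x)\ge\underline u_2(x):=U^{(2)}_{k_2}(x)>0$ locally uniformly.

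The four positive functions then satisfy $\underline u_i\le\bar u_i$ and give the asserted squeezing. The main obstacle is the lower-bound step: the hypothesis $a_1b_1b_2>a_2b_1c_1+a_1c_1c_2$ enters precisely to keep $\inf k_1>0$ (so that the prey, and hence the predator, persists with a strictly positive floor), and one must supply a reversed comparison argument, since Lemma \ref{l2.9} furnishes only upper bounds. The monotonicity of $\bar u_1,\bar u_2$ (Remark \ref{r2.1}) is what lets the relevant infima be read off at $x=\yy$, and hence lets condition {\bf(K)} be verified at each stage.
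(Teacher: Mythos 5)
Your proposal is correct and follows essentially the same route as the paper: first the two upper bounds in the order $u_1$ then $u_2$ (the paper gets $\bar u_1$ by direct comparison with the half-line logistic Cauchy problem via Lemmas \ref{l2.4}--\ref{l2.5} rather than through Lemma \ref{l2.9}, and reads off $\sup\bar u_2=(a_2b_1+a_1c_2)/(b_1b_2)$ from Lemmas \ref{l2.4} and \ref{l2.7} rather than from monotonicity, but these are cosmetic differences), and then the lower bounds by exactly your scheme: truncate to $[0,l]$, perturb the coefficient $a_1-c_1\bar u_2$ by a small $\sigma$, use the fixed-domain convergence, let $l\to\yy$ and then $\sigma\to0$ via Remark \ref{r2.2}, with the hypothesis $a_1b_1b_2>a_2b_1c_1+a_1c_1c_2$ entering precisely to keep $\inf\bigl(a_1-c_1\bar u_2\bigr)>0$. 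The only detail worth adding is the explicit check that $\lambda_p(\mathcal{L}^{d_1}_{(0,\,l)}+\beta_0)>0$ for large $l$, which the paper records before invoking the fixed-domain steady-state theory.
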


\begin{proof} {\bf Step 1}:\, Let $w$ be the unique solution of
 \bes\left\{\begin{aligned}
& w_t= d_1\int_{0}^{\yy}J_1(x-y) w(t,y)\dy-d_1 w+w(a_1-b_1w),\;\;\; t>0,\ x\ge0,\\
& w(0,x)=u_{10}(x),  ~ ~  0\le x\le h_0; ~ ~ ~ w(0,x)=0,  ~ ~ ~  x\ge h_0.
 \end{aligned}\right.\lbl{5.x}\ees
The comparison principle gives $u_1\le w$ in $D^\yy_h$. On the other hand, in view of Lemma \ref{l2.4}, problem \eqref{5.x} has a unique bounded positive steady state $\bar{u}_1$ with $0<\bar{u}_1<a_1/b_1$ such that $\lim_{t\to\yy}w(t,x)=\bar{u}_1(x)$ locally uniformly in $\overline{\mathbb{R}}^+$. Hence $\limsup_{t\to\yy}u_1(t,x)\le\bar{u}_1(x)$ locally uniformly in $\overline{\mathbb{R}}^+$.

{\bf Step 2}:\, Obviously, $a_2+c_2\bar{u}_1$ satisfies the condition {\bf(K)} and $\limsup_{t\to\yy}[a_2+c_2u_1(t,x)]\le a_2+c_2\bar{u}_1(x)$ locally uniformly in $\overline{\mathbb{R}}^+$. By Lemma \ref{l2.9}, $\limsup_{t\to\yy}u_2(t,x)\le\bar{u}_2(x)$ locally uniformly in $\overline{\mathbb{R}}^+$, where $\bar{u}_2(x)$ is a unique bounded positive solution of
 \[  d_2\dd\int_{0}^{\yy}J_2(x-y)u(y)\dy-d_2u+u(a_2+c_2\bar{u}_1-b_2 u)=0 {\rm ~ ~ ~in ~ }\;\overline{\mathbb{R}}^+.\]

{\bf Step 3}:\, It can be learned from Lemmas \ref{l2.4} and \ref{l2.7} that $\bar{u}_2\le (a_2b_1+c_2a_1)/b_1b_2$ in $\overline{\mathbb{R}}^+$ and $\lim_{x\to\yy}\bar{u}_2(x)=(a_2b_1+c_2a_1)/b_1b_2$. Together with the assumption $a_1b_1b_2>a_2b_1c_1+a_1c_1c_2$, we can find $\sigma_1>0$ such that $a_1-c_1(\bar{u}_2+\sigma_1)$ has a positive lower bound $\beta_0$ in $\overline{\mathbb{R}}^+$. Thus there exists $L>0$ such that, for any $l>L$, $\lambda_p(\mathcal{L}^{d_1}_{(0,\,l)}+\beta_0)>0$. By similar arguments in the proof of Lemma \ref{l2.6} we easily see that, for any $\sigma\in(0,\sigma_1)$, the problem
   \[d_1\dd\int_{0}^{l}J_1(x-y)u(y)\dy-d_1u+u(a_1-c_1(\bar{u}_2(x)+\sigma)-b_1u)=0 {\rm ~ ~ ~ in ~ }\;[0,l]\]
has a unique positive solution, denoted by $\underline{u}_l$.

Moreover, for any $l>L$ and $\sigma\in(0,\sigma_1)$, there exists $T>0$ such that $h(t)>l$ and $u_2(t,x)\le \bar{u}_2(x)+\sigma$ for $t\ge T$ and $0\le x\le l$. Let $\underline u$ be the unique solution of
   \bess\left\{\begin{aligned}
& \underline u_t= d_1\int_{0}^{l}J_1(x-y) \underline u(t,y)\dy-d_1 \underline u+\underline u(a_1-c_1(\bar{u}_2(x)+\sigma)-b_1\underline u), && t>T,\ 0\le x\le l,\\
&\underline u(T,x)=u(T,x),  ~ ~  0\le x\le l.
 \end{aligned}\right.\eess
It is easy to show that $\lim_{t\to\yy}\underline u(t,x)=\underline{u}_l(x)$ uniformly in $[0,l]$ as $t\to\yy$ and $\lim_{l\to\yy}\underline{u}_l=\underline{u}^\sigma_1$ locally uniformly in $\overline{\mathbb{R}}^+$, where $\underline{u}^\sigma_1$ is the unique bounded positive solution of
 \bess
 d_1\dd\int_{0}^{\yy}J_1(x-y)u(y)\dy-d_1u+u(a_1-c_1(\bar{u}_2(x)+\sigma)-b_1 u)=0 {\rm ~ ~ in ~ }\;\overline{\mathbb{R}}^+.
 \eess
In view of Remark \ref{r2.2}, $\lim_{\sigma\to 0}\underline{u}^\sigma_1=\underline{u}_1$, where $\underline{u}_1$ is the unique bounded positive solution of
 \[d_1\dd\int_{0}^{\yy}J_1(x-y)u(y)\dy-d_1u+u(a_1-c_1\bar{u}_2(x)-b_1 u)=0 {\rm ~ ~ in ~ }\;\overline{\mathbb{R}}^+.\]
By the comparison principle, $u_1\ge \underline u$ for $t>T$ and $x\in[0,l]$. Therefore, $\liminf_{t\to\yy}u_1(t,x)\ge \underline{u}_1(x)$ locally uniformly in $\overline{\mathbb{R}}^+$.

Similarly, we can also show that $\liminf_{t\to\yy}u_2(t,x)\ge \underline{u}_2(x)$ locally uniformly in $\overline{\mathbb{R}}^+$, where $\underline{u}_2$ is a unique bounded positive solution of
 \[d_2\dd\int_{0}^{\yy}J_2(x-y)u(y)\dy-d_2u+u(a_2+c_2\underline {u}_1(x)-b_2 u)=0 {\rm ~ ~ in ~ }\;\overline{\mathbb{R}}^+.\]
 The proof is complete.
\end{proof}

Next we always suppose that $J_i$ satisfy the condition {\bf(J)} and $J_i>0$ in $\mathbb{R}$ for $i=1,2$, and then give the criteria to determine whether $h_\yy=\yy$. The following conclusions can be proved by similar approaches in \cite{DWZ}, and only some minor modifications are needed. Firstly, by Lemma \ref{l5.1}, we have $h_{\yy}=\yy$ if either $a_1\ge d_1$ or $a_2\ge d_2$.

Assume that $a_i<d_i$ for $i=1,2$, we easily see that there exist unique $\ell^*_i>0$ such that $\lambda_p(\mathcal{L}^{d_i}_{(0,\,\ell^*_i)}+a_i)=0$ and $\lambda_p(\mathcal{L}^{d_i}_{(0,\,\ell^*_i)}+a_i)(l-\ell^*_i)>0$ for all $l>0$. Hence, if $h_0\ge \ell^*:=\min\{\ell^*_1,\ell^*_2\}$, we must have $h_{\yy}=\yy$. Also we obtain that  $h_{\yy}<\yy$ implies $h_{\yy}\le\ell^*$.

Suppose that $a_i<d_i$ for $i=1,2$ and $h_0< \ell^*$. It can be analogously derived from Lemma \ref{l3.6} that there exists $\mu^*>0$ such that $h_\yy=\yy$ when $\mu_1+\mu_2\ge\mu^*$; Moreover, similarly to the proofs of Lemma \ref{l3.5} and \cite[Lemma 3.6]{DWZ}, we can find  $\mu_*>0$ such that $h_\yy<\yy$ when $\mu_1+\mu_2<\mu_*$.

To sum up, we obtain the following result concerned with when $h_{\yy}<\yy$.

\begin{theorem}\label{t5.2}\, Suppose that $J_i$ satisfy the condition {\bf(J)} and $J_i>0$ in $\mathbb{R}$ for $i=1,2$. Let $(u_1,u_2,h)$ be the unique solution of \eqref{1.4}. Then the followings hold:
\begin{enumerate}\vspace{-2mm}
\item[{\rm(1)}]\, if $a_1\ge d_1$ or $a_2\ge d_2$, then $h_\yy=\yy$;\vspace{-2mm}
\item[{\rm(2)}]\, if $a_i<d_i$ for $i=1,2$, then there exists a unique $\ell^*>0$ such that $h_\yy=\yy$ when $h_0\ge\ell^*$;\vspace{-2mm}
\item[{\rm(3)}]\, if $a_i<d_i$ for $i=1,2$ and $h_0<\ell^*$, then there exists $\mu^*\ge\mu_*>0$ such that $h_\yy=\yy$ when $\mu_1+\mu_2>\mu^*$, and $h_\yy<\yy$ when $\mu_1+\mu_2\le\mu_*$.\vspace{-2mm}
    \end{enumerate}
\end{theorem}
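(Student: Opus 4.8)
The plan is to dispatch parts (1) and (2) by contradiction using the eigenvalue inequality of Lemma~\ref{l5.1}, and to treat part (3) by comparison: the large-$\mu$ case via the flux lemma (Lemma~\ref{l3.6}) and the small-$\mu$ case via a decaying coupled upper solution modelled on Lemma~\ref{l3.5} and \cite{DWZ}. Throughout I use that $l\mapsto\lambda_p(\mathcal{L}^{d_i}_{(0,l)}+a_i)$ is strictly increasing and continuous, with $\lim_{l\to0}=a_i-d_i$ and $\lim_{l\to\yy}=a_i$ (the analogue for $\mathcal{L}^{d_i}$ of Lemma~\ref{l2.3}; see \cite[Proposition~3.4]{CDLL}). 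For (1), suppose $h_\yy<\yy$. Lemma~\ref{l5.1} then forces $\lambda_p(\mathcal{L}^{d_i}_{(0,h_\yy)}+a_i)\le0$ for $i=1,2$. But if $a_i\ge d_i$ for some $i$, monotonicity gives $\lambda_p(\mathcal{L}^{d_i}_{(0,l)}+a_i)>a_i-d_i\ge0$ for every $l>0$, a contradiction; hence $h_\yy=\yy$. For (2), when $a_i<d_i$ the two limits straddle $0$, so strict monotonicity and continuity yield a unique $\ell^*_i$ with $\lambda_p(\mathcal{L}^{d_i}_{(0,\ell^*_i)}+a_i)=0$; set $\ell^*=\min\{\ell^*_1,\ell^*_2\}$ and let $i_0$ attain the minimum. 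Since $J_i>0$ and $u_i>0$ in the interior we have $h'(t)>0$, so $h_0\ge\ell^*$ implies $h_\yy>\ell^*\ge\ell^*_{i_0}$ and therefore $\lambda_p(\mathcal{L}^{d_{i_0}}_{(0,h_\yy)}+a_{i_0})>0$, again contradicting Lemma~\ref{l5.1}. Thus $h_\yy=\yy$, and the contrapositive gives the implication $h_\yy<\yy\Rightarrow h_\yy\le\ell^*$.

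For the large-$\mu$ half of (3), the global bounds of Theorem~\ref{t5.1} and the maximum principle give $u_i>0$ in the interior together with an $L^\yy$ bound, so each species satisfies $u_{it}\ge d_i\dd\int_0^{h(t)}J_i(x-y)u_i(t,y)\dy-d_iu_i+c_iu_i$ for a constant $c_i$ (absorbing the harmful interaction), while $h'(t)\ge\mu_i\dd\int_0^{h(t)}\int_{h(t)}^{\yy}J_i(x-y)u_i(t,x)\dy\dx$. Fixing a target $H>\ell^*$, Lemma~\ref{l3.6} supplies $\mu^0>0$ so that $\mu_i\ge\mu^0$ forces $\lim_{t\to\yy}h(t)\ge H$. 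Taking $\mu^*=2\mu^0$, the hypothesis $\mu_1+\mu_2>\mu^*$ makes some $\mu_i>\mu^0$, whence $h(t_1)>\ell^*$ for a finite $t_1$; restarting the problem at $t_1$ and applying part (2) yields $h_\yy=\yy$.

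The main obstacle is the small-$\mu$ vanishing claim, where the coupling must be confronted directly. Fix $h_1\in(h_0,\ell^*)$, so that $\lambda_p(\mathcal{L}^{d_i}_{(0,h_1)}+a_i)<0$ for both $i$ since $h_1<\ell^*\le\ell^*_i$. As in Lemma~\ref{l3.5}, the prey admits a decaying upper solution $\bar u_1=Ce^{\lambda_1t/2}\phi_1$ on $[0,h_1]$ with $\lambda_1=\lambda_p(\mathcal{L}^{d_1}_{(0,h_1)}+a_1)<0$, using $a_1-b_1u_1-c_1u_2\le a_1$. The genuine difficulty is the predator, whose effective linear rate $a_2+c_2u_1$ is \emph{inflated} by the prey, so no uniform-in-time comparison with a subcritical scalar rate is available at $t=0$. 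I would instead let $\bar u_2$ solve the linear problem on $[0,h_1]$ with the time-dependent coefficient $a_2+c_2\bar u_1(t,x)$; since $\bar u_1$ decays exponentially, this coefficient is an \emph{integrable-in-time} perturbation of $a_2$, and combined with $\lambda_p(\mathcal{L}^{d_2}_{(0,h_1)}+a_2)<0$ a Gronwall-type estimate forces $\bar u_2\to0$ as well. This perturbative decay argument, paralleling \cite[Lemma~3.6]{DWZ}, is the technical heart of the proof.

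With both upper solutions decaying exponentially, the accumulated boundary displacement obeys $\int_0^\yy\bar h'(t)\,\dt\le(\mu_1+\mu_2)\,M$ for a finite $M$ depending only on the barriers, so choosing $\mu_1+\mu_2\le\mu_*$ keeps $\bar h\le h_1$, and the comparison principle gives $h_\yy\le h_1<\yy$. Finally, one obtains only $\mu^*\ge\mu_*$ rather than a single sharp threshold because the pair $(u_1,u_2)$ is not monotone in $\mu_1+\mu_2$ — the prey is harmed and the predator helped by the interaction — so the scalar device combining monotonicity with continuous dependence on $\mu$ (as for $\mu^*_D$ in Section~\ref{s3}) is unavailable here.
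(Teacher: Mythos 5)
Your proposal is correct and follows essentially the same route as the paper: parts (1)--(2) from Lemma~\ref{l5.1} combined with the strict monotonicity and limits of $l\mapsto\lambda_p(\mathcal{L}^{d_i}_{(0,\,l)}+a_i)$, the large-$\mu$ case from Lemma~\ref{l3.6} applied to one species after absorbing the harmful interaction into a constant zero-order coefficient, and the small-$\mu$ case from a decaying coupled upper solution in the spirit of Lemma~\ref{l3.5} and \cite[Lemma 3.6]{DWZ}. The paper only sketches this last step, and your explicit predator barrier --- the eigenfunction barrier $C_2e^{\lambda_2 t/2}\phi_2$ corrected by the bounded factor $\exp\big(c_2\int_0^t\|\bar u_1(s,\cdot)\|_{\infty}\,{\rm d}s\big)$, which stays finite because $\bar u_1$ decays exponentially --- is a valid realization of it.
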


We now discuss the spreading speed and accelerated spreading for \eqref{1.4}. To this end, we first give a comparison principle which can be proved by similar methods in \cite[Theorem 3.1]{CDLL} and \cite[Lemma 4.1]{Wjde}. So the details are omitted here.

\begin{theorem}\label{t5.3} Let $J_i$ satisfy the condition {\bf(J)} for $i=1,2$. If for any $T>0$, $\bar{h}\in C^1([0,T])$, $\bar{u}_i,\,\bar{u}_{it}\in C(\overline{D}^T_{\bar{h}})$ and satisfy
\bess
\left\{\begin{aligned}
&\bar u_{1t}\ge d_1\dd\int_{0}^{\bar h(t)}J_1(x-y)\bar u_1(t,y)\dy-d_1\bar u_1+\bar u_1(a_1-b_1\bar{u}_1), && 0<t\le T,~0\le x<\bar{h}(t),\\
&\bar u_{2t}\ge d_2\dd\int_{0}^{\bar h(t)}J_2(x-y)\bar u_2(t,y)\dy-d_2\bar u_2+\bar u_2(a_2-b_2\bar{u}_2+c_2\bar{u}_1), && 0<t\le T,~0\le x<\bar{h}(t),\\
&u_i(t,\bar{h}(t))\ge0, &&0<t\le T,\\
&\bar h'(t)\ge\sum_{i=1}^2\dd\mu_i \int_{0}^{\bar h(t)}\!\int_{\bar h(t)}^{\infty}
J_i(x-y)\bar u_i(t,x)\dy\dx, &&0<t\le T,\\
&\bar h(0)\ge h_0,\;\;\bar u_i(0,x)\ge u_{i0}(x),&& x\in[0, h_0],
\end{aligned}\right.
 \eess
 then the solution $(u_1,u_2,h)$ of \eqref{1.4} satisfies
 \[u_1(t,x)\le\bar{u}_1(t,x), ~ u_2(t,x)\le \bar{u}_2(t,x), ~ h(t)\le\bar{h}(t) ~ ~ {\rm for} ~ t\ge0, ~ x\in[0,h(t)].\]
\end{theorem}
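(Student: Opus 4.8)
The plan is to reduce this coupled free-boundary comparison to the ordering of the free boundaries $h(t)\le\bar h(t)$ followed by two successive scalar comparisons. The structure of \eqref{1.4} is what makes this work: although the predator--prey system is not cooperative, the proposed upper solution decouples it triangularly. The supersolution inequality for $\bar u_1$ has dropped the predation term $-c_1\bar u_2$, so $\bar u_1$ is a supersolution of a logistic-type equation that does not see $u_2$; since $u_2\ge0$, the true prey satisfies $u_1(a_1-b_1u_1-c_1u_2)\le u_1(a_1-b_1u_1)$, i.e.\ $u_1$ is a subsolution of that same equation. Likewise, once $u_1\le\bar u_1$ is known, the monotonicity $c_2u_1\le c_2\bar u_1$ lets us treat $\bar u_2$ as a genuine supersolution for $u_2$. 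I would also first record that $\bar u_i\ge0$ on $\overline D^T_{\bar h}$ (a direct consequence of Lemma \ref{l2.1} applied to the supersolution itself, using $\bar u_i(0,\cdot)\ge u_{i0}\ge0$ and the boundary sign condition). Thus the logical order is: (i) order the free boundaries; (ii) prove $u_1\le\bar u_1$; (iii) prove $u_2\le\bar u_2$.

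Granting $h\le\bar h$ on $[0,T]$, step (ii) is a one-line application of the maximum principle. Setting $\psi=\bar u_1-u_1$ on $\overline D^T_h$ and using $\bar u_1\ge0$ together with $[0,h(t)]\subseteq[0,\bar h(t)]$, so that $\int_0^{\bar h(t)}J_1(x-y)\bar u_1\dy\ge\int_0^{h(t)}J_1(x-y)\bar u_1\dy$, subtraction of the two equations gives
\[\psi_t\ge d_1\int_0^{h(t)}J_1(x-y)\psi(t,y)\dy-d_1\psi+\tilde c\,\psi+g,\]
where $\tilde c=a_1-b_1(\bar u_1+u_1)\in L^\infty(D^T_h)$ and the remainder $g=d_1\int_{h(t)}^{\bar h(t)}J_1(x-y)\bar u_1\dy+c_1u_1u_2\ge0$. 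Since $\psi(t,h(t))=\bar u_1(t,h(t))\ge0$ (using $u_1(t,h(t))=0$) and $\psi(0,\cdot)\ge0$, Lemma \ref{l2.1} yields $\psi\ge0$. Step (iii) is identical in spirit: with $\phi=\bar u_2-u_2$ the reaction difference splits as $\hat c\,\phi+c_2u_2(\bar u_1-u_1)$ with $\hat c=a_2-b_2(\bar u_2+u_2)+c_2\bar u_1\in L^\infty$ and $c_2u_2(\bar u_1-u_1)\ge0$ by step (ii), so Lemma \ref{l2.1} again gives $u_2\le\bar u_2$.

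The main obstacle is step (i), ordering the free boundaries, because $h$ and $\bar h$ are coupled to the unknowns through the flux condition: to compare the integrals over $[0,h]$ and $[0,\bar h]$ one already wants the boundaries ordered, whereas the velocities are comparable only after the densities are. A first-crossing-time argument almost works: if $h\le\bar h$ failed, let $\tau\in[0,T)$ be the first time with $h(\tau)=\bar h(\tau)$ and $h<\bar h$ on $[0,\tau)$; the ordering on $[0,\tau)$ lets steps (ii)--(iii) run up to $\tau$, giving $u_i(\tau,\cdot)\le\bar u_i(\tau,\cdot)$ on $[0,h(\tau)]$, whence
\[h'(\tau)=\sum_{i=1}^2\mu_i\int_0^{h(\tau)}\!\int_{h(\tau)}^{\yy}J_i(x-y)u_i(\tau,x)\dy\dx\le\sum_{i=1}^2\mu_i\int_0^{\bar h(\tau)}\!\int_{\bar h(\tau)}^{\yy}J_i(x-y)\bar u_i(\tau,x)\dy\dx\le\bar h'(\tau).\]
This is only non-strict, so it does not by itself forbid a crossing. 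The remedy, exactly as in \cite[Theorem 3.1]{CDLL} and \cite[Lemma 4.1]{Wjde}, is a perturbation that renders the velocity comparison strict: replace each $\mu_i$ by $\mu_i(1-\delta)$ to obtain a solution $(u^\delta_1,u^\delta_2,h^\delta)$ with strictly slower boundary, so that at a putative touching time the factor $(1-\delta)$ forces $h^{\delta\prime}<\bar h'$, contradicting $h^\delta$ reaching $\bar h$ from below; hence $h^\delta\le\bar h$ on $[0,T]$, and letting $\delta\to0$ (continuous dependence on $\mu$) gives $h\le\bar h$. With the boundaries ordered, steps (ii)--(iii) close the argument on all of $[0,T]$. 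Throughout, the absence of any smoothing effect of the nonlocal operator is accommodated by working only through the $L^\infty$ maximum principle of Lemma \ref{l2.1}, and the bounds and regularity of the competing solution $(u_1,u_2,h)$ are supplied by Theorem \ref{t5.1}.
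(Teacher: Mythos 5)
Your proposal is correct and follows essentially the argument the paper itself points to: the paper omits the proof of Theorem \ref{t5.3} and refers to the methods of \cite[Theorem 3.1]{CDLL} and \cite[Lemma 4.1]{Wjde}, which are exactly your scheme of (a) a first-touching-time argument for the free boundaries made strict by replacing $\mu_i$ with $\mu_i(1-\delta)$ and then letting $\delta\to0$ via continuous dependence on $\mu$, and (b) the triangular use of the scalar maximum principle (Lemma \ref{l2.1}), first for $\bar u_1-u_1$ after discarding the nonnegative terms $d_1\int_{h(t)}^{\bar h(t)}J_1\bar u_1\,\dy$ and $c_1u_1u_2$, then for $\bar u_2-u_2$ using the already-established $c_2u_2(\bar u_1-u_1)\ge0$. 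No gaps beyond the minor implicit assumption (shared with the theorem statement itself) that $\bar u_i(0,\cdot)\ge0$ on all of $[0,\bar h(0)]$ rather than only on $[0,h_0]$.
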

\begin{theorem}\label{t5.4} Under the same assumptions with Theorem \ref{t5.2}, the accelerated spreading happens for \eqref{1.4} if one of the followings holds:

\sk{\rm(1)}\, $J_1$ violates {\bf(J1)} and $a_1b_1b_2>a_2b_1c_1+a_1c_1c_2$;

\sk{\rm(2)}\,$J_2$ does not satisfy {\bf(J1)}.
\end{theorem}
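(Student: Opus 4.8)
The plan is to reduce each case to the scalar accelerated-spreading mechanism already set up in Step 3 of Theorem \ref{t3.6} (equivalently Lemma \ref{l4.3}): truncating a kernel to $J_{i,n}(x)=\xi(x/n)J_i(x)$ produces compactly supported kernels with finite semi-wave speeds $c_{i,n}$, and these speeds satisfy $c_{i,n}\to\yy$ precisely when $J_i$ fails {\bf(J1)}. Throughout I work in the spreading regime $h_\yy=\yy$, since accelerated spreading is only meaningful there; for each fixed $n$ I remain free to choose the comparison time $T$ so large that $h(T)$ exceeds the threshold $\ell^*$ of the relevant scalar problem, whence spreading holds for the scalar comparison problems by Theorem \ref{t3.5}.

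Case (2) is the cleaner one and I would do it first. Since $c_2u_1u_2\ge0$ and $J_2\ge J_{2,n}$, the predator component satisfies
\[
u_{2t}\ge d_2\dd\int_0^{h(t)}J_{2,n}(x-y)u_2(t,y)\dy-d_2u_2+u_2(a_2-b_2u_2),
\]
while $h'(t)\ge\mu_2\dd\int_0^{h(t)}\int_{h(t)}^{\yy}J_{2,n}(x-y)u_2(t,x)\dy\dx$ because the $\mu_1$-flux is nonnegative. Thus $(u_2,h)$ is an upper solution (in the sense of the remark following Theorem \ref{t3.4}) of the scalar free boundary problem \eqref{1.2} with data $(d_2,\mu_2,J_{2,n})$ and growth $u(a_2-b_2u)$. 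Letting $(u_{2,n},h_{2,n})$ be the genuine solution of that scalar problem started from $(u_2(T,\cdot),h(T))$, the scalar comparison principle gives $h(t+T)\ge h_{2,n}(t)$. As $J_{2,n}$ has compact support it satisfies {\bf(J1)}, so Step 2 of Theorem \ref{t3.6} applies and yields $\liminf_{t\to\yy}h(t)/t\ge \frac{U_{2,n}(0)}{u^*_{2,n}}c_{2,n}$, where $u^*_{2,n}\to a_2/b_2$ and $U_{2,n}(0)\to U_2(0)>0$. Since $J_2$ fails {\bf(J1)} we have $c_{2,n}\to\yy$, and letting $n\to\yy$ forces $\lim_{t\to\yy}h(t)/t=\yy$.

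Case (1) needs the coexistence hypothesis to absorb the unfavourable term $-c_1u_2$ in the prey equation. The preliminary step is a uniform-in-$x$ upper bound for the predator. The spatially constant function $\tilde w(t)$ solving $\tilde w'=\tilde w(a_1-b_1\tilde w)$ is an upper solution of the $u_1$-equation (because $\int_0^{h(t)}J_1(x-y)\dy\le1$ and $-c_1u_2\le0$), so $u_1\le\tilde w$ and $\limsup_{t\to\yy}\sup_xu_1\le a_1/b_1$ uniformly; feeding $c_2u_1\le c_2\tilde w$ into an analogous constant upper solution for $u_2$ gives $\limsup_{t\to\yy}\sup_xu_2\le(a_2b_1+c_2a_1)/(b_1b_2)=:\bar u_{2,\yy}$. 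The assumption $a_1b_1b_2>a_2b_1c_1+a_1c_1c_2$ is exactly $\hat a:=a_1-c_1\bar u_{2,\yy}>0$. Hence for any small $\sigma>0$ there is $T$ with $u_2\le\bar u_{2,\yy}+\sigma$ on $[T,\yy)\times[0,h(t)]$, so for $t\ge T$ the prey satisfies
\[
u_{1t}\ge d_1\dd\int_0^{h(t)}J_{1,n}(x-y)u_1(t,y)\dy-d_1u_1+u_1\big(\hat a-c_1\sigma-b_1u_1\big),
\]
with $\hat a-c_1\sigma>0$ for $\sigma$ small. The argument of Case (2) now runs verbatim with data $(d_1,\mu_1,J_{1,n})$ and growth $u(\hat a-c_1\sigma-b_1u)$: one obtains $\liminf_{t\to\yy}h(t)/t\ge \frac{U_{1,n}(0)}{u^*_{1,n}}c_{1,n}$, and since $J_1$ violates {\bf(J1)} we get $c_{1,n}\to\yy$, giving $\lim_{t\to\yy}h(t)/t=\yy$.

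I expect the main obstacle to be Case (1), and within it the passage from merely local control of the predator to the genuinely uniform bound $\limsup_{t\to\yy}\sup_xu_2\le\bar u_{2,\yy}$, which is essential because the semi-wave profile lives near the moving front $x\approx h(t)\to\yy$ where only a uniform estimate keeps the effective prey growth rate $\hat a-c_1\sigma$ positive. The spatially homogeneous upper solutions sketched above are the device that overcomes this, and one must justify the comparison despite the lack of regularity using only that $\int J_i\,\dy\le1$. The remaining bookkeeping---checking the upper-solution inequalities for $(u_i,h)$ against the truncated kernel, and that spreading holds for the scalar comparison problems once $T$ is large---is routine given Theorems \ref{t3.3}, \ref{t3.5} and the speed analysis of Theorem \ref{t3.6}.
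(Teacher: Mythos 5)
Your proof is correct and follows essentially the same route as the paper: a spatially constant (ODE) upper bound gives the uniform estimate $\limsup_{t\to\yy}u_2\le(a_2b_1+a_1c_2)/(b_1b_2)$, the coexistence condition then yields a positive effective prey growth rate, and the conclusion is reduced by comparison to a scalar free boundary problem whose front accelerates when the kernel fails {\bf(J1)}. The only cosmetic difference is that the paper compares against the scalar problem with the full kernel $J_1$ and invokes Theorem \ref{t3.6} (conclusion \eqref{3.5}) as a black box (writing out only case (1) and declaring case (2) analogous), whereas you inline the kernel-truncation argument and the bound \eqref{3.9} directly at the level of the two-species system; the content is identical.
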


\begin{proof}We only prove the conclusion (1) since the conclusion (2) can be proved by analogous methods. By simple comparison arguments, one easily derive that $\limsup_{t\to\yy}u_1(t,x)\le a_1/b_1$ and $\limsup_{t\to\yy}u_2(t,x)\le (a_2b_1+a_1c_2)/b_1b_2$ uniformly in $[0,\yy)$. Due to $a_1b_1b_2>a_2b_1c_1+a_1c_1c_2$, for any small $\ep>0$ with $a_1b_1b_2>a_2b_1c_1+a_1c_1c_2+b_1b_2\ep$, we can find some $T>0$ such that
\[u_2(t,x)\le \frac{a_2b_1+a_1c_2}{b_1b_2}+\ep\; ~ ~ {\rm for } ~ t\ge T, ~ x\in\overline{\mathbb{R}}^+.\]
Then $(u_1,h)$ satisfies
 \bess
\left\{\begin{array}{lll}
u_{1t}(t,x)\ge d_1\dd\int_{0}^{h(t)}J_1(x-y)u_1(t,y)\dy-d_1u_1\\[3mm]
\hspace{18mm}+u_1\left[a_1-c_1\left(\dd\frac{a_2b_1+a_1c_2}{b_1b_2}+\ep\right)-b_1u_1\right],\;& t>T,~0\le x<h(t),\\[2mm]
u(t,h(t))=0,& t>T,\\[2mm]
h'(t)\ge \mu_1\dd\int_{0}^{h(t)}\int_{h(t)}^{\infty}
J_1(x-y)u_1(t,x)\dy\dx,& t>T,\\[3mm]
u_1(T,x)=u_1(T,x),\ \ &x\in[0,h(T)].
\end{array}\right.
 \eess
Let $(\underline{u},\underline{h})$ be the unique solution of
   \bess
\left\{\begin{array}{lll}
\underline u_{t}= d_1\dd\int_{0}^{\underline h(t)}J_1(x-y)\underline u(t,y)\dy-d_1\underline u\\[3mm]
\hspace{18mm}+\underline u\left[a_1-c_1\left(\dd\frac{a_2b_1+a_1c_2}{b_1b_2}+\ep\right)-b_1\underline u\right],\;
&t>0,~0\le x<\underline h(t),\\[2mm]
\underline u(t,\underline h(t))=0,& t>0,\\[2mm]
\underline h'(t)=\mu_1\dd\int_{0}^{\underline h(t)}\int_{\underline h(t)}^{\infty}
J_1(x-y)\underline u(t,x)\dy\dx,& t>0,\\[3mm]
\underline h(0)=h(T_1),\ \underline u(0,x)=u_1(T_1,x), &x\in[0,h(T_1)].
\end{array}\right.
 \eess
Since $h_\yy=\yy$, from Theorem \ref{t3.5} we may choose $T_1>T$ large enough such that $\underline{h}(\yy)=\yy$. By comparison principle, we have that $\underline{h}(t)\le h(t+T_1)$. Noting that $J_1$ violates {\bf(J1)}, owing to Theorem \ref{t3.6}, $\lim_{t\to\yy}\frac{\underline{h}(t)}{t}=\yy$. So $\lim_{t\to\yy}\frac{h(t)}{t}=\yy$.
\end{proof}

Next we show that accelerated spreading cannot occur if $J_1$ and $J_2$ satisfy {\bf(J1)}.

\begin{theorem}\label{t5.5} Let $J_i$ satisfy the condition {\bf(J1)} for $i=1,2$. If $h_\yy=\yy$, then
 \bes
 \limsup_{t\to\yy}\frac{h(t)}{t}\le c_1+c_2,\lbl{5.a}\ees
where $c_i$ for $i=1,2$ are determined by the following two semi-wave problems respectively,
\bess\left\{\begin{array}{lll}
 d_1\dd\int_{-\yy}^{0}J_1(x-y)\phi_1(y)\dy-d_1\phi_1+c\phi'_1+\phi_1(a_1-b_1\phi_1)=0, \ \ -\yy<x<0,\\[3mm]
\phi_1(-\yy)=\dd\frac{a_1}{b_1},\ \ \phi_1(0)=0, \ \ c_1=\mu_1\dd\int_{-\yy}^{0}\int_{0}^{\yy}J_1(x-y)\phi_1(x)\dy\dx,
 \end{array}\right.
 \eess
and
\bess\left\{\begin{array}{lll}
 d_2\dd\int_{-\yy}^{0}J_2(x-y)\phi_2(y)\dy-d_2\phi_2+c\phi'_2
 +\phi_2\left(a_2+c_2\frac{a_1}{b_1}-b_2\phi_2\right)=0, \ \ -\yy<x<0,\\[3mm]
\phi_2(-\yy)=\dd\frac{a_2b_1+a_1c_2}{b_1b_2},\ \ \phi_2(0)=0, \ \ c_2=\mu_2\dd\int_{-\yy}^{0}\int_{0}^{\yy}J_2(x-y)\phi_2(x)\dy\dx.
 \end{array}\right.
 \eess
 \end{theorem}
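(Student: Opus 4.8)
The plan is to build a single moving barrier that travels at the combined speed and to stack on it two travelling profiles, one per species, so that each contributes its own semi-wave flux to the free boundary condition. Throughout I will use the a priori bounds $\limsup_{t\to\yy}u_1\le a_1/b_1$ and $\limsup_{t\to\yy}u_2\le(a_2b_1+a_1c_2)/(b_1b_2)$ established by simple comparison at the start of the proof of Theorem \ref{t5.4}, together with the comparison principle of Theorem \ref{t5.3}. First I would fix small $\ep>0$ and consider the auxiliary predator semi-wave problem in which the prey is frozen at the slightly enlarged value $(1+\ep)a_1/b_1$, i.e. with reaction $\phi(a_2+c_2(1+\ep)a_1/b_1-b_2\phi)$; arguing as in Step 2(1) of Theorem \ref{t3.6} (via Proposition \ref{p3.1}), this admits a unique speed $c_2^\ep>0$ and a nonincreasing profile $\phi_2^\ep$, while $\phi_1$ denotes the profile of the first semi-wave problem at speed $c_1$. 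Writing $\kappa_1:=c_1$, $\kappa_2:=c_2^\ep$, I then set
\[\bar h(t)=(1+\ep)(c_1+c_2^\ep)t+L,\quad \bar u_1(t,x)=(1+\ep)\phi_1(x-\bar h(t)),\quad \bar u_2(t,x)=(1+\ep)\phi_2^\ep(x-\bar h(t)),\]
with $L\gg1$ to be chosen. The regularity required by Theorem \ref{t5.3} is clear since $\phi_1,\phi_2^\ep\in C^1$ and $\bar h$ is affine.

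The core of the argument is verifying that $(\bar u_1,\bar u_2,\bar h)$ is an upper solution. For the interior inequalities I would use that $\bar h'=(1+\ep)(c_1+c_2^\ep)$ exceeds each individual speed, so that $-(1+\ep)^2(c_1+c_2^\ep)\phi_i'\ge-(1+\ep)\kappa_i\phi_i'$ (recall $\phi_i'\le0$); substituting the semi-wave equation for $-\kappa_i\phi_i'$ and enlarging the convolution domain from $(-\yy,\bar h)$ to $(0,\bar h)$ converts each into the required reaction--diffusion inequality. For $\bar u_1$ the reaction comparison $(1+\ep)g_1(\phi_1)\ge g_1((1+\ep)\phi_1)$, with $g_1(u)=u(a_1-b_1u)$, follows from the fact that $g_1(u)/u$ is decreasing. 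The coupling term is where the $\ep$-enlargement pays off: since $\bar u_1=(1+\ep)\phi_1\le(1+\ep)a_1/b_1$, a direct computation gives
\[(1+\ep)\phi_2^\ep\big[c_2(1+\ep)a_1/b_1+b_2\ep\phi_2^\ep-c_2\bar u_1\big]\ge0,\]
which is exactly the statement that the predator reaction I used dominates $\bar u_2(a_2-b_2\bar u_2+c_2\bar u_1)$ pointwise, so $\bar u_2$ satisfies the inequality Theorem \ref{t5.3} demands.

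For the free boundary inequality the change of variables $x\mapsto x-\bar h(t)$ gives, for each $i$,
\[\mu_i\int_0^{\bar h(t)}\!\!\int_{\bar h(t)}^{\yy}J_i(x-y)\bar u_i\,\dy\dx=(1+\ep)\mu_i\int_{-\bar h(t)}^0\!\!\int_0^{\yy}J_i(x-y)\phi_i(x)\,\dy\dx\le(1+\ep)\kappa_i,\]
the inequality coming from integrating a nonnegative kernel over a larger set and the definition of $\kappa_i$; summing the two contributions yields exactly $\bar h'(t)$. Since $\phi_1(-\yy)=a_1/b_1$ and $\phi_2^\ep(-\yy)=(a_2+c_2(1+\ep)a_1/b_1)/b_2$ strictly exceed the a priori $\limsup$ bounds, I can first pick $T$ large and then $L\gg1$ so that $\bar h(0)=L>h(T)$ and $\bar u_i(0,\cdot)\ge u_i(T,\cdot)$ on $[0,h(T)]$ (here the prefactor $(1+\ep)$ is essential, as $u_i(T,\cdot)$ may slightly exceed the carrying capacities). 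Applying Theorem \ref{t5.3} to the solution shifted in time by $T$ then gives $h(t+T)\le\bar h(t)$, whence $\limsup_{t\to\yy}h(t)/t\le(1+\ep)(c_1+c_2^\ep)$.

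It remains to let $\ep\to0$, and the main obstacle is precisely the continuous dependence $c_2^\ep\to c_2$ of the predator semi-wave speed under the perturbation of its reaction coefficient from $a_2+c_2(1+\ep)a_1/b_1$ to $a_2+c_2a_1/b_1$. I would establish this exactly as in Step 2(2) of Theorem \ref{t3.6}: monotonicity of $c_2^\ep$ in $\ep$ (via \cite[Lemma 2.8]{DN21}) yields a limit $c_2^0\le c_2$, uniform gradient bounds give a locally uniform limit profile $\phi_2^0$, and the dominated and monotone convergence theorems identify $(c_2^0,\phi_2^0)$ with the solution pair of the semi-wave problem defining $c_2$, forcing $c_2^0=c_2$. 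Passing to the limit then yields $\limsup_{t\to\yy}h(t)/t\le c_1+c_2$, which is \eqref{5.a}.
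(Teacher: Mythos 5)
Your proposal is correct and follows the paper's strategy almost verbatim: the same affine barrier $\bar h(t)=(1+\ep)(c_1+c_2)t+L$ (with $c_2$ replaced by your $c_2^\ep$), the same stacked profiles $(1+\ep)\phi_i(x-\bar h(t))$, the same flux computation giving $\bar h'(t)$, and the same application of Theorem \ref{t5.3} after fixing $T$ and $L$ from the a priori bounds $\limsup_{t\to\yy}u_1\le a_1/b_1$ and $\limsup_{t\to\yy}u_2\le(a_2b_1+a_1c_2)/(b_1b_2)$. The one genuine difference is that you freeze the prey at the enlarged level $(1+\ep)a_1/b_1$ in the predator semi-wave, producing a perturbed pair $(c_2^\ep,\phi_2^\ep)$, whereas the paper works directly with the unperturbed pair $(c_2,\phi_2)$ from the statement. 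This buys you something concrete: with the paper's choice, the predator reaction comparison $(1+\ep)\phi_2\big(a_2+c_2\phi_1-b_2\phi_2\big)\ge\bar u_2\big(a_2-b_2\bar u_2+c_2\bar u_1\big)$ reduces, after dividing by $(1+\ep)\phi_2$, to the pointwise inequality $b_2\phi_2\ge c_2\phi_1$ on $(-\yy,0)$, which the paper's displayed chain uses implicitly but never verifies (it holds at $z=-\yy$ and trivially at $z=0$, but not obviously in between); your $\ep$-enlargement turns the corresponding bracket into $c_2(1+\ep)a_1/b_1+\ep b_2\phi_2^\ep-c_2\bar u_1\ge\ep b_2\phi_2^\ep\ge0$, which is immediate from $\bar u_1\le(1+\ep)a_1/b_1$. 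The price is the extra continuity statement $c_2^\ep\to c_2$ as $\ep\to0$, but the monotonicity-plus-compactness argument you invoke is exactly the tool the paper already deploys for the analogous perturbations in Step 2(2) of the proof of Theorem \ref{t3.6} and Step 4 of the proof of Lemma \ref{l4.1} ($c_0^\delta\to c_0$), so this step is safe and well supported.
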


\begin{proof}Denote the unique solution pairs of the above two problems by $(c_1,\phi_1)$ and $(c_2,\phi_2)$ respectively. For small $\ep>0$ and $L>0$ to be determined later, we define
 \[\bar{h}(t)=(1+\ep)(c_1+c_2)t+L, ~ \bar{u}_1=(1+\ep)\phi_1(x-\bar{h}(t)), ~ \bar{u}_2=(1+\ep)\phi_2(x-\bar{h}(t)).\]
We show that there exists $T>0$ such that $(\bar{u}_1,\bar{u}_2,\bar{h})$ satisfies
  \bes\label{5.1}
\left\{\begin{aligned}
&\bar u_{1t}\ge d_1\dd\int_{0}^{\bar h(t)}J_1(x-y)\bar u_1(t,y)\dy-d_1\bar u_1+\bar u_1(a_1-b_1\bar{u}_1), && t>0,~0\le x<\bar{h}(t),\\
&\bar u_{2t}\ge d_2\dd\int_{0}^{\bar h(t)}J_2(x-y)\bar u_2(t,y)\dy-d_2\bar u_2+\bar u_2(a_2-b_2\bar{u}_2+c_2\bar{u}_1), && t>0,~0\le x<\bar{h}(t),\\
&u_i(t,\bar{h}(t))\ge0, &&t>0,\\
&\bar h'(t)\ge\sum_{i=1}^2\dd\mu_i \int_{0}^{\bar h(t)}\!\int_{\bar h(t)}^{\infty}
J_i(x-y)\bar u_i(t,x)\dy\dx, &&t>0,\\
&\bar h(0)\ge h(T),\;\; \bar u_i(0,x)\ge u_{i}(T,x),&& x\in[0, h(T)].
 \end{aligned}\right.
 \ees
Firstly, there is $T>0$ such that
 \[u_1(t,x)\le (1+\frac{\ep}{2})\frac{a_1}{b_1}, ~\; u_2(t,x)\le (1+\frac{\ep}{2})\frac{a_2b_1+a_1c_2}{b_1b_2} ~ ~ {\rm for} ~ t\ge T, ~ x\in\overline{\mathbb{R}}^+.\]
We choose $L>0$ large sufficiently so that
 \[\bar u_1(0,x)=(1+\ep)\phi_1(x-L)\ge(1+\frac{\ep}{2})\frac{a_1}{b_1}\ge u_1(T,x)\;\;\;\mbox{in}\;\;[0,h(T)],\]
and
 \[\bar u_2(0,x)=(1+\ep)\phi_2(x-L)\ge (1+\frac{\ep}{2})\frac{a_2b_1+a_1c_2}{b_1b_2}\ge u_2(T,x)\;\;\;\mbox{in}\;\;[0,h(T)].\]
Moreover, by denoting $z:=z(t,x)=x-\bar{h}(t)$, we have
 \bess
 \bar{u}_{1t}&=&-(1+\ep)^2(c_1+c_2)\phi'_1(z)\ge -(1+\ep)c_1\phi'_1(z)\\
 &=&(1+\ep)\left(d_1\dd\int_{-\yy}^{\bar{h}(t)}J_1(x-y)\phi_1(z(t,y))\dy
 -d_1\phi_1(z)+\phi_1(z)(a_1-b_1\phi_1(z))\right)\\
 &\ge&d_1\dd\int_{0}^{\bar h(t)}J_1(x-y)\bar u_1(t,y)\dy-d_1\bar u_1+\bar u_1(a_1-b_1\bar{u}_1),
 \eess
 and
 \bess
 \bar{u}_{2t}&=&-(1+\ep)^2(c_1+c_2)\phi'_2(z)\ge -(1+\ep)c_2\phi'_2(z)\\
 &\ge&(1+\ep)\left(d_2\dd\int_{-\yy}^{\bar{h}(t)}\!\!J_2(x-y)\phi_2(z(t,y))\dy-d_2\phi_2(z)
 +\phi_2(z)(a_2+c_2\phi_1(z)-b_2\phi_2(z))\right)\\
 &\ge&d_2\dd\int_{0}^{\bar h(t)}J_2(x-y)\bar u_2(t,y)\dy-d_2\bar u_2+\bar u_2(a_2-b_2\bar{u}_2+c_2\bar{u}_1).
 \eess
 Clearly it remains to show fourth inequality in \eqref{5.1}. In fact,
 \bess
 \sum_{i=1}^2\mu_i \int_{0}^{\bar h(t)}\!\!\int_{\bar h(t)}^{\infty}\!
J_i(x-y)\bar u_i(t,x)\dy\dx&=&(1+\ep)\sum_{i=1}^2\mu_i \int_{0}^{\bar h(t)}\!\!\int_{\bar h(t)}^{\infty}\!J_i(x-y)\phi_i(x-\bar{h}(t))\dy\dx\\
&=&(1+\ep)\sum_{i=1}^2\mu_i \int_{-\bar{h}(t)}^{0}\int_{0}^{\infty}\!
J_i(x-y)\phi_i(x)\dy\dx\\
&\le&(1+\ep)\sum_{i=1}^2\mu_i \int_{-\yy}^{0}\int_{0}^{\infty}\!
J_i(x-y)\phi_i(x)\dy\dx\\
&=&(1+\ep)(c_1+c_2)=\bar{h}'(t).
 \eess
Therefore, \eqref{5.1} holds. By Theorem \ref{t5.3}, $\bar{h}(t)\ge h(t+T)$. Hence, $\limsup_{t\to\yy}\frac{h(t)}{t}\le \lim_{t\to\yy}\frac{\bar{h}(t)}{t}=(1+\ep)(c_1+c_2)$. The arbitrariness of $\ep$ implies the limit \eqref{5.a}.
  \end{proof}

\section{Discussion}

In this paper, we study three nonlocal diffusion models with a free boundary in one dimension space. Compared with the existing related works \cite{CDLL,DWZ,DLZ,DN21}, our models only have a free boundary, and the other one is fixed. More precisely, we assume that species can only expand their habitat through one side. On the other side, we suppose that once species cross the boundary, they will die promptly (models \eqref{1.2} and \eqref{1.4}), or that they can not jump through the boundary (model \eqref{1.3}).

For models \eqref{1.2} and \eqref{1.3}, we first establish the well-posedness and spreading-vanishing dichotomy, and then some criteria governing spreading and vanishing are given. In contrast to \cite{CDLL}, the solution component $u(t,x)$ of \eqref{1.2} will converge to a bounded positive function $U(x)$ as $t\to\yy$ locally uniformly in $[0,\yy)$ if $h_\yy=\yy$. Also we find that if $f'(0)\ge d/2$, then spreading happens for \eqref{1.3} which is different from the criteria of model \eqref{1.2} and that in \cite{CDLL}.

Moreover, by a approximate method we prove that \eqref{1.3} has a finite spreading speed if and only if {\bf(J1)} holds, which is analogous to that in \cite{DLZ}. However, when {\bf(J1)} is satisfied by $J$, only lower and upper bounds for spreading speed are obtained for model \eqref{1.2}. For the exact spreading speed, we leave it as a future work. Accelerated spreading occurs for \eqref{1.2} if {\bf(J1)} is violated.

At last, we extend partial conclusions in \cite{Wjde} to the nonlocal diffusion version \eqref{1.4} by considering a steady state problem on half space and a technical result. Besides, we prove that the accelerated spreading will happen if

\sk{\rm(1)}\, the diffusion kernel function for prey violates {\bf(J1)} and the weakly hunting condition holds, i.e. $a_1b_1b_2>a_2b_1c_1+a_1c_1c_2$, or

\sk{\rm(2)}\, the kernel function of predator does not satisfy {\bf(J1)}.

If kernel functions for prey and predator both satisfy {\bf(J1)}, then we obtain that accelerated spreading cannot happen. These results indicate that spreading speed of \eqref{1.4} is influenced by the kernel functions of two species, which is very different from local diffusion version in \cite{Wjde}.

For the classical Lotka-Volterra competition model with nonlocal diffusions and a free boundary, we expect that the parallel conclusions hold.

\end{document}